\theoremstyle{plain}
\newtheorem{theorem}{Theorem}[section]
\newtheorem{lemma}{Lemma}[section]
\newtheorem{proposition}{Proposition}[section]
\newtheorem{corollary}{Corollary}[section]
\theoremstyle{remark}
\newtheorem{remark}{Remark}[section]
\newtheorem{example}{Example}[section]
\numberwithin{equation}{section}
\newcommand{\C}{\mathbb{C}}
\newcommand{\D}{\mathbb{D}}
\newcommand{\N}{\mathbb{N}}
\newcommand{\T}{\mathbb{T}}
\newcommand{\Z}{\mathbb{Z}}
\newcommand{\p}{\mathbf{p}}
\newcommand{\mcB}{\mathcal{B}}
\newcommand{\mcF}{\mathcal{F}}
\newcommand{\barp}{\overline{p}}
\DeclareMathOperator{\cspn}{\overline{\mathrm{sp}}}
\title[\null]{Explicit formulas for the inverses of Toeplitz matrices, with applications}
\author[\null]{AKIHIKO INOUE}
\address{A.\ Inoue\\
Department of Mathematics\\
Hiroshima University\\
Higashi-Hiroshima 739-8526\\
Japan}
\email{inoue100@hiroshima-u.ac.jp}
\begin{document}

\subjclass[2010]{Primary 60G10; secondary 65F05, 60G25, 15B05.}

\keywords{Toeplitz matrix, Toeplitz system, explicit formula, closed-form formula, 
linear-time algorithm.}


\begin{abstract}
We derive novel explicit formulas for the inverses of truncated block Toeplitz matrices 
that correspond to a multivariate minimal stationary process. 
The main ingredients of the formulas are the Fourier coefficients of the phase function 
attached to the spectral density of the process. 
The derivation of the formulas is based on a recently developed 
finite prediction theory applied to 
the dual process of the stationary process. 
We illustrate the usefulness of the formulas by two applications. 
The first one is a strong convergence result for solutions of 
general block Toeplitz systems for a multivariate short-memory process. 
The second application is closed-form formulas for 
the inverses of truncated block Toeplitz 
matrices corresponding to a multivariate ARMA process. 
The significance of the latter is that they provide us 
with a linear-time algorithm 
to compute the solutions of corresponding block Toeplitz systems.
\end{abstract}

\maketitle


\section{Introduction}\label{sec:1}

Let $\T:=\{z\in\C :\vert z\vert=1\}$ be the unit circle in $\C$. 
We write $\sigma$ for the normalized Lebesgue measure $d\theta/(2\pi)$ on
$([-\pi,\pi), \mcB([-\pi,\pi)))$, where $\mcB([-\pi,\pi))$ is the Borel $\sigma$-algebra
on $[-\pi,\pi)$; thus we have $\sigma([-\pi,\pi))=1$.
For $p\in [1,\infty)$, we write $L_p(\T)$ for the Lebesgue space of measurable functions $f:\T\to\C$
such that $\Vert f\Vert_p<\infty$, where 
$\Vert f\Vert_p:=\{\int_{-\pi}^{\pi}\vert f(e^{i\theta})\vert^p \sigma(d\theta)\}^{1/p}$. 
Let $L_p^{m\times n}(\T)$ be the space of $\C^{m\times n}$-valued functions on 
$\T$ whose entries belong to $L_p(\T)$.

Let $d\in\N$. For $n\in\N$, we consider the block Toeplitz matrix
\[
T_n(w)
:=\left(
\begin{matrix}
\gamma(0)   & \gamma(-1)  & \cdots & \gamma(-n+1)\cr
\gamma(1)   & \gamma(0)   & \cdots & \gamma(-n+2)\cr
\vdots      & \vdots      & \ddots & \vdots      \cr
\gamma(n-1) & \gamma(n-2) & \cdots & \gamma(0)
\end{matrix}
\right)
\in \C^{dn\times dn},
\]
where
\begin{equation}
\gamma(k):=\int_{-\pi}^{\pi}e^{-ik\theta}w(e^{i\theta})\frac{d\theta}{2\pi} \in \C^{d\times d}, 
\qquad k\in\Z,
\label{eq:gamma123}
\end{equation}
and the symbol $w$ satisfies the following two conditions:
\begin{gather}
\mbox{$w\in L^{d\times d}_1(\T)$ and $w(e^{i\theta})$ is a positive Hermitian matrix $\sigma$-a.e.,}
\label{eq:A}\\
w^{-1}\in L^{d\times d}_1(\T).
\label{eq:M}
\end{gather}
Let $\{X_k:k\in\Z\}$ be a $\C^d$-valued, centered, weakly stationary process 
that has spectral density $w$, hence autocovariance function $\gamma$. 
Then the conditions (\ref{eq:A}) and (\ref{eq:M}) imply that $\{X_k\}$ is {\it minimal}\/ 
(see Section 10 of \cite[Chapter II]{Roz}).

In this paper, 
we show novel explicit formulas for $T_n(w)^{-1}$ 
(Theorem \ref{thm:TforXwithM123}), which are especially useful for large $n$ (see \cite{BS99}). 
The formulas are new even for $d=1$. 
The main ingredients of the formulas are the 
Fourier coefficients of $h^*h_{\sharp}^{-1}=h^{-1}h_{\sharp}^*$, where 
$h$ and $h_{\sharp}$ are $\C^{d\times d}$-valued outer functions on
$\T$ such that
\begin{equation}
w(e^{i\theta}) 
= h(e^{i\theta}) h(e^{i\theta})^*
= h_{\sharp}(e^{i\theta})^*h_{\sharp}(e^{i\theta}), \qquad
\qquad \mbox{$\sigma$-a.e.}
\label{eq:decomp888}
\end{equation}
(see \cite{HL}; see also Section \ref{sec:2}). 
We note that the unitary matrix valued function $h^*h_{\sharp}^{-1}=h^{-1}h_{\sharp}^*$ on $\T$ 
attached to $w$ 
is called the {\it phase function\/} of $w$ 
(see page 428 in \cite{Pe}).

Let $\{X_k\}$ be as above, and let $\{X^{\prime}_k: k\in\Z\}$ be the 
dual process of $\{X_k\}$ (see \cite{M60}; see also Section \ref{sec:2} below). 
In the proof of the above explicit formulas for $T_n(w)^{-1}$, 
the dual process $\{X^{\prime}_k\}$ plays an important role. 
In fact, the key to the proof of the explicit formulas for $T_n(w)^{-1}$ 
is the following equality (Theorem \ref{thm:Tdual123}):
\begin{equation}
\left(T_n(w)^{-1}\right)^{s,t} = \langle X^{\prime}_s, P_{[1,n]}X^{\prime}_t\rangle,
\qquad s, t \in \{1,\dots,n\}.
\label{eq:Tdual123}
\end{equation}
Here, $\langle \cdot, \cdot \rangle$ stands for the Gram matrix 
(see Section \ref{sec:3}) and 
$P_{[1,n]}X^{\prime}_t$ denotes the best 
linear predictor of $X^{\prime}_t$ based on the observations $X_{1},\dots,X_{n}$ 
(see Section \ref{sec:2} for the precise definition). 
Moreover, for $n\in\N$, $A \in \C^{dn \times dn}$ and $s, t\in\{1,\dots,n\}$, 
we write $A^{s,t}\in \C^{d\times d}$ for the $(s,t)$ block of $A$; thus 
$A = (A^{s,t})_{1\le s, t\le n}$. 
The equality (\ref{eq:Tdual123}) enables us to apply the $P_{[1,n]}$-related methods 
developed in \cite{I00}, \cite{I08}, \cite{IK06}, \cite{IKP1}, \cite{IKP2} and others to derive 
the explicit formulas for $T_n(w)^{-1}$.

We illustrate the usefulness of the explicit formulas for $T_n(w)^{-1}$ by two applications. 
The first one is a strong convergence result for solutions of block Toeplitz systems. 
For this application, we assume (\ref{eq:A}) as well as the following condition:
\begin{equation}
\mbox{$\sum_{k=-\infty}^{\infty} \Vert \gamma(k)\Vert <\infty$ and 
$\min_{z\in\T}\det w(z)>0$.}
\label{eq:S}
\end{equation}
Here, for $a\in\C^{d\times d}$, $\Vert a\Vert$ denotes the 
operator norm of $a$. 
The condition (\ref{eq:S}) implies that $\{X_k\}$ with spectral density $w$ is a 
{\it short-memory}\/ process. 
We note that (\ref{eq:M}) follows from (\ref{eq:A}) and (\ref{eq:S}) 
(see Section \ref{sec:4}). 
Under (\ref{eq:A}) and (\ref{eq:S}), for $n\in\N$ and a $\C^{d\times d}$-valued sequence 
$\{y_k\}_{k=1}^{\infty}$ such that $\sum_{k=1}^{\infty} \Vert y_k\Vert < \infty$, 
let
\begin{equation}
Z_n=(z_{n,1}^{\top},\dots,z_{n,n}^{\top})^{\top}\in \C^{dn\times d}\ \ \mbox{with}\ \ 
z_{n,k}\in\C^{d\times d},\ k\in\{1,\dots,n\},
\label{eq:Zz-314}
\end{equation}
be the solution to the block Toeplitz system
\begin{equation}
T_n(w)Z_n = Y_n,
\label{eq:TS183}
\end{equation}
where
\begin{equation}
Y_n := (y_1^{\top},\dots,y_n^{\top})^{\top}\in\C^{dn\times d}.
\label{eq:Rr-314}
\end{equation}
Also, let
\begin{equation}
Z_{\infty}=(z_{1}^{\top},z_{2}^{\top},\dots)^{\top}\ \ \mbox{with}\ \ 
z_{k}\in\C^{d\times d},\ k \in \N,
\label{eq:Zzinfty-314}
\end{equation}
be the solution to the corresponding infinite block Toeplitz system
\begin{equation}
T_{\infty}(w)Z_{\infty} = Y_{\infty},
\label{eq:TSinfty183}
\end{equation}
where
\begin{equation}
T_{\infty}(w) := 
\left(
\begin{matrix}
\gamma(0)  &  \gamma(-1)  &  \gamma(-2)  &   \cdots   \cr
\gamma(1)  &  \gamma(0)   &  \gamma(-1)  &   \cdots   \cr
\gamma(2)  &  \gamma(1)   &  \gamma(0)   &   \cdots   \cr
\vdots     &  \vdots      &  \vdots      &   \ddots   
\end{matrix}
\right)
\label{eq:Tinfty123}
\end{equation}
and
\begin{equation}
Y_{\infty} := (y_1^{\top},y_2^{\top},\dots)^{\top}.
\label{eq:Rrinfty-314}
\end{equation}
Then, our result (Theorem \ref{thm:Bax-conv123}) reads as follows:
\begin{equation}
\lim_{n\to\infty} \sum_{k=1}^n \Vert z_{n,k} - z_k\Vert = 0.
\label{eq:conv-z-234}
\end{equation}

We explain the background of the result (\ref{eq:conv-z-234}). 
As above, let $\{X_k:k\in\Z\}$ be a $\C^d$-valued, centered, weakly stationary process 
that has spectral density $w$. 
For $n\in\N$, the finite and infinite predictor coefficients $\phi_{n,k}\in\C^{d\times d}$, 
$k\in\{1,\dots,n\}$, and $\phi_k$, $k\in\N$, of $\{X_k\}$ are defined by
\[
P_{[1,n]} X_{n+1} = \sum_{k=1}^n \phi_{n,k} X_{n+1-k}
\qquad \mbox{and} \qquad
P_{(-\infty,n]} X_{n+1} = \sum_{k=1}^{\infty} \phi_{k} X_{n+1-k},
\]
respectively; see Section \ref{sec:3} for the precise definitions of 
$P_{[1,n]}$ and $P_{(-\infty,n]}$. 
We note that $\sum_{k=1}^{\infty} \Vert \phi_k\Vert < \infty$ holds under (\ref{eq:A}) and (\ref{eq:S}) 
(see Section \ref{sec:4} below and (2.16) in \cite{IKP2}). 
{\it Baxter's inequality}\/ in \cite{Bax}, \cite{CP}, \cite{HD} states that, 
under (\ref{eq:A}) and (\ref{eq:S}), 
there exists $K\in (0,\infty)$ such that
\begin{equation}
\sum_{k=1}^{n}\Vert \phi_{n,k} - \phi_k \Vert
\le K\sum_{k=n+1}^{\infty}\Vert \phi_k\Vert,\qquad
n\in\N.
\label{eq:Baxter597}
\end{equation}
In particular, we have
\begin{equation}
\lim_{n\to\infty} \sum_{k=1}^n \Vert \phi_{n,k} - \phi_k\Vert = 0.
\label{eq:pred-234}
\end{equation}
If we put $\tilde{w}(e^{i\theta}):=w(e^{-i\theta})$, 
then, $(\phi_{n,1},\dots,\phi_{n,n})$ is the solution to the block Toeplitz system
\[
T_n(\tilde{w})(\phi_{n,1},\dots,\phi_{n,n})^* = (\gamma(1),\dots,\gamma(n))^*,
\]
called the {\it Yule--Walker equation}, while $(\phi_{1},\phi_{2}, \dots)$ 
is the solution to the corresponding 
infinite block Toeplitz system
\[
T_{\infty}(\tilde{w})(\phi_{1},\phi_{2},\dots)^* = (\gamma(1),\gamma(2),\dots)^*.
\]
Clearly, $\tilde{w}$ satisfies (\ref{eq:A}) and (\ref{eq:S}) since so does $w$. 
Therefore, our result (\ref{eq:conv-z-234}) can be viewed as an extension to (\ref{eq:pred-234}). 
It should be noted, however, that we prove (\ref{eq:conv-z-234}) directly, 
without proving an analogue of Baxter's inequality (\ref{eq:Baxter597}).

The convergence result (\ref{eq:pred-234}) has various applications in time series analysis, such as 
the autoregressive sieve bootstrap (see, e.g., \cite{IKP2} and the references therein), whille 
Toeplitz systems of the form (\ref{eq:TS183}) appear in various fields, such as filtering of signals. 
Therefore the extension (\ref{eq:conv-z-234}), as well as the other results explained below, 
may potentially be useful in such fields. 
We note that Baxter's inequality (\ref{eq:Baxter597}), hence (\ref{eq:pred-234}), is also proved for 
univariate and multivariate FARIMA (fractional autoregressive 
integrated moving-average) processes, which are 
long-memory processes, in \cite{IK06} and \cite{IKP2}, respectively. 
The FARIMA processes have singular spectral densities $w$ but 
our explicit formulas for $T_n(w)^{-1}$ above also cover them since we only assume 
minimality in the formulas. Applications of 
the explicit formulas to univariate and multivariate FARIMA processes will be discussed 
elsewhere. However, 
the problem of proving results of the type (\ref{eq:conv-z-234}) for 
FARIMA processes remains unsolved so far.

The second application of the explicit formulas for $T_n(w)^{-1}$ is 
closed-form formulas for $T_n(w)^{-1}$ with rational $w$ that 
corresponds to a univariate ($d=1$) or multivariate ($d\ge 2$) ARMA 
(autoregressive moving-average) process 
(Theorem \ref{thm:TforR123}). 
More precisely, we assume that 
$w$ is of the form
\begin{equation}
w(e^{i\theta})=h(e^{i\theta})h(e^{i\theta})^*,\qquad \theta\in [-\pi,\pi),
\label{eq:farima529}
\end{equation}
where $h:\T\to \C^{d\times d}$ satisfies the following condition:
\begin{equation}
\begin{aligned}
&\mbox{the entries of $h(z)$ are rational functions in $z$ that have}\\
&\mbox{no poles in $\overline{\D}$, and $\det h(z)$ has no zeros in $\overline{\D}$.}
\end{aligned}
\label{eq:C}
\end{equation}
Here $\overline{\D}:=\{z\in\C :\vert z\vert\le 1\}$ is the closed unit disk in $\C$. 
The closed-form formulas for $T_n(w)^{-1}$ consist of several building block matrices 
that are of fixed sizes independent of $n$. 
The significance of the formulas for $T_n(w)^{-1}$ is that they provide us with 
a linear-time, or $O(n)$, algorithm to compute the solution 
$Z\in \C^{dn\times d}$ to the block Toeplitz system
\begin{equation}
T_n(w)Z = Y
\label{eq:TS234}
\end{equation}
for $Y\in\C^{dn\times d}$ (see Section \ref{sec:6}). 
The famous Durbin--Levinson algorithm solves the equation (\ref{eq:TS234}) for 
more general $w$ in $O(n^2)$ time. 
Algorithms for Toeplitz linear systems that run faster than $O(n^2)$ 
are called {\it superfast}. While our algorithm is restricted to the class of $w$ 
corresponding to ARMA processes, the class is important in applications, 
and the linear-time algorithm is ideally superfast in the sense that there is no 
algorithm faster than $O(n)$.

Toeplitz matrices appear in a variety of fields, 
including operator theory, orthogonal polynomials on the unit circle, 
time series analysis, engineering, and physics. 
Therefore, there is a vast amount of literature on Toeplitz matrices. 
Here, we refer to \cite{BS99}, \cite{BS06}, 
\cite{GF}, \cite{GS}, \cite{Si05a}, \cite{Si05b} and \cite{Si11} 
for a textbook treatment. For example, in \cite[III]{GF}, 
the Gohberg-Semencul formulas in \cite{GSe}, which express 
the inverse of a Toeplitz matrix 
as a difference of products of lower and upper triangular Toeplitz 
matrices, are explained.

After this work was completed, the author learned of \cite{SRY} 
by Subba Rao and Yang, where they also provide an explicit series expansion for $T_n(w)^{-1}$ 
that corresponds to a univariate stationary process satisfying some conditions 
(see \cite{SRY}, Section 3.2). The main aim of \cite{SRY} is to reconcile the Gaussian and Whittle 
likelihood, and the series expansion in \cite{SRY} 
is tailored to this purpose, using the {\it complete DFT}\/ 
(discrete Fourier transform) introduced in \cite{SRY}. It should be noticed that $T_n(w)^{-1}$ appears in 
the Gaussian likelihood, while the Whittle likelihood is based on the ordinary DFT. 
Since most results of the present paper 
directly concern $T_n(w)^{-1}$, some of them may also be useful for studies related to the 
Gaussian likelihood.

This paper is organized as follows. 
We state the explicit formulas for $T_n(w)^{-1}$ 
in Section \ref{sec:2}. 
In Section \ref{sec:3}, we first prove (\ref{eq:Tdual123}) and then 
use it to prove the explicit formulas for $T_n(w)^{-1}$. 
In Section \ref{sec:4}, we prove (\ref{eq:conv-z-234}) for $w$ satisfying 
(\ref{eq:A}) and (\ref{eq:S}), using the explicit formulas for $T_n(w)^{-1}$. 
In Section \ref{sec:5}, we prove the closed-form formulas for $T_n(w)^{-1}$ with 
$w$ satisfying (\ref{eq:C}), using the explicit formulas for $T_n(w)^{-1}$. 
In Section \ref{sec:6}, we explain how the results in Section \ref{sec:5} 
give a linear-time algorithm to compute the solution to (\ref{eq:TS234}). 
Finally, the Appendix contains the omitted proofs of two lemmas.


\section{Explicit formulas}\label{sec:2}

Let $\C^{m\times n}$ be the set of all complex $m\times n$ matrices;
we write $\C^d$ for $\C^{d\times 1}$. 
Let $I_n$ be the $n\times n$ unit matrix.
For $a\in \C^{m\times n}$, $a^{\top}$ denotes the transpose of $a$, and
$\overline{a}$ and
$a^*$ the complex and Hermitian conjugates of $a$, respectively;
thus, in particular, $a^*:=\overline{a}^{\top}$. 
For $a\in\C^{d\times d}$, we write $\Vert a\Vert$ for the operator norm of $a$:
\[
\Vert a\Vert:=\sup_{u\in \C^d, \vert u\vert \le 1}\vert au\vert.
\]
Here 
$\vert u\vert:=(\sum_{i=1}^d\vert u^i\vert^2)^{1/2}$ denotes the Euclidean norm of
$u=(u^1,\dots,u^d)^{\top}\in \C^d$. 
For $p\in [1,\infty)$ and $K\subset \Z$, $\ell_p^{d\times d}(K)$ denotes the space of 
$\C^{d\times d}$-valued sequences 
$\{a_k\}_{k\in K}$ such that $\sum_{k\in K}\Vert a_k\Vert^p<\infty$. 
We write $\ell_{p+}^{d\times d}$ for $\ell_p^{d\times d}(\N\cup\{0\})$ 
and $\ell_{p+}$ for $\ell_{p+}^{1\times 1}=\ell_p^{1\times 1}(\N\cup\{0\})$.

Recall $\sigma$ from Section \ref{sec:1}. 
The Hardy class $H_2(\T)$ on $\T$ is the closed subspace of
$L_2(\T)$ consisting of $f\in L_2(\T)$ such that
$\int_{-\pi}^{\pi}e^{im\theta}f(e^{i\theta})\sigma(d\theta)=0$ for $m=1,2,\dots$.
Let $H_2^{m\times n}(\T)$ be the space of $\C^{m\times n}$-valued functions on
$\T$ whose entries belong to $H_2(\T)$.
Let $\D:=\{z\in\C : \vert z\vert<1\}$ be the open unit disk in $\C$.
We write $H_2(\D)$ for the Hardy class on $\D$, consisting of
holomorphic functions $f$ on $\D$ such that
$\sup_{r\in [0,1)}\int_{-\pi}^{\pi}\vert f(re^{i\theta})\vert^2\sigma(d\theta)<\infty$.
As usual, we identify each function $f$ in $H_2(\D)$ with its boundary function
$f(e^{i\theta}):=\lim_{r\uparrow 1}f(re^{i\theta})$, $\sigma$-a.e.,
in $H_2(\T)$.
A function $h$ in $H_2^{d\times d}(\T)$ is called \textit{outer}\/ if $\det h$ is a
$\C$-valued outer function, that is, $\det h$ satisfies
$\log\vert \det h(0)\vert
=\int_{-\pi}^{\pi}\log\vert \det h(e^{i\theta})\vert \sigma(d\theta)$
(see Definition 3.1 in \cite{KK}).

We assume that $w$ satisfies (\ref{eq:A}) and (\ref{eq:M}). 
Then $\log \det w$ is in $L_1(\T)$ (see Section 3 in \cite{IKP2}). Therefore
$w$ has the decompositions (\ref{eq:decomp888}) 
for two outer functions $h$ and $h_{\sharp}$ belonging to $H_2^{d\times d}(\T)$, and
$h$ and $h_{\sharp}$ are unique up to constant unitary factors 
(see Chapter II in \cite{Roz} and Theorem 11 in \cite{HL}; see also Section 3 in \cite{IKP2}). 
We may take $h_{\sharp}=h$ for the case $d=1$ but there is no such simple
relation between $h$ and $h_{\sharp}$ for $d\ge 2$. 
We define the outer function $\tilde{h}$ in $H_2^{d\times d}(\T)$ by
\begin{equation}
\tilde{h}(z) := \{h_{\sharp}(\overline{z})\}^*.
\label{eq:outft628}
\end{equation}
All of $h^{-1}$, $h_{\sharp}^{-1}$ and $\tilde{h}^{-1}$ also belong to $H_2^{d\times d}(\T)$ since 
we have assumed (\ref{eq:M}).

We define four $\C^{d\times d}$-valued sequences $\{c_k\}$, $\{a_k\}$, $\{\tilde{c}_k\}$ and $\{\tilde{a}_k\}$ by
\begin{align}
h(z)&=\sum_{k=0}^{\infty}z^kc_k,\qquad z\in\D,
\label{eq:MA111}\\
-h(z)^{-1}&=\sum_{k=0}^{\infty}z^ka_k,\qquad z\in\D,
\label{eq:AR111}\\
\tilde{h}(z)&=\sum_{k=0}^{\infty}z^k\tilde{c}_k,\qquad z\in\D,
\label{eq:MA222}
\end{align}
and
\begin{equation}
-\tilde{h}(z)^{-1}=\sum_{k=0}^{\infty}z^k\tilde{a}_k,\qquad z\in\D,
\label{eq:AR222}
\end{equation}
respectively. By (\ref{eq:M}), 
all of $\{c_k\}$, $\{a_k\}$, $\{\tilde{c}_k\}$ and $\{\tilde{a}_k\}$ belong to $\ell_{2+}^{d\times d}$.

We define a $\C^{d\times d}$-valued sequence $\{\beta_k\}_{k=-\infty}^{\infty}$ as the 
(minus of the) Fourier coefficients of the phase function $h^*h_{\sharp}^{-1}=h^{-1}h_{\sharp}^*$:
\begin{equation}
\begin{aligned}
\beta_k 
&=
-\int_{-\pi}^{\pi}e^{-ik\theta} h(e^{i\theta})^* h_{\sharp}(e^{i\theta})^{-1} \frac{d\theta}{2\pi}\\
&=
-\int_{-\pi}^{\pi}e^{-ik\theta} h(e^{i\theta})^{-1} h_{\sharp}(e^{i\theta})^* \frac{d\theta}{2\pi}, 
\qquad k \in \Z.
\end{aligned}
\label{eq:beta-def667}
\end{equation}
For $n\in\mathbb{N}$, $u \in \{1,\dots,n\}$ and $k\in\mathbb{N}$, we can define 
the sequences $\{b_{n,u,\ell}^k\}_{\ell=0}^{\infty}\in \ell_{2+}^{d\times d}$
by the recursion
\begin{equation}
\left\{
\begin{aligned}
b_{n,u,\ell}^1&=\beta_{u+\ell},\\
b_{n,u,\ell}^{2k}
&=\sum_{m=0}^{\infty} b_{n,u,m}^{2k-1} \beta_{n+1+m+\ell}^*,\qquad 
b_{n,u,\ell}^{2k+1}
=\sum_{m=0}^{\infty} b_{n,u,m}^{2k} \beta_{n+1+m+\ell}
\end{aligned}
\right.
\label{eq:recurs123}
\end{equation}
(see Section \ref{sec:3} below). 
Similarly, for $n\in\mathbb{N}$, $u \in \{1,\dots,n\}$ and $k\in\mathbb{N}$, we can define 
the sequences $\{\tilde{b}_{n,u,\ell}^k\}_{\ell=0}^{\infty}\in \ell_{2+}^{d\times d}$
by the recursion
\begin{equation}
\left\{
\begin{aligned}
\tilde{b}_{n,u,\ell}^1&=\beta_{n+1-u+\ell}^*,\\
\tilde{b}_{n,u,\ell}^{2k}
&=\sum_{m=0}^{\infty} \tilde{b}_{n,u,m}^{2k-1} \beta_{n+1+m+\ell},\qquad
\tilde{b}_{n,u,\ell}^{2k+1}
=\sum_{m=0}^{\infty} \tilde{b}_{n,u,m}^{2k} \beta_{n+1+m+\ell}^*.
\end{aligned}
\right.
\label{eq:til-recurs123}
\end{equation}

Recall from Section \ref{sec:1} that $(T_n(w)^{-1})^{s,t}$ denotes 
the $(s,t)$ block of $T_n(w)^{-1}$. 
Since $T_n(w)$, hence $T_n(w)^{-1}$, is self-adjoint, we have 
\begin{equation}
(T_n(w)^{-1})^{s,t} = ((T_n(w)^{-1})^{t,s})^*, \qquad s, t \in \{1,\dots,n\}.
\label{eq:SA-123}
\end{equation}
We use the following notation:
\[
s\vee t:=\max(s, t), \qquad s\wedge t:=\min(s, t).
\]

We are ready to state the explicit formulas for $(T_n(w))^{-1}$.

\begin{theorem}\label{thm:TforXwithM123}
We assume (\ref{eq:A}) and (\ref{eq:M}). Then the following two assertions hold.
\begin{enumerate}
\item For $n\in\N$ and $s, t\in\{1,\dots,n\}$, we have
\begin{equation}
\begin{aligned}
&\left(T_n(w)^{-1}\right)^{s,t} 
= \sum_{\ell = 1}^{s\wedge t} \tilde{a}_{s - \ell}^* \tilde{a}_{t - \ell} \\
&\qquad  + \sum_{u=1}^t \sum_{k=1}^{\infty} 
     \left\{ 
          \sum_{\ell = 0}^{\infty} \tilde{b}_{n,u,\ell}^{2k-1} a_{n + 1 - s + \ell} 
       +  \sum_{\ell = 0}^{\infty} \tilde{b}_{n,u,\ell}^{2k}   \tilde{a}_{s + \ell}
     \right\}^* \tilde{a}_{t-u}.
\end{aligned}
\label{eq:TforXwithM222}
\end{equation}
\item 
For $n\in\N$ and $s, t\in\{1,\dots,n\}$, we have
\begin{equation}
\begin{aligned}
&\left(T_n(w)^{-1}\right)^{s,t} 
= \sum_{\ell=s\vee t}^n a_{\ell - s}^* a_{\ell - t} \\
&\qquad  + \sum_{u=t}^n \sum_{k=1}^{\infty} 
     \left\{ 
          \sum_{\ell = 0}^{\infty} b_{n,u,\ell}^{2k-1} \tilde{a}_{s + \ell} 
       +  \sum_{\ell = 0}^{\infty} b_{n,u,\ell}^{2k}   a_{n + 1 - s + \ell}
     \right\}^* a_{u-t}.
\end{aligned}
\label{eq:TforXwithM111}
\end{equation}
\end{enumerate}
\end{theorem}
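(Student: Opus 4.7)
The plan is to derive both formulas from the Gram matrix identity
\[
\left(T_n(w)^{-1}\right)^{s,t} = \langle X^{\prime}_s, P_{[1,n]}X^{\prime}_t\rangle
\]
(i.e.\ equation \eqref{eq:Tdual123}, which by hypothesis I may take as established in the preceding Theorem~\ref{thm:Tdual123}). Thus the whole problem reduces to obtaining two explicit series expansions for $P_{[1,n]}X^{\prime}_t$ along the dual process, one anchored at the left end of the interval $[1,n]$ and one anchored at the right end, and then pairing each of them against $X^{\prime}_s$ using the orthogonality of innovations.

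The first step is to recall that under \eqref{eq:A} and \eqref{eq:M} the infinite past and infinite future projections of the dual process admit closed formulas in terms of the outer factors $h$ and $\tilde h$: the Wold-type expansions of $X^{\prime}_t$ driven by the innovations of $\{X_k\}$ and of the reversed process involve precisely the AR-coefficients $\{a_k\}$ and $\{\tilde a_k\}$ from \eqref{eq:AR111} and \eqref{eq:AR222}. In particular, if one let $P_{[1,\infty)}$ and $P_{(-\infty,n]}$ denote the projections onto the past and future infinite half-lines, then $P_{(-\infty,n]}X^{\prime}_t$ and $P_{[1,\infty)}X^{\prime}_t$ can each be written as an explicit linear combination of $\{X_k\}$ with coefficients coming from $\{\tilde a_k\}$ (respectively $\{a_k\}$); these two one-sided predictors will account, after pairing with $X^{\prime}_s$, for the two ``diagonal'' sums $\sum_{\ell=1}^{s\wedge t}\tilde a_{s-\ell}^{*}\tilde a_{t-\ell}$ in \eqref{eq:TforXwithM222} and $\sum_{\ell=s\vee t}^{n}a_{\ell-s}^{*}a_{\ell-t}$ in \eqref{eq:TforXwithM111}.

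The second and central step is to pass from the one-sided projection to the finite interval projection $P_{[1,n]}$ by the alternating projection scheme developed in \cite{I00,I08,IK06,IKP1,IKP2}. Schematically,
\[
P_{[1,n]} = P_{(-\infty,n]} - P_{(-\infty,n]}P_{[1,\infty)^{c}} + P_{(-\infty,n]}P_{[1,\infty)^{c}}P_{(-\infty,n]^{c}} - \cdots ,
\]
and the key point is that each elementary step ``past-to-future'' or ``future-to-past'' in the dual setting is implemented by multiplication with the phase function $h^{*}h_{\sharp}^{-1}=h^{-1}h_{\sharp}^{*}$ on the spectral side. Consequently the Fourier coefficients $\{\beta_k\}$ from \eqref{eq:beta-def667} appear as the transition kernel at each alternation, and iterating yields precisely the sequences $\{b_{n,u,\ell}^{k}\}$ and $\{\tilde b_{n,u,\ell}^{k}\}$ defined by the recursions \eqref{eq:recurs123} and \eqref{eq:til-recurs123}, starting either from the right endpoint $n$ (producing $b$) or from the left endpoint $1$ (producing $\tilde b$). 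Formula~(i) is obtained by running the alternation starting from $P_{(-\infty,n]}$, formula~(ii) by starting from $P_{[1,\infty)}$; the symmetry between the two is also consistent with the self-adjointness relation \eqref{eq:SA-123}.

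The main obstacle I expect is twofold: first, justifying that the alternating series converges in the Hilbert space of the process, which requires showing that the iterated Hankel-type operators whose symbols come from the tails of $\{\beta_k\}$ are contractive under assumption \eqref{eq:M}; and second, carrying out carefully the double bookkeeping by which each $\beta$-convolution in the projection iteration produces exactly the index shifts $n+1+m+\ell$ and the alternation between $\beta_j$ and $\beta_j^{*}$ seen in \eqref{eq:recurs123} and \eqref{eq:til-recurs123}. Once these two steps are in place, pairing against $X^{\prime}_s$ and using the biorthogonality between $\{X_k\}$ and the dual innovations collapses the inner products to the coefficient sums displayed in \eqref{eq:TforXwithM222} and \eqref{eq:TforXwithM111}.
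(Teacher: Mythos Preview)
Your proposal is essentially the paper's own argument: starting from the dual-process identity \eqref{eq:Tdual123}, one first projects $X'_t$ onto a half-line (which via the MA representations \eqref{eq:MAdual111}--\eqref{eq:MAdual222} yields the diagonal sums), and then applies the alternating-projection scheme with transition kernel $\{\beta_k\}$ to each innovation $\varepsilon_u$ or $\tilde\varepsilon_{-u}$ appearing in the resulting finite sum, producing exactly the $b/\tilde b$ recursions. Two small corrections to your sketch: formula~(i) arises from starting with $P_{[1,\infty)}$ and formula~(ii) from $P_{(-\infty,n]}$ (you have these swapped), and convergence of the alternating series is obtained in the paper via the intersection-of-past-and-future property \eqref{eq:IPF} (Theorem~3.2 in \cite{IKP2}) rather than by a direct contraction estimate on the Hankel-type operators.
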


The proof of Theorem \ref{thm:TforXwithM123} will be given in Section \ref{sec:3}.

\begin{corollary}\label{cor:TforXwithM456}
We assume (\ref{eq:A}) and (\ref{eq:M}). Then the following two assertions hold.
\begin{enumerate}
\item For $n\in\N$ and $s, t\in\{1,\dots,n\}$, we have
\begin{equation}
\begin{aligned}
&\left(T_n(w)^{-1}\right)^{s,t} 
= \sum_{\ell = 1}^{s\wedge t} \tilde{a}_{s - \ell}^* \tilde{a}_{t - \ell} \\
&\qquad  + \sum_{u=1}^s \tilde{a}_{s-u}^* \sum_{k=1}^{\infty} 
     \left\{ 
          \sum_{\ell = 0}^{\infty} \tilde{b}_{n,u,\ell}^{2k-1} a_{n + 1 - t + \ell} 
       +  \sum_{\ell = 0}^{\infty} \tilde{b}_{n,u,\ell}^{2k}   \tilde{a}_{t + \ell}
     \right\}.
\end{aligned}
\label{eq:TforXwithM444}
\end{equation}

\item For $n\in\N$ and $s, t\in\{1,\dots,n\}$, we have
\begin{equation}
\begin{aligned}
&\left(T_n(w)^{-1}\right)^{s,t} 
= \sum_{\ell=s\vee t}^n a_{\ell - s}^* a_{\ell - t} \\
&\qquad  + \sum_{u=s}^n a_{u-s}^* \sum_{k=1}^{\infty} 
     \left\{ 
          \sum_{\ell = 0}^{\infty} b_{n,u,\ell}^{2k-1} \tilde{a}_{t + \ell} 
       +  \sum_{\ell = 0}^{\infty} b_{n,u,\ell}^{2k}   a_{n + 1 - t + \ell}
     \right\}.
\end{aligned}
\label{eq:TforXwithM333}
\end{equation}
\end{enumerate}
\end{corollary}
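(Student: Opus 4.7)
The corollary follows from Theorem \ref{thm:TforXwithM123} combined with the self-adjointness relation (\ref{eq:SA-123}). The plan is simply to write Theorem \ref{thm:TforXwithM123} with the roles of $s$ and $t$ interchanged, take the Hermitian conjugate of the resulting identity, and then invoke $(T_n(w)^{-1})^{s,t} = ((T_n(w)^{-1})^{t,s})^*$ to identify the left-hand side.

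In more detail, to prove (\ref{eq:TforXwithM444}) I would start from (\ref{eq:TforXwithM222}) applied to the pair $(t,s)$ in place of $(s,t)$, which gives an expression for $(T_n(w)^{-1})^{t,s}$. Then I would take the adjoint of both sides. The diagonal-type term transforms as
\[
\left(\sum_{\ell=1}^{s\wedge t} \tilde{a}_{t-\ell}^* \tilde{a}_{s-\ell}\right)^{\!*}
= \sum_{\ell=1}^{s\wedge t} \tilde{a}_{s-\ell}^* \tilde{a}_{t-\ell},
\]
while in the double sum the factor $\tilde{a}_{s-u}$ becomes $\tilde{a}_{s-u}^*$ and migrates to the left of the braced quantity, whose interior is then left unadjointed; since the indices $n+1-t+\ell$ and $t+\ell$ inside the braces are unchanged by the swap, the result is exactly the right-hand side of (\ref{eq:TforXwithM444}). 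Finally, (\ref{eq:SA-123}) identifies this with $(T_n(w)^{-1})^{s,t}$. The proof of (\ref{eq:TforXwithM333}) is completely analogous, starting from (\ref{eq:TforXwithM111}) in place of (\ref{eq:TforXwithM222}).

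There is no real obstacle here: the argument is entirely bookkeeping once Theorem \ref{thm:TforXwithM123} is in hand. The only point that requires a moment of care is making sure that the summation range $\{1,\dots,t\}$ in (\ref{eq:TforXwithM222}) (respectively $\{t,\dots,n\}$ in (\ref{eq:TforXwithM111})) correctly becomes $\{1,\dots,s\}$ (respectively $\{s,\dots,n\}$) after swapping $s$ and $t$, which matches the ranges in the corollary.
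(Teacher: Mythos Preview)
Your proposal is correct and follows exactly the same approach as the paper: the paper's proof simply reads ``Thanks to (\ref{eq:SA-123}), we obtain (\ref{eq:TforXwithM444}) and (\ref{eq:TforXwithM333}) from (\ref{eq:TforXwithM222}) and (\ref{eq:TforXwithM111}), respectively.'' Your more detailed bookkeeping (swap $s\leftrightarrow t$, take the Hermitian conjugate, and use self-adjointness) spells out precisely what the paper means by this one sentence.
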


\begin{proof}
Thanks to (\ref{eq:SA-123}), we obtain 
(\ref{eq:TforXwithM444}) and (\ref{eq:TforXwithM333}) 
from 
(\ref{eq:TforXwithM222}) and (\ref{eq:TforXwithM111}), 
respectively.
\end{proof}

\begin{remark}\label{rem:1st-term-123}
Recall $T_{\infty}(w)$ from (\ref{eq:Tinfty123}). 
For $n\in\N\cup\{0\}$, we have $\gamma(n)=\sum_{k=0}^{\infty} \tilde{c}_k \tilde{c}_{n+k}^*$ and 
$\gamma(-n)=\sum_{k=0}^{\infty} \tilde{c}_{n+k} \tilde{c}_{k}^*$ (see (2.13) in \cite{IKP2}), 
hence $T_{\infty}(w) = \tilde{C}_{\infty} (\tilde{C}_{\infty})^*$, 
where
\[
\tilde{C}_{\infty} := 
\left(
\begin{matrix}
\tilde{c}_0     &  \tilde{c}_1  &  \tilde{c}_2  &    \cdots   \cr
                &  \tilde{c}_0  &  \tilde{c}_1  &    \cdots   \cr
                &               &  \tilde{c}_0  &    \cdots   \cr
 \mbox{\huge 0} &               &               &    \ddots   
\end{matrix}
\right).
\]
On the other hand, it follows from $\tilde{h}(z)\tilde{h}(z)^{-1}=I_d$ that 
$\sum_{k=0}^n \tilde{c}_k \tilde{a}_{n-k} = -\delta_{n0}I_d$ for $n\in\N\cup\{0\}$, 
hence $\tilde{C}_{\infty} \tilde{A}_{\infty} = -I_{\infty}$, where
\[
\tilde{A}_{\infty} := 
\left(
\begin{matrix}
\tilde{a}_0     &  \tilde{a}_1  &  \tilde{a}_2  &    \cdots   \cr
                &  \tilde{a}_0  &  \tilde{a}_1  &    \cdots   \cr
                &               &  \tilde{a}_0  &    \cdots   \cr
 \mbox{\huge 0} &               &               &    \ddots   
\end{matrix}
\right),
\qquad 
I_{\infty} := 
\left(
\begin{matrix}
1  &  0  &  0  &   \cdots   \cr
0  &  1  &  0  &   \cdots   \cr
0  &  0  &  1  &   \cdots   \cr
\vdots     &  \vdots      &  \vdots      &   \ddots   
\end{matrix}
\right).
\]
Combining, we have $T_{\infty}(w)^{-1} = (\tilde{A}_{\infty})^* \tilde{A}_{\infty}$. 
Thus, we find that the first term $\sum_{\ell = 1}^{s\wedge t} \tilde{a}_{s - \ell}^* \tilde{a}_{t - \ell}$ 
in (\ref{eq:TforXwithM222}) or (\ref{eq:TforXwithM444}) coincides with the $(s,t)$ block of 
$T_{\infty}(w)^{-1}$.
\end{remark}

For $n\in\N$, we define
\begin{equation}
\tilde{A}_n := 
\left(
\begin{matrix}
\tilde{a}_0    &  \tilde{a}_1  &  \tilde{a}_2  &    \cdots   &  \tilde{a}_{n-1}  \cr
               &  \tilde{a}_0  &  \tilde{a}_1  &    \cdots   &  \tilde{a}_{n-2}  \cr
               &               &  \ddots       &    \ddots   &  \vdots           \cr
               &               &               &    \ddots   &  \tilde{a}_1      \cr
\mbox{\huge 0} &               &               &             &  \tilde{a}_0
\end{matrix}
\right) \in \C^{dn\times dn}
\label{eq:tildeAnU-234}
\end{equation}
and
\begin{equation}
A_n := 
\left(
\begin{matrix}
a_0      &          &          &          &   \mbox{\huge 0} \cr
a_1      &  a_0     &          &          &                  \cr
a_2      &  a_1     &  \ddots  &          &                  \cr
\vdots   &  \vdots  &  \ddots  &  \ddots  &                  \cr
a_{n-1}  &  a_{n-2} &  \cdots  &  a_1     &  a_0
\end{matrix}
\right) \in \C^{dn\times dn}.
\label{eq:AnL-234}
\end{equation}

The next lemma will turn out to be useful in Section \ref{sec:6}.

\begin{lemma}\label{lem:decomp234}
For $n\in\N$ and $s, t \in \{1,\dots,n\}$, we have the following two equalities:
\begin{align*}
\left(\tilde{A}_n^* \tilde{A}_n\right)^{s,t}
&= \sum_{\ell = 1}^{s\wedge t} \tilde{a}_{s - \ell}^* \tilde{a}_{t - \ell},
\\
\left(A_n^* A_n\right)^{s,t}
&= \sum_{\ell=s\vee t}^n a_{\ell - s}^* a_{\ell - t}.
\end{align*}
\end{lemma}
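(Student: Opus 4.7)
The plan is to prove both identities by direct block-matrix multiplication, reading the $(s,k)$ blocks off the definitions (\ref{eq:tildeAnU-234}) and (\ref{eq:AnL-234}) and then identifying the indices for which both factors in the block sum are nonzero.

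First I would record the block entries of the two triangular matrices and of their adjoints. From (\ref{eq:tildeAnU-234}), for $s, k \in \{1,\dots,n\}$,
\[
(\tilde{A}_n)^{s,k} = \tilde{a}_{k-s}\,\mathbf{1}_{\{k \ge s\}},
\qquad
(\tilde{A}_n^*)^{s,k} = ((\tilde{A}_n)^{k,s})^* = \tilde{a}_{s-k}^*\,\mathbf{1}_{\{s \ge k\}}.
\]
Similarly, from (\ref{eq:AnL-234}),
\[
(A_n)^{s,k} = a_{s-k}\,\mathbf{1}_{\{s \ge k\}},
\qquad
(A_n^*)^{s,k} = ((A_n)^{k,s})^* = a_{k-s}^*\,\mathbf{1}_{\{k \ge s\}}.
\]

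Next I would multiply out the $(s,t)$ block as a sum over the intermediate index $k$. For the first identity,
\[
(\tilde{A}_n^* \tilde{A}_n)^{s,t}
= \sum_{k=1}^n (\tilde{A}_n^*)^{s,k}\,(\tilde{A}_n)^{k,t}
= \sum_{k=1}^n \tilde{a}_{s-k}^* \tilde{a}_{t-k}\,\mathbf{1}_{\{s \ge k\}}\,\mathbf{1}_{\{t \ge k\}}
= \sum_{k=1}^{s\wedge t} \tilde{a}_{s-k}^* \tilde{a}_{t-k},
\]
after which the substitution $\ell = k$ yields (\ref{lem:decomp234}). For the second identity,
\[
(A_n^* A_n)^{s,t}
= \sum_{k=1}^n (A_n^*)^{s,k}\,(A_n)^{k,t}
= \sum_{k=1}^n a_{k-s}^* a_{k-t}\,\mathbf{1}_{\{k \ge s\}}\,\mathbf{1}_{\{k \ge t\}}
= \sum_{k=s\vee t}^n a_{k-s}^* a_{k-t},
\]
and again renaming $k$ as $\ell$ matches the stated formula.

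There is no real obstacle here; the proof is just bookkeeping of the index ranges at which the block Toeplitz entries survive. The only point to be careful about is the relation $(M^*)^{s,k} = ((M)^{k,s})^*$ for the block entries of a matrix of blocks, which is immediate from the definition of Hermitian conjugation applied blockwise.
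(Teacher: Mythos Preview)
Your proof is correct and is precisely the straightforward block-matrix computation the paper has in mind; indeed, the paper omits the proof entirely, stating only that it ``is straightforward and will be omitted.'' There is nothing to add beyond your bookkeeping of the triangular index ranges.
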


The proof of Lemma \ref{lem:decomp234} is straightforward and will be omitted.


\section{Proof of Theorem \ref{thm:TforXwithM123}}\label{sec:3}

In this section, we prove Theorem \ref{thm:TforXwithM123}. 
We assume (\ref{eq:A}) and (\ref{eq:M}). 
Let $\{X_k\}=\{X_k:k\in\Z\}$ be a $\C^d$-valued, centered,
weakly stationary process, defined on a probability space $(\Omega, \mcF, P)$, 
that has spectral density $w$, hence autocovariance function $\gamma$. 
Thus we have 
$E[X_k X_0^*] = \gamma(k) = \int_{-\pi}^{\pi}e^{-ik\theta}w(e^{i\theta})(d\theta/(2\pi))$ for 
$k\in\Z$.

Write $X_k=(X^1_k,\dots,X^d_k)^{\top}$, and
let $V$ be the complex Hilbert space
spanned by all the entries $\{X^j_k: k\in\Z,\ j=1,\dots,d\}$ in $L^2(\Omega, \mcF, P)$,
which has inner product $(x, y)_{V}:=E[x\overline{y}]$ and
norm $\Vert x\Vert_{V}:=(x,x)_{V}^{1/2}$.
For $J\subset \Z$ such as $\{n\}$,
$(-\infty,n]:=\{n,n-1,\dots\}$, $[n,\infty):=\{n,n+1,\dots\}$,
and $[m,n]:=\{m,\dots,n\}$ with $m\le n$,
we define the closed subspace $V_J^X$ of $V$ by
\[
V_J^X:=\cspn \{X^j_k: j=1,\dots,d,\ k\in J\}.
\]
Let $P_J$ and $P_J^{\perp}$ be the orthogonal projection operators of $V$ onto
$V_J^X$ and $(V_J^X)^{\perp}$, respectively, where 
$(V_J^X)^{\bot}$ denotes the orthogonal complement of $V_J^X$ in $V$.

By Theorem 3.1 in \cite{I00} for $d=1$ and Corollary 3.6 in \cite{IKP1} for general $d\ge 1$, the conditions 
(\ref{eq:A}) and (\ref{eq:M}) imply the following \textit{intersection of past and future}
property:
\begin{equation}
V_{(-\infty,n]}^X\cap V_{[1,\infty)}^X=V_{[1,n]}^X,\qquad n \in \N.
\label{eq:IPF}
\end{equation}

Let $V^d$ be the space of $\C^d$-valued random variables on
$(\Omega, \mcF, P)$ whose entries belong to $V$.
The norm $\Vert x\Vert_{V^d}$ of $x=(x^1,\dots,x^d)^{\top}\in V^d$ is given by
$\Vert x\Vert_{V^d}:=(\sum_{i=1}^d \Vert x^i\Vert_V^2)^{1/2}$.
For $J\subset\mathbb{Z}$ and
$x=(x^1,\dots,x^d)^{\top}\in V^d$, we write $P_Jx$ for $(P_Jx^1, \dots, P_Jx^d)^{\top}$.
We define $P_J^{\perp}x$ in a similar way. 
For $x=(x^1,\dots,x^d)^{\top}$ and $y=(y^1,\dots,y^d)^{\top}$ in $V^d$,
\[
\langle x,y\rangle:=E[xy^*]=((x^k, y^{\ell})_V)_{1\le k, \ell\le d}
\in \C^{d\times d}
\]
stands for the Gram matrix of $x$ and $y$.

Let
\[
X_k=\int_{-\pi}^{\pi}e^{-ik\theta}\eta(d\theta),\qquad k\in\Z,
\]
be the spectral representation of $\{X_k\}$, where $\eta$ is a $\C^d$-valued
random spectral measure. 
We define a $d$-variate stationary process $\{\varepsilon_k:k\in\Z\}$, 
called the \textit{forward innovation
process}\/ of $\{X_k\}$, by
\[
\varepsilon_k:=\int_{-\pi}^{\pi}e^{-ik\theta}h(e^{i\theta})^{-1}\eta(d\theta),\qquad k\in\Z.
\]
Then, $\{\varepsilon_k\}$ satisfies 
$\langle \varepsilon_n, \varepsilon_m\rangle = \delta_{n m}I_d$ and 
$V_{(-\infty,n]}^X=V_{(-\infty,n]}^{\varepsilon}$ for $n\in\Z$,
hence
\[
(V_{(-\infty,n]}^X)^{\bot} = V_{[n+1, \infty)}^{\varepsilon},\qquad n\in\Z.
\]
Recall the outer function $h_{\sharp}$ in $H_2^{d\times d}(\T)$ from (\ref{eq:decomp888}). 
We define the \textit{backward innovation process}\/ $\{\tilde{\varepsilon}_k: k\in\Z\}$ of $\{X_k\}$ by
\[
\tilde{\varepsilon}_k:=\int_{-\pi}^{\pi}e^{ik\theta}\{h_{\sharp}(e^{i\theta})^*\}^{-1}
\eta(d\theta),\qquad k\in\Z.
\]
Then, $\{\tilde{\varepsilon}_k\}$ satisfies
$\langle \tilde{\varepsilon}_n, \tilde{\varepsilon}_m\rangle=\delta_{n m}I_d$ and 
$V_{[-n,\infty)}^X=V_{(-\infty, n]}^{\tilde{\varepsilon}}$ for $n\in\Z$, 
hence
\[
(V_{[-n,\infty)}^X)^{\bot} = V_{[n+1,\infty)}^{\tilde{\varepsilon}},\qquad n\in\Z
\]
(see Section 2 in \cite{IKP2}). Moreover, by Lemma 4.1 in \cite{IKP2}, we have
\begin{equation}
\langle\varepsilon_{\ell}, \tilde{\varepsilon}_{m}\rangle = -\beta_{\ell+m},\qquad
\langle\tilde{\varepsilon}_{m}, \varepsilon_{\ell}\rangle = -\beta_{\ell+m}^*,
\qquad \ell, m \in \Z.
\label{eq:beta234}
\end{equation}

By (\ref{eq:beta234}), for $\{s_{\ell}\}\in \ell_{2+}^{d\times d}$ and $n\in\N$, 
\begin{align}
P_{[1,\infty)}^{\perp}
\left(\sum_{\ell=0}^{\infty} s_{\ell} \varepsilon_{n+1+\ell}\right)
&=-\sum_{\ell=0}^\infty
\left(\sum_{m=0}^\infty s_m \beta_{n+1+\ell+m}\right)
\tilde{\varepsilon}_{\ell},
\label{eq:proj555}\\
P_{(-\infty,n]}^{\perp}
\left(\sum_{\ell=0}^\infty s_{\ell} \tilde{\varepsilon}_{\ell}\right)
&=-\sum_{\ell=0}^\infty
\left(\sum_{m=0}^\infty s_m \beta_{n+1+\ell+m}^*\right)
\varepsilon_{n+1+\ell}.
\label{eq:proj666}
\end{align}
Therefore,
\[
\left\{\sum_{m=0}^\infty s_m \beta_{n+1+\ell+m}\right\}_{\ell=0}^{\infty}, \ 
\left\{\sum_{m=0}^\infty s_m \beta_{n+1+\ell+m}^*\right\}_{\ell=0}^{\infty}\ \in \ 
\ell_{2+}^{d\times d}.
\]
See Lemma 4.2 in \cite{IKP2}. In particular, 
for $n\in\mathbb{N}$, $u \in \{1,\dots,n\}$ and $k\in\mathbb{N}$, we can define 
the sequences $\{b_{n,u,\ell}^k\}_{\ell=0}^{\infty}\in \ell_{2+}^{d\times d}$ 
and $\{\tilde{b}_{n,u,\ell}^k\}_{\ell=0}^{\infty}\in \ell_{2+}^{d\times d}$
by the recursions (\ref{eq:recurs123}) and (\ref{eq:til-recurs123}), respectively.

By (\ref{eq:A}) and (\ref{eq:M}), $\{X_k\}$ has the 
dual process $\{X^{\prime}_k: k\in\Z\}$, which is a $\C^d$-valued, centered, 
weakly stationary process 
characterized by the biorthogonality relation
\[
\langle X_s,X^{\prime}_t\rangle=\delta_{st}I_d, \qquad s, t\in\Z
\]
(see \cite{M60}). 
Recall $\{a_k\} \in \ell_{2+}^{d\times d}$ and $\{\tilde{a}_k\} \in \ell_{2+}^{d\times d}$ 
from (\ref{eq:AR111}) and (\ref{eq:AR222}), respectively. 
The dual process $\{X^{\prime}_k\}$ admits the following two MA representations (see Section 5 in \cite{IKP2}):
\begin{align}
X^{\prime}_n&=-\sum_{\ell=n}^\infty a_{\ell-n}^* \varepsilon_{\ell}, \qquad n\in\Z, 
\label{eq:MAdual111}\\
X^{\prime}_n&=-\sum_{\ell=-n}^{\infty} \tilde{a}_{\ell+n}^* \tilde{\varepsilon}_{\ell},\qquad n\in\Z.
\label{eq:MAdual222}
\end{align}

The next theorem is the key to the proof of Theorem \ref{thm:TforXwithM123}.

\begin{theorem}\label{thm:Tdual123}
Assume (\ref{eq:A}) and (\ref{eq:M}). Then, for $n\in\N$ and $s,t \in \{1,\dots,n\}$, we have (\ref{eq:Tdual123}).
\end{theorem}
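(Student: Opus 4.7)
The plan is to use two elementary facts: that $T_n(w)$ is the Gram matrix of $(X_1,\dots,X_n)^\top$, so $T_n(w)^{s,t}=\gamma(s-t)=\langle X_s,X_t\rangle$; and that the dual process is characterized by the biorthogonality $\langle X_s,X'_t\rangle=\delta_{st}I_d$. I would expand $P_{[1,n]}X'_t$ as a linear combination of $X_1,\dots,X_n$, read off the coefficient matrices from the biorthogonality, identify them with blocks of $T_n(w)^{-1}$, and then compute $\langle X'_s,P_{[1,n]}X'_t\rangle$ directly.

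Concretely, since each component of $P_{[1,n]}X'_t$ lies in $V_{[1,n]}^X$, there exist (unique, by invertibility of $T_n(w)$) matrices $c_{t,k}\in\C^{d\times d}$, $k=1,\dots,n$, with $P_{[1,n]}X'_t=\sum_{k=1}^n c_{t,k}\,X_k$. Applying $\langle\,\cdot\,,X_j\rangle$ to both sides for $j\in\{1,\dots,n\}$ and using $\langle P_{[1,n]}X'_t,X_j\rangle=\langle X'_t,X_j\rangle=\delta_{tj}I_d$ (because $X'_t-P_{[1,n]}X'_t$ is componentwise orthogonal to $V_{[1,n]}^X$) gives the block system
$$\sum_{k=1}^n c_{t,k}\,\gamma(k-j)=\delta_{tj}I_d,\qquad j=1,\dots,n.$$
Taking Hermitian conjugates and using $\gamma(k-j)^*=\gamma(j-k)$ (from $w=w^*$), this becomes $\sum_{k=1}^n \gamma(j-k)\,c_{t,k}^*=\delta_{jt}I_d$, which is exactly the defining equation for the $t$th block column of $T_n(w)^{-1}$. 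Hence $c_{t,k}^*=(T_n(w)^{-1})^{k,t}$.

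For the final step, the identity $\langle x,Ay\rangle=\langle x,y\rangle A^*$ together with the biorthogonality $\langle X'_s,X_k\rangle=\delta_{sk}I_d$ (obtained by Hermitian-conjugating $\langle X_k,X'_s\rangle=\delta_{ks}I_d$) give
$$\langle X'_s,P_{[1,n]}X'_t\rangle=\sum_{k=1}^n\langle X'_s,X_k\rangle\,c_{t,k}^*=c_{t,s}^*=(T_n(w)^{-1})^{s,t},$$
which is (\ref{eq:Tdual123}).

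The only real subtlety is careful bookkeeping with Hermitian conjugates under the matrix-valued Gram pairing $\langle\cdot,\cdot\rangle$: one must track that matrices acting on the left of random vectors appear as Hermitian conjugates on the right side of the Gram matrix, and that Hermitian-conjugating the normal equations interchanges the roles of the row and column indices of $T_n(w)^{-1}$. Beyond this, the argument reduces to a direct consequence of the defining biorthogonality of the dual process, so I do not anticipate a substantive analytic obstacle.
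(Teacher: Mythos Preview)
Your proof is correct and follows essentially the same approach as the paper's. The only cosmetic difference is that the paper expands $P_{[1,n]}X'_s$ (the first slot) rather than $P_{[1,n]}X'_t$, writing $P_{[1,n]}X'_s=\sum_{k=1}^n q_{s,k}X_k$; this yields $\sum_k q_{s,k}\gamma(k-t)=\delta_{st}I_d$ directly, i.e.\ $Q_nT_n(w)=I_{dn}$, without the need to Hermitian-conjugate the normal equations.
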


\begin{proof}
Fix $n\in\N$. For $s\in \{1,\dots,n\}$, we can write 
$P_{[1,n]}X^{\prime}_s = \sum_{k=1}^n q_{s,k} X_k$ 
for some $q_{s,k}\in\C^{d\times d}$, $k\in\{1,\dots,n\}$. 
For $s,t \in \{1,\dots,n\}$, we have
\[
\begin{aligned}
\delta_{st}I_d 
&= \langle X^{\prime}_s, X_t\rangle 
= \langle X^{\prime}_s, P_{[1,n]}X_t\rangle 
= \langle P_{[1,n]}X^{\prime}_s, X_t \rangle
= \left\langle \sum_{k=1}^n q_{s,k} X_k, X_t\right\rangle\\
&=\sum_{k=1}^n q_{s,k} \left\langle X_k, X_t \right\rangle 
=\sum_{k=1}^n q_{s,k} \gamma(k-t),
\end{aligned}
\]
or $Q_n T_n(w) = I_{dn}$, where 
$Q_n := (q_{s,k})_{1\le s, k\le n}\in \C^{dn\times dn}$. Therefore, we have 
$Q_n = T_n(w)^{-1}$. 
However, 
\[
\langle X^{\prime}_s, P_{[1,n]}X^{\prime}_t\rangle 
= \langle P_{[1,n]}X^{\prime}_s, X^{\prime}_t\rangle 
= \left\langle \sum_{k=1}^n q_{s,k} X_k, X^{\prime}_t \right\rangle
= \sum_{k=1}^n q_{s,k} \langle X_k, X^{\prime}_t\rangle = q_{s,t}.
\]
Thus, the theorem follows.
\end{proof}

\begin{lemma}\label{lem:Tdual456}
Assume (\ref{eq:A}) and (\ref{eq:M}). Then, 
for $n\in\mathbb{N}$ and $s,t \in \{1,\dots,n\}$, the following two equalities hold:
\begin{align}
\langle X^{\prime}_s, P_{[1,n]}X^{\prime}_t\rangle 
&= \sum_{\ell=s\vee t}^n a_{\ell-s}^* a_{\ell-t} 
+ \sum_{u=t}^n \langle X^{\prime}_s, P_{[1,n]}^{\bot} \varepsilon_u\rangle a_{u-t},
\label{eq:dual777}\\
\langle X^{\prime}_s, P_{[1,n]}X^{\prime}_t\rangle 
&= \sum_{\ell=1}^{s\wedge t} \tilde{a}_{s-\ell}^* \tilde{a}_{t-\ell} 
+ \sum_{u=1}^t \langle X^{\prime}_s, P_{[1,n]}^{\bot}\tilde{\varepsilon}_{-u}\rangle \tilde{a}_{t-u}.
\label{eq:dual888}
\end{align}
\end{lemma}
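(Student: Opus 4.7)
\textbf{Proof plan for Lemma \ref{lem:Tdual456}.} The starting point is the decomposition
\[
\langle X^{\prime}_s, P_{[1,n]}X^{\prime}_t\rangle
= \langle X^{\prime}_s, X^{\prime}_t\rangle - \langle X^{\prime}_s, P^{\bot}_{[1,n]}X^{\prime}_t\rangle,
\]
so the task reduces to picking a convenient MA representation of $X^{\prime}_t$ in which the ``far'' terms are automatically annihilated by $P^{\bot}_{[1,n]}$, then matching what remains against the formulas.

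For (\ref{eq:dual777}), I would use (\ref{eq:MAdual111}), $X^{\prime}_t = -\sum_{\ell=t}^{\infty} a_{\ell-t}^*\varepsilon_\ell$. The orthonormality $\langle \varepsilon_\ell, \varepsilon_m\rangle = \delta_{\ell m}I_d$ immediately gives $\langle X^{\prime}_s, \varepsilon_\ell\rangle = -a_{\ell-s}^*$ for $\ell\ge s$ and $0$ otherwise, and hence $\langle X^{\prime}_s, X^{\prime}_t\rangle = \sum_{\ell=s\vee t}^{\infty} a_{\ell-s}^* a_{\ell-t}$. The key fact is that $(V^X_{(-\infty,n]})^{\bot} = V^{\varepsilon}_{[n+1,\infty)} \subset (V^X_{[1,n]})^{\bot}$, so $P^{\bot}_{[1,n]}\varepsilon_\ell = \varepsilon_\ell$ for $\ell\ge n+1$. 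Splitting the MA representation at $\ell = n$ therefore yields
\[
P^{\bot}_{[1,n]}X^{\prime}_t
= -\sum_{\ell=t}^n a_{\ell-t}^*\, P^{\bot}_{[1,n]}\varepsilon_\ell
  -\sum_{\ell=n+1}^{\infty} a_{\ell-t}^*\,\varepsilon_\ell,
\]
and taking $\langle X^{\prime}_s, \cdot\rangle$ makes the two tails $\sum_{\ell\ge n+1} a_{\ell-s}^* a_{\ell-t}$ cancel, leaving precisely (\ref{eq:dual777}).

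For (\ref{eq:dual888}), I would use (\ref{eq:MAdual222}) and reindex via $u=-\ell$ to obtain $X^{\prime}_t = -\sum_{u\le t} \tilde{a}_{t-u}^*\,\tilde{\varepsilon}_{-u}$. The analogous orthonormality gives $\langle X^{\prime}_s, \tilde{\varepsilon}_{-u}\rangle = -\tilde{a}_{s-u}^*$ for $u\le s$, hence $\langle X^{\prime}_s, X^{\prime}_t\rangle = \sum_{u\le s\wedge t} \tilde{a}_{s-u}^*\,\tilde{a}_{t-u}$. The crucial orthogonality is now that $\tilde{\varepsilon}_{-u}\in (V^X_{[1,n]})^{\bot}$ for every $u\le 0$: specializing the identity $V^X_{[-n,\infty)} = V^{\tilde{\varepsilon}}_{(-\infty,n]}$ at $n=-1$ gives $V^X_{[1,\infty)} = V^{\tilde{\varepsilon}}_{(-\infty,-1]}$, whose orthogonal complement is $V^{\tilde{\varepsilon}}_{[0,\infty)}$ and contains $(V^X_{[1,n]})^{\bot}$. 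Splitting the sum at $u=0$ and subtracting as before makes the $u\le 0$ tails cancel, and what survives is the finite sum $\sum_{\ell=1}^{s\wedge t}\tilde{a}_{s-\ell}^*\tilde{a}_{t-\ell}$ together with the residual term $\sum_{u=1}^t \langle X^{\prime}_s, P^{\bot}_{[1,n]}\tilde{\varepsilon}_{-u}\rangle\,\tilde{a}_{t-u}$.

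The main obstacle, as I see it, is identifying the correct orthogonality fact in the backward case: unlike the forward side, where the relevant $\varepsilon_\ell$'s lie beyond $n$, the backward innovations $\tilde{\varepsilon}_{-u}$ that fall in $(V^X_{[1,n]})^{\bot}$ are those with nonpositive index $-u\ge 0$, which requires combining the specialization above with the containment $(V^X_{[1,n]})^{\bot}\supset (V^X_{[1,\infty)})^{\bot}$. Everything else is index bookkeeping and elementary manipulation of Gram matrices, and all series converge in $V^d$ thanks to the $\ell^{d\times d}_{2+}$ summability of $\{a_k\}$ and $\{\tilde{a}_k\}$ recorded after (\ref{eq:AR222}).
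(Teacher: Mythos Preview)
Your argument is correct and uses the same ingredients as the paper (the two MA representations of $X'_t$ and the orthogonality of ``far'' innovations to $V^X_{[1,n]}$), but the paper organizes the computation slightly differently: instead of writing $P_{[1,n]}X'_t = X'_t - P_{[1,n]}^{\bot}X'_t$ and cancelling infinite tails, it first inserts the intermediate half-line projection via $P_{[1,n]}X'_t = P_{[1,n]}P_{(-\infty,n]}X'_t$ (resp.\ $P_{[1,n]}P_{[1,\infty)}X'_t$), which truncates the MA sum to a finite sum at the outset and avoids the tail cancellation step. Both routes are equally elementary; the paper's is marginally tidier, yours is more direct. One wording slip: in the backward case you write that $V^{\tilde\varepsilon}_{[0,\infty)}$ ``contains $(V^X_{[1,n]})^{\bot}$'', but the inclusion goes the other way, $V^{\tilde\varepsilon}_{[0,\infty)} = (V^X_{[1,\infty)})^{\bot} \subset (V^X_{[1,n]})^{\bot}$, which is exactly what you need (and what you state correctly in your closing paragraph).
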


\begin{proof}
First, we prove (\ref{eq:dual777}). Since 
$V_{[1,n]}^{X} \subset V_{(-\infty,n]}^{X}$, we have
\[
\begin{aligned}
\langle X^{\prime}_s, P_{[1,n]} X^{\prime}_t\rangle
&=\langle X^{\prime}_s, P_{[1,n]} P_{(-\infty,n]} X^{\prime}_t\rangle\\
&=\langle X^{\prime}_s, P_{(-\infty,n]} X^{\prime}_t\rangle 
- \langle X^{\prime}_s, P_{[1,n]}^{\bot} P_{(-\infty,n]} X^{\prime}_t\rangle.
\end{aligned}
\]
On the other hand, from (\ref{eq:MAdual111}), we have 
$P_{(-\infty,n]} X^{\prime}_t = -\sum_{m=t}^n a_{m-t}^* \varepsilon_{m}$, 
hence
\[
\langle X^{\prime}_s, P_{(-\infty,n]} X^{\prime}_t\rangle 
= \left\langle \sum_{\ell=s}^{\infty} a_{\ell-s}^* \varepsilon_{\ell}, 
\sum_{m=t}^n a_{m-t}^* \varepsilon_{m}\right\rangle
=\sum_{\ell=s\vee t}^n a_{\ell-s}^* a_{\ell-t},
\]
and $\langle X^{\prime}_s, P_{[1,n]}^{\bot} P_{(-\infty,n]} X^{\prime}_t\rangle$ 
is equal to
\[
-\left\langle X^{\prime}_s, P_{[1,n]}^{\bot}\left(\sum_{u=t}^n a_{u-t}^* \varepsilon_{u}\right)\right\rangle
=-\sum_{u=t}^n \langle X^{\prime}_s, P_{[1,n]}^{\bot}\varepsilon_{u}\rangle a_{u-t}.
\]
Combining, we obtain (\ref{eq:dual777}).

Next, we prove (\ref{eq:dual888}). Since 
$V_{[1,n]}^{X} \subset V_{[1,\infty)}^{X}$, we have
\[
\begin{aligned}
\langle X^{\prime}_s, P_{[1,n]} X^{\prime}_t\rangle
&=\langle X^{\prime}_s, P_{[1,n]} P_{[1,\infty)} X^{\prime}_t\rangle\\
&=\langle X^{\prime}_s, P_{[1,\infty)} X^{\prime}_t\rangle 
- \langle X^{\prime}_s, P_{[1,n]}^{\bot} P_{[1,\infty)} X^{\prime}_t\rangle.
\end{aligned}
\]
On the other hand, from (\ref{eq:MAdual222}), we have 
$P_{[1, \infty)} X^{\prime}_t 
= -\sum_{m=1}^{t} \tilde{a}_{t-m}^* \tilde{\varepsilon}_{-m}$, 
hence
\[
\langle X^{\prime}_s, P_{[1,\infty)} X^{\prime}_t\rangle 
= \left\langle \sum_{\ell=-\infty}^{s} \tilde{a}_{s-\ell}^* \tilde{\varepsilon}_{-\ell}, 
\sum_{m=1}^{t} \tilde{a}_{t-m}^* \tilde{\varepsilon}_{-m} \right\rangle
=\sum_{\ell=1}^{s\wedge t} \tilde{a}_{s-\ell}^* \tilde{a}_{t-\ell},
\]
and $\langle X^{\prime}_s, P_{[1,n]}^{\bot} P_{[1,\infty)} X^{\prime}_t\rangle$ 
is equal to
\[
-\left\langle X^{\prime}_s, P_{[1,n]}^{\bot}\left( \sum_{u=1}^{t} \tilde{a}_{t-u}^* \tilde{\varepsilon}_{-u} \right)\right\rangle
=-\sum_{u=1}^t \langle X^{\prime}_s, P_{[1,n]}^{\bot}\tilde{\varepsilon}_{-u}\rangle \tilde{a}_{t-u}.
\]
Combining, we obtain (\ref{eq:dual888}).
\end{proof}

For $n\in\mathbb{N}$ and $u \in \{1,\dots,n\}$, 
we define the sequence $\{W_{n,u}^k\}_{k=1}^{\infty}$ in $V^d$ by
\begin{align*}
W_{n,u}^{2k-1}
&= - P_{[1, \infty)}^{\perp}
(P_{(-\infty, n]}^\perp P_{[1, \infty)}^{\perp})^{k-1} \varepsilon_u,\qquad k \in \N,
\\
W_{n,u}^{2k}
&=
(P_{(-\infty,n]}^{\perp} P_{[1, \infty)}^{\perp})^{k} \varepsilon_u,\qquad k \in \N.
\end{align*}

\begin{lemma}\label{lem:basic-rep123}
We assume (\ref{eq:A}) and (\ref{eq:M}). Then, for $n\in\mathbb{N}$ and $u\in\{1,\dots,n\}$, we have
\begin{equation}
P_{[1,n]}^{\perp} \varepsilon_u = -\sum_{k=1}^{\infty} W_{n,u}^{k},
\label{eq:basic111}
\end{equation}
the sum converging strongly in $V^d$.
\end{lemma}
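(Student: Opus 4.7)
The plan is to write each partial sum $-\sum_{k=1}^{N} W_{n,u}^k$ in closed form and then pass to the limit using von Neumann's alternating projection theorem. Set $P_A := P_{(-\infty,n]}$, $P_B := P_{[1,\infty)}$, $Q := P_B^{\perp}$, $Q' := P_A^{\perp}$, and $\eta_K := (P_A P_B)^K \varepsilon_u$ for $K\ge 0$. By the intersection-of-past-and-future property (\ref{eq:IPF}), $V_{(-\infty,n]}^X \cap V_{[1,\infty)}^X = V_{[1,n]}^X$, so von Neumann's theorem yields the strong convergence $\eta_K \to P_{[1,n]}\varepsilon_u$ in $V^d$ as $K \to \infty$.

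The core of the argument is the pair of partial-sum identities, to be proved by induction on $K\ge 0$:
\begin{equation*}
\varepsilon_u + \sum_{k=1}^{2K} W_{n,u}^k = \eta_K, \qquad
\varepsilon_u + \sum_{k=1}^{2K+1} W_{n,u}^k = P_B\eta_K.
\end{equation*}
The base case $K=0$ is the computation $\varepsilon_u + W_{n,u}^1 = \varepsilon_u - Q\varepsilon_u = P_B\varepsilon_u$. In the inductive step, the passage from the first identity at stage $K$ to the second amounts to checking $W_{n,u}^{2K+1} = -Q\eta_K$, while the passage from the second at stage $K$ to the first at stage $K+1$ amounts to checking $W_{n,u}^{2K+2} = -Q' P_B\eta_K$. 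Substituting the explicit forms $W_{n,u}^{2K+1} = -Q(Q'Q)^K\varepsilon_u$ and $W_{n,u}^{2K+2} = (Q'Q)^{K+1}\varepsilon_u$, and using $Q'\eta_K = 0$, both reductions collapse to the single algebraic identity
\begin{equation*}
Q\eta_K = Q(Q'Q)^K\varepsilon_u, \qquad K \ge 0.
\end{equation*}

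I expect this subsidiary identity to be the main obstacle. I would establish it by a secondary induction on $K$. The base case $K=0$ is trivial. For the step $K-1\Rightarrow K$, the key observation is that $\eta_J \in V_{(-\infty,n]}^X$ for every $J\ge 0$: indeed, $\eta_0 = \varepsilon_u$ belongs to $V_{(-\infty,n]}^{\varepsilon} = V_{(-\infty,n]}^X$ because $u\le n$, and for $J\ge 1$ the outer projection $P_A$ in $\eta_J = P_A P_B\eta_{J-1}$ forces the conclusion. Hence $P_A\eta_{K-1} = \eta_{K-1}$, and one expands
\begin{equation*}
Q\eta_K \,=\, Q P_A P_B\eta_{K-1} \,=\, Q\eta_{K-1} - QP_A Q\eta_{K-1},
\end{equation*}
while on the other side
\begin{equation*}
Q(Q'Q)^K\varepsilon_u \,=\, Q(I-P_A)Q(Q'Q)^{K-1}\varepsilon_u \,=\, Q(Q'Q)^{K-1}\varepsilon_u - QP_A Q(Q'Q)^{K-1}\varepsilon_u,
\end{equation*}
and the two expressions agree by the inductive hypothesis applied to the first and third terms.

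Having the two partial-sum identities, letting $K\to\infty$ completes the proof. The even partial sums $\eta_K - \varepsilon_u$ converge in $V^d$ to $P_{[1,n]}\varepsilon_u - \varepsilon_u = -P_{[1,n]}^{\perp}\varepsilon_u$ by von Neumann, and the odd partial sums $P_B\eta_K - \varepsilon_u$ converge to the same limit by continuity of $P_B$ together with $P_B P_{[1,n]}\varepsilon_u = P_{[1,n]}\varepsilon_u$ (since $V_{[1,n]}^X \subseteq V_{[1,\infty)}^X$). Therefore $\sum_{k=1}^{\infty} W_{n,u}^k$ converges strongly in $V^d$ to $-P_{[1,n]}^{\perp}\varepsilon_u$, which is (\ref{eq:basic111}).
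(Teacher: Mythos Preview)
Your argument is correct. The paper's own proof consists of a single citation: since $\varepsilon_u\in V_{(-\infty,n]}^X$, the identity follows from the intersection-of-past-and-future property (\ref{eq:IPF}) together with Theorem~3.2 in \cite{IKP2}. That external theorem is precisely an alternating-projection expansion of $P_{[1,n]}^{\perp}$ in terms of $P_{(-\infty,n]}^{\perp}$ and $P_{[1,\infty)}^{\perp}$, so your proof is not a genuinely different route but rather a self-contained unpacking of the cited result: you rederive the needed instance of Theorem~3.2 of \cite{IKP2} directly from von~Neumann's alternating projection theorem via the telescoping identities $\varepsilon_u+\sum_{k=1}^{2K}W_{n,u}^k=(P_AP_B)^K\varepsilon_u$ and $\varepsilon_u+\sum_{k=1}^{2K+1}W_{n,u}^k=P_B(P_AP_B)^K\varepsilon_u$. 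The advantage of your version is that it is complete without external references; the paper's version is shorter because the work was done elsewhere.
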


\begin{proof}
Since $\varepsilon_u$ is in $V_{(-\infty,n]}^X$, (\ref{eq:basic111}) follows
from (\ref{eq:IPF}) and Theorem 3.2 in \cite{IKP2}. 
\end{proof}

\begin{proposition}\label{prop:forward638}
We assume (\ref{eq:A}) and (\ref{eq:M}). Then, for $n\in\mathbb{N}$, $u\in\{1,\dots,n\}$ and $k\in\mathbb{N}$, we have
\begin{align}
W_{n,u}^{2k-1}
&=\sum_{\ell=0}^\infty b_{n,u,\ell}^{2k-1} \tilde{\varepsilon}_{\ell},
\label{eq:expan111}\\
W_{n,u}^{2k}
&=\sum_{\ell=0}^\infty b_{n,u,\ell}^{2k} \varepsilon_{n+1+\ell}.
\label{eq:expan222}
\end{align}
\end{proposition}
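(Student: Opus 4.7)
The plan is to prove the two expansions simultaneously by induction on $k$, exploiting the recursive structure $W_{n,u}^{2k} = -P_{(-\infty,n]}^{\perp} W_{n,u}^{2k-1}$ and $W_{n,u}^{2(k+1)-1} = -P_{[1,\infty)}^{\perp} W_{n,u}^{2k}$, which follow directly from the definitions of $W_{n,u}^{k}$. The two projection formulas (\ref{eq:proj555}) and (\ref{eq:proj666}) are exactly the analytic engine that converts each step of the induction into the algebraic recursion (\ref{eq:recurs123}) defining $\{b_{n,u,\ell}^k\}$. The discussion between (\ref{eq:proj666}) and the statement of the proposition already guarantees that every sequence produced lies in $\ell_{2+}^{d\times d}$, so all infinite sums appearing below converge strongly in $V^d$ thanks to the orthonormality of $\{\varepsilon_\ell\}$ and $\{\tilde{\varepsilon}_\ell\}$.

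For the base case ($k=1$ of (\ref{eq:expan111})), I would compute $W_{n,u}^1 = -P_{[1,\infty)}^{\perp}\varepsilon_u$ directly. Since $(V_{[1,\infty)}^X)^{\perp} = V_{[0,\infty)}^{\tilde{\varepsilon}}$ (obtained from the identity $(V_{[-n,\infty)}^X)^{\perp} = V_{[n+1,\infty)}^{\tilde{\varepsilon}}$ with $n=-1$) and $\{\tilde{\varepsilon}_{\ell}\}_{\ell\ge 0}$ is orthonormal, the Fourier expansion gives
\[
P_{[1,\infty)}^{\perp}\varepsilon_u
= \sum_{\ell=0}^{\infty}\langle \varepsilon_u,\tilde{\varepsilon}_\ell\rangle\,\tilde{\varepsilon}_\ell
= -\sum_{\ell=0}^{\infty}\beta_{u+\ell}\,\tilde{\varepsilon}_\ell,
\]
where the last identity uses (\ref{eq:beta234}). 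Hence $W_{n,u}^1 = \sum_{\ell=0}^{\infty}\beta_{u+\ell}\tilde{\varepsilon}_\ell = \sum_{\ell=0}^{\infty} b_{n,u,\ell}^1 \tilde{\varepsilon}_\ell$, matching the definition $b_{n,u,\ell}^1=\beta_{u+\ell}$ in (\ref{eq:recurs123}).

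For the inductive step, assume (\ref{eq:expan111}) holds at index $k$. Apply $-P_{(-\infty,n]}^{\perp}$ to both sides of (\ref{eq:expan111}) and invoke (\ref{eq:proj666}) with $s_\ell = b_{n,u,\ell}^{2k-1}$:
\[
W_{n,u}^{2k} = -P_{(-\infty,n]}^{\perp}\!\left(\sum_{\ell=0}^\infty b_{n,u,\ell}^{2k-1}\tilde{\varepsilon}_\ell\right)
= \sum_{\ell=0}^\infty\!\left(\sum_{m=0}^\infty b_{n,u,m}^{2k-1}\beta_{n+1+m+\ell}^*\right)\varepsilon_{n+1+\ell},
\]
which equals $\sum_{\ell=0}^\infty b_{n,u,\ell}^{2k}\varepsilon_{n+1+\ell}$ by the second line of (\ref{eq:recurs123}). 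This proves (\ref{eq:expan222}) at index $k$. Then, assuming (\ref{eq:expan222}) at index $k$, apply $-P_{[1,\infty)}^{\perp}$ and use (\ref{eq:proj555}) with $s_\ell = b_{n,u,\ell}^{2k}$ to obtain (\ref{eq:expan111}) at index $k+1$, using the third line of (\ref{eq:recurs123}).

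There is no genuinely hard step here; the proof is essentially a bookkeeping exercise matching the alternating projection procedure defining $W_{n,u}^k$ against the alternating recursion defining $b_{n,u,\ell}^k$. The only place one must be careful is the base case, where $\varepsilon_u$ is not of the form $\sum_{\ell\ge 0} s_\ell \varepsilon_{n+1+\ell}$, so (\ref{eq:proj555}) does not apply directly and one instead computes the Fourier expansion of $P_{[1,\infty)}^{\perp}\varepsilon_u$ in the orthonormal basis $\{\tilde{\varepsilon}_\ell\}_{\ell\ge 0}$ by hand using (\ref{eq:beta234}); from the $k=1$ step of (\ref{eq:expan111}) onwards, (\ref{eq:proj555}) and (\ref{eq:proj666}) alternate to propagate the induction mechanically.
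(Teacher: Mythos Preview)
Your proposal is correct and follows essentially the same approach as the paper's own proof: induction on $k$, with the base case computed via the Fourier expansion of $P_{[1,\infty)}^{\perp}\varepsilon_u$ in the orthonormal basis $\{\tilde{\varepsilon}_\ell\}_{\ell\ge 0}$ using (\ref{eq:beta234}), and the inductive step carried by alternating applications of (\ref{eq:proj666}) and (\ref{eq:proj555}) matched against the recursion (\ref{eq:recurs123}).
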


\begin{proof}
Note that, from the definition of $W_{n,u}^k$,
\[
W_{n,u}^{2k+1}=-P_{[1,\infty)}^\perp W_{n,u}^{2k},
\qquad
W_{n,u}^{2k+2}=-P_{(-\infty,n]}^\perp W_{n,u}^{2k+1}.
\]
We prove (\ref{eq:expan111}) and (\ref{eq:expan222}) by induction. 
First, by (\ref{eq:beta234}), we have
\[
W_{n,u}^1=-P_{[1, \infty)}^{\perp} \varepsilon_u 
= -\sum_{\ell=0}^{\infty} \langle \varepsilon_u, \tilde{\varepsilon}_{\ell}\rangle \tilde{\varepsilon}_{\ell}
=\sum_{\ell=0}^\infty \beta_{u+\ell} \tilde{\varepsilon}_{\ell}
=
\sum_{\ell=0}^\infty b_{n,u,\ell}^1 \tilde{\varepsilon}_{\ell}.
\]
For $k \in \N$, assume that
$W_{n,u}^{2k-1}=\sum_{\ell=0}^\infty b_{n,u,\ell}^{2k-1} \tilde{\varepsilon}_{\ell}$.
Then, by (\ref{eq:proj666}),
\[
\begin{aligned}
W_{n,u}^{2k}
&=-P_{(-\infty,n]}^{\perp}
\left(\sum_{\ell=0}^{\infty} b_{n,u,\ell}^{2k-1} \tilde{\varepsilon}_{\ell}\right)
=\sum_{\ell=0}^{\infty}
\left(\sum_{m=0}^\infty b_{n,u,m}^{2k-1} \beta_{n+1+m+\ell}^*
\right) \varepsilon_{n+1+\ell}\\
&=\sum_{\ell=0}^\infty b_{n,u,\ell}^{2k} \varepsilon_{n+1+\ell},
\end{aligned}
\]
and, by (\ref{eq:proj555}),
\[
\begin{aligned}
W_{n,u}^{2k+1}
&=-P_{[1, \infty)}^{\perp}
\left(\sum_{\ell=0}^\infty b_{n,u,\ell}^{2k} \varepsilon_{n+1+\ell}\right)
=\sum_{\ell=0}^{\infty}
\left(\sum_{m=0}^{\infty} b_{n,u,m}^{2k} \beta_{n+1+m+\ell}
\right) \tilde{\varepsilon}_{\ell}\\
&=\sum_{\ell=0}^{\infty} b_{n,u,\ell}^{2k+1} \tilde{\varepsilon}_{\ell}.
\end{aligned}
\]
Thus (\ref{eq:expan111}) and (\ref{eq:expan222}) follow.
\end{proof}

For $n\in\mathbb{N}$ and $u \in \{1,\dots,n\}$, 
we define the sequence $\{\tilde{W}_{n,u}^k\}_{k=1}^{\infty}$ in
$V^d$ by
\begin{align*}
\tilde{W}_{n,u}^{2k-1}
&= - P_{(-\infty, n]}^\perp
(P_{[1, \infty)}^{\perp} P_{(-\infty, n]}^\perp)^{k-1} \tilde{\varepsilon}_{-u},
\qquad k \in \N,
\\
\tilde{W}_{n,u}^{2k}
&=
(P_{[1, \infty)}^{\perp} P_{(-\infty,n]}^{\perp})^{k} \tilde{\varepsilon}_{-u},
\qquad k \in \N.
\end{align*}

\begin{lemma}\label{lem:basic-rep456}
We assume (\ref{eq:A}) and (\ref{eq:M}). Then, for $n\in\mathbb{N}$ and $u\in\{1,\dots,n\}$, we have
\begin{equation}
P_{[1,n]}^{\perp} \tilde{\varepsilon}_{-u} = -\sum_{k=1}^{\infty} \tilde{W}_{n,u}^{k},
\label{eq:basic222}
\end{equation}
the sum converging strongly in $V^d$.
\end{lemma}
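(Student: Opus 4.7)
The plan is to mirror the very short proof of Lemma \ref{lem:basic-rep123}. The forward lemma exploited the fact that $\varepsilon_u\in V_{(-\infty,n]}^X$ and then invoked the alternating-projection expansion (Theorem 3.2 of \cite{IKP2}) relative to the pair of closed subspaces $V_{(-\infty,n]}^X$ and $V_{[1,\infty)}^X$, whose intersection is $V_{[1,n]}^X$ by the IPF property (\ref{eq:IPF}). For the present lemma I would run exactly the same argument with the roles of the two subspaces interchanged.

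First I would verify the analogue of the membership condition: from $V_{[-n,\infty)}^X=V_{(-\infty,n]}^{\tilde{\varepsilon}}$ (stated in Section \ref{sec:3}), taking $n=u\ge 1$ gives $\tilde{\varepsilon}_{-u}\in V_{[u,\infty)}^X\subset V_{[1,\infty)}^X$. Thus $\tilde{\varepsilon}_{-u}$ sits in the ``other'' subspace, precisely as $\varepsilon_u$ sat in $V_{(-\infty,n]}^X$ in Lemma \ref{lem:basic-rep123}.

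Next, the IPF relation (\ref{eq:IPF}) is symmetric in the two subspaces, so Theorem 3.2 in \cite{IKP2} applies equally well starting from an element of $V_{[1,\infty)}^X$ and alternating the projections in the reverse order: the expansion of $P_{[1,n]}^{\perp}\tilde{\varepsilon}_{-u}$ begins with $-P_{(-\infty,n]}^{\perp}\tilde{\varepsilon}_{-u}$ rather than with $-P_{[1,\infty)}^{\perp}\varepsilon_u$, and then alternates. This is exactly the definition of the $\tilde{W}_{n,u}^{k}$, so the identity (\ref{eq:basic222}) drops out together with strong convergence in $V^d$.

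I do not expect a genuine obstacle; the only thing to be careful about is matching signs in the definition of $\tilde{W}_{n,u}^{2k-1}$ and $\tilde{W}_{n,u}^{2k}$ with those coming out of the telescoping identity $P_{[1,n]}^{\perp}=I-P_{[1,n]}$ written via the alternating projections of Theorem 3.2 in \cite{IKP2}. Once the indexing is checked, the proof is a single sentence: since $\tilde{\varepsilon}_{-u}\in V_{[1,\infty)}^{X}$, (\ref{eq:basic222}) follows from (\ref{eq:IPF}) and Theorem 3.2 in \cite{IKP2} with the roles of $V_{(-\infty,n]}^{X}$ and $V_{[1,\infty)}^{X}$ exchanged.
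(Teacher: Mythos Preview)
Your proposal is correct and matches the paper's own proof essentially verbatim: the paper also just observes that $\tilde{\varepsilon}_{-u}\in V_{[1,\infty)}^X$ and then invokes (\ref{eq:IPF}) together with Theorem~3.2 in \cite{IKP2}. One tiny slip: to land in $V_{[u,\infty)}^X$ you should substitute $-u$ (not $u$) for $n$ in the identity $V_{[-n,\infty)}^X=V_{(-\infty,n]}^{\tilde{\varepsilon}}$, but this does not affect the argument.
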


\begin{proof}
Since $\tilde{\varepsilon}_{-u}$ is in $V_{[1,\infty)}^X$, (\ref{eq:basic222}) follows
from (\ref{eq:IPF}) and Theorem 3.2 in \cite{IKP2}. 
\end{proof}

\begin{proposition}\label{prop:til-forward638}
We assume (\ref{eq:A}) and (\ref{eq:M}). 
Then, for $n\in\mathbb{N}$, $u\in\{1,\dots,n\}$ and $k\in\mathbb{N}$, we have
\begin{align}
\tilde{W}_{n,u}^{2k-1}
&=\sum_{\ell=0}^\infty \tilde{b}_{n,u,\ell}^{2k-1} \varepsilon_{n+1+\ell},
\label{eq:til-expan111}\\
\tilde{W}_{n,u}^{2k}
&=\sum_{\ell=0}^\infty \tilde{b}_{n,u,\ell}^{2k} \tilde{\varepsilon}_{\ell}.
\label{eq:til-expan222}
\end{align}
\end{proposition}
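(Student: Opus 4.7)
The plan is to mirror the proof of Proposition \ref{prop:forward638} exactly, swapping the roles of the two projections and the two innovation bases. First I would record the simple recursion consequences of the definition, namely
\[
\tilde{W}_{n,u}^{2k+1} = -P_{(-\infty,n]}^{\perp} \tilde{W}_{n,u}^{2k},
\qquad
\tilde{W}_{n,u}^{2k+2} = -P_{[1,\infty)}^{\perp} \tilde{W}_{n,u}^{2k+1},
\]
which follow from the definitions together with the idempotence of orthogonal projections. These let us carry out an induction on $k$.

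For the base case, I would compute $\tilde{W}_{n,u}^1 = -P_{(-\infty,n]}^{\perp}\tilde{\varepsilon}_{-u}$ using the identity $(V_{(-\infty,n]}^X)^{\perp} = V_{[n+1,\infty)}^{\varepsilon}$ recorded earlier and the fact that $\{\varepsilon_{n+1+\ell}\}_{\ell\ge 0}$ is an orthonormal basis of that subspace. Expanding via (\ref{eq:beta234}) in the form $\langle \tilde{\varepsilon}_{-u}, \varepsilon_{n+1+\ell}\rangle = -\beta_{n+1-u+\ell}^*$ immediately produces
\[
\tilde{W}_{n,u}^1 = \sum_{\ell=0}^{\infty} \beta_{n+1-u+\ell}^{*} \varepsilon_{n+1+\ell}
= \sum_{\ell=0}^{\infty} \tilde{b}_{n,u,\ell}^{1} \varepsilon_{n+1+\ell},
\]
which matches (\ref{eq:til-expan111}) with $k=1$.

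For the inductive step, assuming (\ref{eq:til-expan111}) holds for some $k$, I would apply the projection formula (\ref{eq:proj555}) (with $s_\ell := \tilde{b}_{n,u,\ell}^{2k-1}$) to obtain
\[
\tilde{W}_{n,u}^{2k} = -P_{[1,\infty)}^{\perp}\left(\sum_{\ell=0}^{\infty} \tilde{b}_{n,u,\ell}^{2k-1}\varepsilon_{n+1+\ell}\right)
= \sum_{\ell=0}^{\infty} \left(\sum_{m=0}^{\infty} \tilde{b}_{n,u,m}^{2k-1}\beta_{n+1+m+\ell}\right) \tilde{\varepsilon}_{\ell},
\]
and the inner sum is precisely $\tilde{b}_{n,u,\ell}^{2k}$ by the recursion (\ref{eq:til-recurs123}), establishing (\ref{eq:til-expan222}). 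Then I would apply (\ref{eq:proj666}) to $\tilde{W}_{n,u}^{2k}$ to get $\tilde{W}_{n,u}^{2k+1}$ and identify the coefficients with $\tilde{b}_{n,u,\ell}^{2k+1}$ in the same way. The $\ell_{2+}^{d\times d}$-summability needed to invoke (\ref{eq:proj555})--(\ref{eq:proj666}) at each step was already noted after those formulas, so no additional convergence issue arises.

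I do not expect any genuine obstacle: the argument is a mechanical dual of Proposition \ref{prop:forward638}, the only real bookkeeping being the sign/transpose conventions in (\ref{eq:beta234}) and the fact that the base case here uses $\beta_{n+1-u+\ell}^{*}$ rather than $\beta_{u+\ell}$, matching the first line of (\ref{eq:til-recurs123}). Everything else is verbatim symmetric to the previous proposition's proof.
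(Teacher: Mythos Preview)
Your proposal is correct and follows essentially the same approach as the paper's own proof: record the one-step recursions for $\tilde{W}_{n,u}^k$, handle the base case via (\ref{eq:beta234}), and then induct using (\ref{eq:proj555}) and (\ref{eq:proj666}) to identify the coefficients with those of (\ref{eq:til-recurs123}). The only cosmetic difference is that the paper does not mention idempotence of the projections (the recursions follow directly from the definitions), but this does not affect the argument.
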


\begin{proof}
Note that, from the definition of $\tilde{W}_{n,u}^k$,
\[
\tilde{W}_{n,u}^{2k+1}=-P_{(-\infty,n]}^\perp \tilde{W}_{n,u}^{2k},
\qquad
\tilde{W}_{n,u}^{2k+2}=- P_{[1,\infty)}^\perp \tilde{W}_{n,u}^{2k+1}.
\]
We prove (\ref{eq:til-expan111}) and (\ref{eq:til-expan222}) by induction. 
First, by (\ref{eq:beta234}), we have
\[
\begin{aligned}
\tilde{W}_{n,u}^1&= - P_{(-\infty,n]}^\perp \tilde{\varepsilon}_{-u} 
= -\sum_{\ell=0}^{\infty} \langle \tilde{\varepsilon}_{-u}, \varepsilon_{n+1+\ell}\rangle \varepsilon_{n+1+\ell}\\
&=\sum_{\ell=0}^\infty \beta_{n+1-u+\ell}^* \varepsilon_{n+1+\ell}
=
\sum_{\ell=0}^\infty \tilde{b}_{n,u,\ell}^1 \varepsilon_{n+1+\ell}.
\end{aligned}
\]
For $k \in \N$, assume that
$\tilde{W}_{n,u}^{2k-1}=\sum_{\ell=0}^\infty \tilde{b}_{n,u,\ell}^{2k-1} \varepsilon_{n+1+\ell}$.
Then, by (\ref{eq:proj555}),
\[
\begin{aligned}
\tilde{W}_{n,u}^{2k}
&=-P_{[1,\infty)}^\perp
\left(\sum_{\ell=0}^{\infty} \tilde{b}_{n,u,\ell}^{2k-1} \varepsilon_{n+1+\ell}\right)
=\sum_{\ell=0}^{\infty}
\left(\sum_{m=0}^\infty \tilde{b}_{n,u,m}^{2k-1} \beta_{n+1+m+\ell}
\right) \tilde{\varepsilon}_l\\
&=\sum_{\ell=0}^\infty \tilde{b}_{n,u,\ell}^{2k} \tilde{\varepsilon}_{\ell},
\end{aligned}
\]
and, by (\ref{eq:proj666}),
\[
\begin{aligned}
\tilde{W}_{n,u}^{2k+1}
&=-P_{(\infty,n]}^{\perp}
\left(\sum_{\ell=0}^\infty \tilde{b}_{n,u,\ell}^{2k} \tilde{\varepsilon}_{\ell}\right)
=\sum_{\ell=0}^{\infty}
\left(\sum_{m=0}^{\infty} \tilde{b}_{n,u,m}^{2k} \beta_{n+1+m+\ell}^*
\right) \varepsilon_{n+1+\ell}\\
&=\sum_{\ell=0}^{\infty} \tilde{b}_{n,u,\ell}^{2k+1} \varepsilon_{n+1+\ell}.
\end{aligned}
\]
Thus (\ref{eq:til-expan111}) and (\ref{eq:til-expan222}) follow.
\end{proof}

We are ready to prove Theorem \ref{thm:TforXwithM123}.

\begin{proof}
(i)\ For $n\in\N$, $s, u \in \{1,\dots,n\}$ and $k\in\N$, 
we see from (\ref{eq:MAdual111}) and (\ref{eq:til-expan111}) that
\[
\langle X^{\prime}_s, \tilde{W}_{n,u}^{2k-1} \rangle
= - \sum_{\ell=0}^{\infty} a_{n+1-s+\ell}^* (\tilde{b}_{n,u,\ell}^{2k-1})^*,
\]
and from (\ref{eq:MAdual222}) and (\ref{eq:til-expan222}) that
\[
\langle X^{\prime}_s, \tilde{W}_{n,u}^{2k} \rangle
= - \sum_{\ell=0}^{\infty} \tilde{a}_{s+\ell}^* (\tilde{b}_{n,u,\ell}^{2k})^*.
\]
Therefore, by Lemma \ref{lem:basic-rep456}, 
$\langle X^{\prime}_s, P_{[1,n]}^{\perp} \tilde{\varepsilon}_{-u} \rangle$ is equal to
\[
- \sum_{k=1}^{\infty} \langle X^{\prime}_s, \tilde{W}_{n,u}^{k} \rangle
= \sum_{k=1}^{\infty} 
\left\{  
\sum_{\ell=0}^{\infty} \tilde{b}_{n,u,\ell}^{2k-1} a_{n+1-s+\ell}
+
\sum_{\ell=0}^{\infty} \tilde{b}_{n,u,\ell}^{2k}   \tilde{a}_{s+\ell} 
\right\}^*.
\]
The assertion (i) follows from this, Theorem \ref{thm:Tdual123} and Lemma \ref{lem:Tdual456}.

(ii)\ For $n\in\N$, $s, u \in \{1,\dots,n\}$ and $k\in\N$, 
we see from (\ref{eq:MAdual222}) and (\ref{eq:expan111}) that
\[
\langle X^{\prime}_s, W_{n,u}^{2k-1} \rangle
= - \sum_{\ell=0}^{\infty} \tilde{a}_{s+\ell}^* (b_{n,u,\ell}^{2k-1})^*,
\]
and from (\ref{eq:MAdual111}) and (\ref{eq:expan222}) that
\[
\langle X^{\prime}_s, W_{n,u}^{2k} \rangle
= - \sum_{\ell=0}^{\infty} a_{n+1-s+\ell}^* (b_{n,u,\ell}^{2k})^*.
\]
Therefore, by Lemma \ref{lem:basic-rep123}, 
$\langle X^{\prime}_s, P_{[1,n]}^{\perp} \varepsilon_u \rangle$ is equal to 
\[
- \sum_{k=1}^{\infty} \langle X^{\prime}_s, W_{n,u}^{k} \rangle
= \sum_{k=1}^{\infty} 
\left\{  
\sum_{\ell=0}^{\infty} b_{n,u,\ell}^{2k-1} \tilde{a}_{s+\ell}
+
\sum_{\ell=0}^{\infty} b_{n,u,\ell}^{2k}   a_{n+1-s+\ell} 
\right\}^*.
\]
The assertion (ii) follows from this, Theorem \ref{thm:Tdual123} and Lemma \ref{lem:Tdual456}.
\end{proof}


\section{Strong convergence result for Toeplitz systems}\label{sec:4}

In this section, we use Theorem \ref{thm:TforXwithM123} to show a strong convergence 
result for solutions of block Toeplitz systems. 
We assume (\ref{eq:A}) and (\ref{eq:S}). 
Then $w$ is continuous on $\T$ since 
$w(e^{i\theta})=(2\pi)^{-1}\sum_{k\in\Z} e^{ik\theta} \gamma(k)$. 
In particular, (\ref{eq:M}) is also satisfied. 
The conditions (\ref{eq:A}) and (\ref{eq:S}) also imply 
that all of $\{a_k\}$, $\{c_k\}$, $\{\tilde{a}_k\}$ and 
$\{\tilde{c}_k\}$ belong to $\ell_{1+}^{d\times d}$. 
See Theorem 3.3 and (3.3) in \cite{KB18}; see also Theorem 4.1 in \cite{I08}.
In particular, we have 
$h(e^{i\theta})^{-1} = - \sum_{k=0}^{\infty} e^{ik\theta} a_k$ and
$h_{\sharp}(e^{i\theta}) = \tilde{h}(e^{-i\theta})^* = \sum_{k=0}^{\infty} e^{ik\theta} \tilde{c}_k^*$, 
hence, by (\ref{eq:beta-def667}), 
\begin{equation}
\beta_k = \sum_{j=0}^{\infty} a_{j+k} \tilde{c}_j,\qquad k \in \N\cup\{0\}.
\label{eq:beta-a-c111}
\end{equation}

Under (\ref{eq:A}) and (\ref{eq:S}), we define
\[
F(n):=\left(\sum_{j=0}^{\infty}\Vert \tilde{c}_j\Vert\right)
\sum_{\ell=n}^{\infty}\Vert a_{\ell}\Vert, \qquad 
n \in \N\cup\{0\}.
\]
Then $F(n)$ decreases to zero as $n\to\infty$.

We need the next lemma in the proof of Theorem \ref{thm:Bax-conv123} below.

\begin{lemma}\label{lem:estim4.2}
Assume (\ref{eq:A}) and (\ref{eq:S}). Then, for $n, k\in\N$ and $u\in\{1,\dots,n\}$, we have
\begin{equation}
\sum_{\ell=0}^{\infty}
\Vert \tilde{b}^{k}_{n,u,\ell}\Vert\le F(n+1)^{k-1}F(n+1-u).
\label{eq:ac-ineq985}
\end{equation}
\end{lemma}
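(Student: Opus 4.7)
The plan is to prove the inequality by induction on $k$, after first establishing a clean bound on tail sums of $\|\beta_\ell\|$. The key preliminary estimate is that
\[
\sum_{\ell=0}^{\infty}\|\beta_{N+\ell}\|\le F(N),\qquad N\in\N\cup\{0\}.
\]
This follows directly from (\ref{eq:beta-a-c111}): by the submultiplicativity of the operator norm,
\[
\sum_{\ell=0}^{\infty}\|\beta_{N+\ell}\|
\le \sum_{\ell=0}^{\infty}\sum_{j=0}^{\infty}\|a_{j+N+\ell}\|\,\|\tilde{c}_j\|
= \sum_{j=0}^{\infty}\|\tilde{c}_j\|\sum_{i=j+N}^{\infty}\|a_i\|
\le F(N),
\]
where the last inequality uses that $\sum_{i=j+N}^{\infty}\|a_i\|\le\sum_{i=N}^{\infty}\|a_i\|$. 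Note also that $F$ is nonincreasing in $n$, so $F(n+1+m)\le F(n+1)$ for every $m\ge 0$.

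For the base case $k=1$, the definition $\tilde{b}_{n,u,\ell}^1=\beta_{n+1-u+\ell}^*$ together with $\|\beta^*\|=\|\beta\|$ gives
\[
\sum_{\ell=0}^{\infty}\|\tilde{b}_{n,u,\ell}^1\|
=\sum_{\ell=0}^{\infty}\|\beta_{n+1-u+\ell}\|\le F(n+1-u),
\]
which matches the claim since $F(n+1)^{0}=1$.

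For the inductive step, assume (\ref{eq:ac-ineq985}) holds for some $k\ge 1$. The recursion (\ref{eq:til-recurs123}) gives $\tilde{b}_{n,u,\ell}^{k+1}$ as a sum $\sum_m \tilde{b}_{n,u,m}^k\,\beta_{n+1+m+\ell}^{(*)}$ (with or without the adjoint depending on the parity of $k$). In either case,
\[
\sum_{\ell=0}^{\infty}\|\tilde{b}_{n,u,\ell}^{k+1}\|
\le \sum_{m=0}^{\infty}\|\tilde{b}_{n,u,m}^k\|\sum_{\ell=0}^{\infty}\|\beta_{n+1+m+\ell}\|
\le F(n+1)\sum_{m=0}^{\infty}\|\tilde{b}_{n,u,m}^k\|,
\]
where the preliminary bound and monotonicity of $F$ were used. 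Applying the induction hypothesis yields the desired estimate $F(n+1)^{k}F(n+1-u)$.

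There is no real obstacle here — the only substantive point is the initial bound $\sum_\ell\|\beta_{N+\ell}\|\le F(N)$, which is a direct consequence of (\ref{eq:beta-a-c111}); everything else is a routine telescoping induction powered by the monotonicity of $F$.
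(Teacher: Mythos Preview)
Your proof is correct and follows essentially the same approach as the paper: first establish the tail bound $\sum_{\ell\ge 0}\|\beta_{N+\ell}\|\le F(N)$ from (\ref{eq:beta-a-c111}), then run induction on $k$ using the recursion (\ref{eq:til-recurs123}) together with the monotonicity of $F$.
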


\begin{proof}
For $m\in\N$, 
we see from (\ref{eq:beta-a-c111}) that
\[
\sum_{\ell=0}^{\infty} \Vert \beta_{m+\ell}\Vert
\le \sum_{j=0}^{\infty} \Vert \tilde{c}_j\Vert \sum_{\ell=0}^{\infty} \Vert a_{m+j+\ell}\Vert
\le \sum_{j=0}^{\infty} \Vert \tilde{c}_j\Vert \sum_{\ell=m}^{\infty} \Vert a_{\ell}\Vert,
\]
hence
\begin{equation}
\sum_{\ell=0}^{\infty} \Vert \beta_{m+\ell}\Vert \le F(m).
\label{eq:beta-ineq519}
\end{equation}

Let $n\in\N$ and $u\in\{1,\dots,n\}$. We use induction on $k$ to prove (\ref{eq:ac-ineq985}). 
Since $\tilde{b}^1_{n,u,\ell}=\beta_{n+1-u+\ell}^*$, we see from (\ref{eq:beta-ineq519}) that
\[
\sum_{\ell=0}^{\infty} \Vert \tilde{b}^1_{n,u,\ell} \Vert
=\sum_{\ell=0}^{\infty}\Vert \beta_{n+1-u+\ell} \Vert
\le F(n+1-u).
\]
We assume (\ref{eq:ac-ineq985}) for $k\in\N$. 
Then, again by (\ref{eq:beta-ineq519}),
\[
\begin{aligned}
\sum_{\ell=0}^{\infty}
\Vert \tilde{b}^{k+1}_{n,u,\ell} \Vert
&\le \sum_{m=0}^{\infty} \Vert \tilde{b}^{k}_{n,u,m} \Vert 
\sum_{\ell=0}^{\infty} \Vert \beta_{n + 1 + m + \ell}\Vert\\
&\le F(n+1) \sum_{m=0}^{\infty} \Vert \tilde{b}^k_{n,u,m} \Vert \le F(n+1)^{k}F(n+1-u).
\end{aligned}
\]
Thus (\ref{eq:ac-ineq985}) with $k$ replaced by $k+1$ also holds.
\end{proof}

For $\{y_k\}_{k=1}^{\infty} \in \ell_1^{d\times d}(\N)$, the solution 
$Z_{\infty}$ to (\ref{eq:TSinfty183}) with (\ref{eq:Tinfty123}) and (\ref{eq:Rrinfty-314}) is 
given by (\ref{eq:Zzinfty-314}) with
\begin{equation}
z_s = \sum_{t=1}^{\infty} \sum_{\ell = 1}^{s\wedge t} 
\tilde{a}_{s - \ell}^* \tilde{a}_{t - \ell} y_t \in \C^{d\times d}, \qquad s\in\N
\label{eq:z-sequence123}
\end{equation}
(see Remark \ref{rem:1st-term-123} in Section \ref{sec:2}). 
Notice that the sum in (\ref{eq:z-sequence123}) converges absolutely.

\begin{theorem}\label{thm:Bax-conv123}
We assume (\ref{eq:A}) and (\ref{eq:S}). Let $\{y_k\}_{k=1}^{\infty} \in \ell_1^{d\times d}(\N)$. 
Then, for 
$Z_n$ in (\ref{eq:Zz-314})--(\ref{eq:Rr-314}) and 
$Z_{\infty}$ in (\ref{eq:Zzinfty-314})--(\ref{eq:Rrinfty-314}), 
we have (\ref{eq:conv-z-234}).
\end{theorem}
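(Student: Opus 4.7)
The plan is to apply Corollary~\ref{cor:TforXwithM456}(i), together with the identification $(T_{\infty}(w)^{-1})^{s,t} = \sum_{\ell=1}^{s\wedge t}\tilde{a}_{s-\ell}^{*}\tilde{a}_{t-\ell}$ from Remark~\ref{rem:1st-term-123}, to decompose
\[
z_{n,s}-z_s \;=\; -\sum_{t=n+1}^{\infty}\bigl(T_{\infty}(w)^{-1}\bigr)^{s,t}y_t \;+\; \sum_{t=1}^n \sum_{u=1}^s \tilde{a}_{s-u}^{*}\, R_{n,u,t}\, y_t,
\]
where $R_{n,u,t}:=\sum_{k=1}^{\infty}\bigl\{\sum_{\ell\ge 0}\tilde{b}^{2k-1}_{n,u,\ell}a_{n+1-t+\ell}+\sum_{\ell\ge 0}\tilde{b}^{2k}_{n,u,\ell}\tilde{a}_{t+\ell}\bigr\}$. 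For the tail piece, after the substitution $p=s-\ell$, $q=t-\ell$ and an exchange of the $s$- and $\ell$-summations, I would bound $\sum_{s=1}^n \|\mathrm{tail}_s\|\le \|\tilde{a}\|_1^2 \sum_{t>n}\|y_t\|$, which vanishes as $n\to\infty$ because $\{y_t\}\in\ell_1^{d\times d}(\N)$. For the correction piece, using $\sum_{s\ge u}\|\tilde{a}_{s-u}\|\le\|\tilde{a}\|_1$ to factor out a constant in front, the problem reduces to showing
\[
\sum_{u=1}^n \sum_{t=1}^n \|R_{n,u,t}\|\,\|y_t\| \;\longrightarrow\;0.
\]

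To handle this, the strategy is to interchange summations and perform the $u$-sum first. Writing $\Sigma_m:=\sum_{k\ge m}\|\beta_k\|$, I would prove inductively from the recursion~(\ref{eq:til-recurs123}) and the elementary bound $\sum_{u=1}^n\|\beta_{n+1-u+m}\|\le\Sigma_{m+1}$ the pair
\[
\sum_{u=1}^n\|\tilde{b}^1_{n,u,\ell}\|\le\Sigma_{\ell+1},\qquad \sum_{u=1}^n\|\tilde{b}^k_{n,u,\ell}\|\le \|\beta\|_1\,\Sigma_{n+1}^{k-2}\,\Sigma_{n+1+\ell}\quad(k\ge 2).
\]
Here $\{\beta_k\}_{k\ge 0}\in\ell_1$ by (\ref{eq:beta-a-c111}) and the fact that $\{a_k\},\{\tilde{c}_k\}\in\ell_{1+}^{d\times d}$, and $\Sigma_{n+1}\to 0$. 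Summing the resulting geometric series in $\Sigma_{n+1}$ and bounding $\Sigma_{n+1+\ell}\le\Sigma_{n+1}$, the even-index $\tilde{b}$ part of the correction is controlled by $C\Sigma_{n+1}\|\tilde{a}\|_1\|y\|_1\to 0$, while the odd-index part is controlled by $\bigl(\|\beta\|_1+O(\Sigma_{n+1}^2)\bigr)G_n$, with
\[
G_n \;:=\; \sum_{t=1}^n\|y_t\|\sum_{j\ge n+1-t}\|a_j\|.
\]
A standard three-$\epsilon$ argument, splitting the $t$-sum at a threshold $N$ so that $\sum_{t\ge N}\|y_t\|$ and $\sum_{j\ge N}\|a_j\|$ are both small and then letting $n\to\infty$, gives $G_n\to 0$.

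The main technical obstacle will be the $k=1$ term of the odd-index piece. The naive bound coming directly from Lemma~\ref{lem:estim4.2}, namely $\sum_{u=1}^n\|\tilde{b}^1_{n,u,\cdot}\|_1\le \sum_{j=1}^n F(j)$, need not remain bounded as $n\to\infty$: condition~(\ref{eq:S}) yields $\{a_k\}\in\ell_{1+}^{d\times d}$ but does not imply $\sum_j j\|a_j\|<\infty$, so the factor $F(n+1-u)$ cannot be decoupled as an independent $u$-sum. The remedy is exactly to exchange the $u$- and $\ell$-summations \emph{before} invoking the recursion, replacing $F(n+1-u)$ by the sharper $\Sigma_{\ell+1}$ and pairing it with $\|a_{n+1-t+\ell}\|$ so that the estimate closes through $G_n\to 0$. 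Here the summability of the phase-function coefficients $\{\beta_k\}$ themselves, not merely of $\{a_k\}$, is what makes the argument work.
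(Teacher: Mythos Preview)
Your argument is correct and follows the same overall strategy as the paper: expand $z_{n,s}$ via the explicit formula, identify the leading term with $(T_\infty(w)^{-1}Y)_s$, control the tail $\sum_{t>n}$ by $\|\tilde a\|_1^2\sum_{t>n}\|y_t\|$, and estimate the correction term geometrically in $k$ using the recursion (\ref{eq:til-recurs123}).

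There are, however, two genuine organizational differences. First, you start from Corollary~\ref{cor:TforXwithM456}(i) rather than Theorem~\ref{thm:TforXwithM123}(i); in the paper's formula the $u$-sum runs from $1$ to $t$ and the factor $\tilde a_{t-u}$ couples $u$ to $t$, whereas in your version the $u$-sum runs from $1$ to $s$ and, after the $s$-sum has been absorbed into $\|\tilde a\|_1$, $u$ and $t$ become independent. This is what allows you to perform the $u$-sum first and obtain the sharp pointwise-in-$\ell$ bound $\sum_{u=1}^n\|\tilde b^{1}_{n,u,\ell}\|\le\Sigma_{\ell+1}$. Second, for the delicate $k=1$ odd piece, the paper instead sums over $\ell$ first to get $F(n+1-u)$ from Lemma~\ref{lem:estim4.2}, keeps the $t$--$u$ coupling via $\tilde a_{t-u}$, makes the substitution $v=t-u+1$, and appeals to dominated convergence for the resulting double series in $(t,v)$. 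Your route replaces this with the elementary three-$\epsilon$ argument for $G_n=\sum_{t\le n}\|y_t\|\sum_{j\ge n+1-t}\|a_j\|\to 0$. Both arguments exploit the same underlying mechanism---the tail of $\{a_j\}$ paired against the growing truncation---but yours avoids the dominated convergence theorem and works directly with $\|\beta\|_1$ rather than with the auxiliary function $F$. Your remark that $\sum_{u=1}^n F(n+1-u)$ need not be bounded is exactly the reason neither proof can decouple $u$ blindly; the paper circumvents this via the $(t,v)$ change of variables, you via the prior $u$-sum.
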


\begin{proof}
By Theorem \ref{thm:TforXwithM123} (i), we have
\[
\begin{aligned}
z_{n,s} &= \sum_{t=1}^{n} \sum_{\ell = 1}^{s\wedge t} 
\tilde{a}_{s - \ell}^* \tilde{a}_{t - \ell} y_t 
+ \sum_{t=1}^{n} \sum_{u=1}^t 
          \sum_{\ell = 0}^{\infty} a_{n + 1 - s + \ell}^* \beta_{n+1-u+\ell} \tilde{a}_{t-u} y_t\\
&\quad\quad + \sum_{t=1}^{n} \sum_{u=1}^t \sum_{k=1}^{\infty} 
     \left\{ 
          \sum_{\ell = 0}^{\infty} \tilde{b}_{n,u,\ell}^{2k+1} a_{n + 1 - s + \ell} 
       +  \sum_{\ell = 0}^{\infty} \tilde{b}_{n,u,\ell}^{2k}   \tilde{a}_{s + \ell}
     \right\}^* \tilde{a}_{t-u} y_t,
\end{aligned}
\]
hence, by (\ref{eq:z-sequence123}),
$\sum_{s=1}^n \Vert z_{n,s} - z_s\Vert 
\le S_1(n) + S_2(n) + S_3(n) + S_4(n)$, 
where
\begin{align*}
S_1(n) &:= \sum_{t=n+1}^{\infty} \sum_{s=1}^n \sum_{\ell = 1}^{s} 
\Vert \tilde{a}_{s - \ell}\Vert \Vert \tilde{a}_{t - \ell}\Vert \Vert y_t\Vert,\\
S_2(n) &:= \sum_{s=1}^n \sum_{t=1}^{n} \sum_{u=1}^t \sum_{\ell = 0}^{\infty} 
\Vert a_{n + 1 - s + \ell}\Vert \Vert \beta_{n+1-u+\ell}\Vert \Vert \tilde{a}_{t-u}\Vert 
\Vert y_t\Vert,\\
S_3(n) &:=  \sum_{s=1}^n \sum_{t=1}^{n} \sum_{u=1}^t \sum_{k=1}^{\infty} \sum_{\ell = 0}^{\infty} 
\Vert \tilde{b}_{n,u,\ell}^{2k+1}\Vert \Vert a_{n + 1 - s + \ell}\Vert 
\Vert \tilde{a}_{t-u}\Vert \Vert y_t\Vert
\end{align*}
and
\[
S_4(n) = \sum_{s=1}^n \sum_{t=1}^{n} \sum_{u=1}^t \sum_{k=1}^{\infty} \sum_{\ell = 0}^{\infty} 
\Vert \tilde{b}_{n,u,\ell}^{2k}\Vert \Vert \tilde{a}_{s + \ell}\Vert 
\Vert \tilde{a}_{t-u}\Vert \Vert y_t\Vert.
\]

By the change of variables $m=s-\ell+1$, we have
\[
\begin{aligned}
S_1(n) 
&= \sum_{t=n+1}^{\infty} \sum_{s=1}^n \sum_{m = 1}^{s} 
\Vert \tilde{a}_{m - 1}\Vert \Vert \tilde{a}_{t + m -s - 1}\Vert \Vert y_t\Vert\\
&=\sum_{t=n+1}^{\infty} \Vert y_t\Vert \sum_{m = 1}^{n} \Vert \tilde{a}_{m - 1}\Vert \sum_{s=m}^n 
\Vert \tilde{a}_{t + m -s - 1}\Vert\\
&\le \left( \sum_{k = 0}^{\infty} \Vert \tilde{a}_{k}\Vert\right)^2 \sum_{t=n+1}^{\infty} \Vert y_t\Vert 
\quad \to \quad 0,\qquad n\to\infty.
\end{aligned}
\]

By (\ref{eq:ac-ineq985}) with $k=1$ or (\ref{eq:beta-ineq519}), we have
\[
\begin{aligned}
S_2(n) &= \sum_{t=1}^{n} \sum_{u=1}^t 
\Vert \tilde{a}_{t-u}\Vert  \Vert y_t\Vert 
\sum_{\ell = 0}^{\infty} \Vert \beta_{n+1-u+\ell}\Vert 
\sum_{s=1}^n \Vert a_{n + 1 - s + \ell} \Vert\\
&\le \left( \sum_{s=1}^{\infty} \Vert a_{s}\Vert\right) 
\sum_{t=1}^{n} \sum_{u=1}^t 
\Vert \tilde{a}_{t-u}\Vert  \Vert y_t\Vert F(n+1-u).
\end{aligned}
\]
Furthermore, by the change of variables $v=t-u+1$, we obtain
\[
\begin{aligned}
\sum_{t=1}^{n} \sum_{u=1}^t 
\Vert \tilde{a}_{t-u}\Vert  \Vert y_t\Vert F(n+1-u)
&=\sum_{t=1}^{\infty} \sum_{u=1}^t 
\Vert \tilde{a}_{t-u}\Vert  \Vert y_t\Vert 1_{[0,n]}(t)F(n+1-u)\\
&=\sum_{t=1}^{\infty} \sum_{v=1}^t 
\Vert \tilde{a}_{v-1}\Vert  \Vert y_t\Vert 1_{[0,n]}(t)F(n-t+v)\\
&\le \sum_{t=1}^{\infty} \sum_{v=1}^{\infty} 
\Vert \tilde{a}_{v-1}\Vert  \Vert y_t\Vert 1_{[0,n]}(t)F(n-t+v).
\end{aligned}
\]
Since
\begin{gather*}
\lim_{n\to\infty} \Vert \tilde{a}_{v-1}\Vert  \Vert y_t\Vert 1_{[0,n]}(t)F(n-t+v)
=0,\qquad t, v\in\N,\\
\Vert \tilde{a}_{v-1}\Vert  \Vert y_t\Vert 1_{[0,n]}(t)F(n-t+v)
\le F(1)\Vert \tilde{a}_{v-1}\Vert  \Vert y_t\Vert,\qquad t, v\in\N,\\
\sum_{t=1}^{\infty} \sum_{v=1}^{\infty} 
\Vert \tilde{a}_{v-1}\Vert  \Vert y_t\Vert <\infty,
\end{gather*}
the dominated convergence theorem yields
\[
\lim_{n\to\infty} \sum_{t=1}^{\infty} \sum_{v=1}^{\infty} 
\Vert \tilde{a}_{v-1}\Vert  \Vert y_t\Vert 1_{[0,n]}(t)F(n-t+v) = 0,
\]
hence $\lim_{n\to\infty} S_2(n) = 0$.

Choose $N\in\N$ such that $F(N+1)<1$. Then, by Lemma \ref{lem:estim4.2}, we have, 
for $n\ge N$,
\[
\begin{aligned}
S_3(n)&=\sum_{t=1}^{n} \sum_{u=1}^t 
\Vert \tilde{a}_{t-u}\Vert \Vert y_t\Vert 
\sum_{k=1}^{\infty} \sum_{\ell = 0}^{\infty} \Vert \tilde{b}_{n,u,\ell}^{2k+1}\Vert 
\sum_{s=1}^n \Vert a_{n + 1 - s + \ell}\Vert\\
&\le F(1)\left( \sum_{s=1}^{\infty} \Vert a_{s}\Vert\right)
\sum_{t=1}^{n} \Vert y_t\Vert \sum_{u=1}^t 
\Vert \tilde{a}_{t-u}\Vert 
\sum_{k=1}^{\infty} F(n+1)^{2k}\\
&\le F(1)\left( \sum_{s=1}^{\infty} \Vert a_{s}\Vert\right)
\left( \sum_{u=0}^{\infty} \Vert \tilde{a}_{u}\Vert\right)
\left(\sum_{t=1}^{\infty} \Vert y_t\Vert\right) 
\frac{F(n+1)^2}{1-F(n+1)^2}.
\end{aligned}
\]
Thus $\lim_{n\to\infty}S_3(n)=0$. Similarly, we have, for $n\ge N$,
\[
S_4(n)
\le F(1)
\left( \sum_{s=0}^{\infty} \Vert \tilde{a}_{s}\Vert\right)^2
\left(\sum_{t=1}^{\infty} \Vert y_t\Vert\right) 
\frac{F(n+1)}{1-F(n+1)^2},
\]
hence $\lim_{n\to\infty}S_4(n)=0$.

Combining, we obtain (\ref{eq:conv-z-234}).
\end{proof}


\section{Closed-form formulas}\label{sec:5}

In this section, we use Theorem \ref{thm:TforXwithM123} to derive 
closed-form formulas for $T_n(w)^{-1}$ with rational symbol $w$ that corresponds to a 
$d$-variate ARMA process.
We assume that the symbol $w$ of $T_n(w)$ is of the form (\ref{eq:farima529}) 
with $h:\T\to \C^{d\times d}$ satisfying (\ref{eq:C}).
Then $h$ is an outer function in $H_2^{d\times d}(\T)$, and 
another outer function $h_{\sharp}\in H_2^{d\times d}(\T)$ that appears in (\ref{eq:decomp888}) 
also satisfies (\ref{eq:C}); see Section 6.2 in \cite{IKP2}. 
Notice that (\ref{eq:farima529}) with (\ref{eq:C}) implies (\ref{eq:A}) and (\ref{eq:M}).

We can write $h(z)^{-1}$ in the form
\begin{equation}
h(z)^{-1} = - \rho_{0,0} - \sum_{{\mu}=1}^{K} \sum_{j=1}^{m_{\mu}} \frac{1}{(1-\barp_{\mu}z)^j}\rho_{{\mu}, j} 
- \sum_{j=1}^{m_0} z^j \rho_{0,j},
\label{eq:hinverse162}
\end{equation}
where
\begin{equation}
\left\{
\begin{aligned}
&K\in\N\cup\{0\}, \qquad m_{\mu}\in\N, \quad \mu \in \{1,\dots,K\}, \qquad m_0\in\N\cup\{0\},\\
&p_{\mu}\in\D\setminus \{0\}, \quad \mu \in \{1,\dots,K\}, \qquad p_{\mu}\neq p_{\nu}, \quad \mu\neq \nu,\\
&\rho_{{\mu}, j}\in\C^{d\times d}, \quad \mu \in \{0,\dots,K\},\ j \in \{1,\dots,m_{\mu}\}, 
\qquad \rho_{0,0} \in\C^{d\times d},\\
&\rho_{{\mu},m_{\mu}}\neq 0, \quad \mu \in \{1,\dots,K\},\\
&\rho_{0,m_0}\neq 0\quad \mbox{if}\ \ m_0\ge 1.
\end{aligned}
\right.
\label{eq:hinverse163}
\end{equation}
Here the convention $\sum_{k=1}^0=0$ is adopted in the sums on the right-hand side of (\ref{eq:hinverse162}).
For example, if $m_0=0$, then
\[
h(z)^{-1} = - \rho_{0,0} - \sum_{{\mu}=1}^{K} \sum_{j=1}^{m_{\mu}} \frac{1}{(1-\barp_{\mu}z)^j}\rho_{{\mu}, j},
\]
while, if $K=0$, then
\begin{equation}
h(z)^{-1} = - \rho_{0,0} - \sum_{j=1}^{m_0} z^j \rho_{0,j}
\label{eq:K000}
\end{equation}
and the corresponding stationary process $\{X_k\}$ is a $d$-variate AR$(m_0)$ process.

\begin{remark}
It should be noticed that the expression (\ref{eq:hinverse162}) with 
(\ref{eq:hinverse163}) is uniquely determined, 
up to a constant unitary factor, from $\{X_k\}$ satisfying (\ref{eq:farima529}) with (\ref{eq:C}) 
since so is $h$ in the factorization (\ref{eq:farima529}) with (\ref{eq:C}) (see Section \ref{sec:2}). 
Suppose that we start with a $d$-variate, causal and invertible ARMA process $\{X_k\}$ in the sense 
of \cite{BD}, that is, a $\C^d$-valued, centered, weakly stationary process described by 
the ARMA equation
\[
\Phi(B) X_n = \Psi(B) \xi_n,\qquad n\in\Z,
\]
where, for $r, s\in\N\cup\{0\}$ and $\Phi_i, \Psi_j\in\C^{d\times d}\ (i=1,\dots,r,\ j=1,\dots,s)$, 
\[
\Phi(z) = I_d - z\Phi_1 - \cdots - z^r\Phi_r 
\quad \mbox{and} \quad 
\Psi(z) = I_d - z\Psi_1 - \cdots - z^s\Psi_s
\]
are $\C^{d\times d}$-valued polynomials satisfying 
$\det \Phi(z)\neq 0$ and $\det \Psi(z)\neq 0$ on $\overline{\D}$, 
$B$ is the backward shift operator defined by $B X_m=X_{m-1}$, 
and $\{\xi_k : k\in\Z\}$ is a $d$-variate white noise, that is, a $d$-variate, centered process 
such that $E[\xi_n \xi_m^*]=\delta_{nm}V$ for some positive-definite $V\in\C^{d\times d}$. 
Notice that the pair $(\Phi(z),\Psi(z))$ is not uniquely determined from $\{X_k\}$; for example, 
we can replace $(\Phi(z),\Psi(z))$ by $((2-z)\Phi(z),(2-z)\Psi(z))$. 
However, if we put $h(z) = \Phi(z)^{-1} \Psi(z) V^{1/2}$, 
then $h$ is an outer function belonging to $H_2^{d\times d}(\T)$ and satisfies 
(\ref{eq:farima529}) for the spectral density $w$ of $\{X_k\}$. 
Therefore, $h$ is uniquely determined, 
up to a constant unitary factor, from $\{X_k\}$. 
In particular, the expression (\ref{eq:hinverse162}) with (\ref{eq:hinverse163}) 
for $h$ is also uniquely determined, 
up to a constant unitary factor, from $\{X_k\}$. 
From these observations and the results in \cite{I20} and this paper, 
we are led to the idea of parameterizing the ARMA processes by 
the expression (\ref{eq:hinverse162}) with (\ref{eq:hinverse163}) (see Remark 8 in \cite{I20}). 
This point will be discussed in future work.
\end{remark}

By Theorem 2 in \cite{I20}, 
$h_{\sharp}^{-1}$ has the same $m_0$ and the same poles 
with the same multiplicities as $h^{-1}$, 
that is, 
for $m_0$, $K$ and $(p_1, m_1), \dots, (p_K, m_K)$ in 
$(\ref{eq:hinverse162})$ with $(\ref{eq:hinverse163})$, 
$h_{\sharp}^{-1}$ has the form
\begin{equation}
h_{\sharp}(z)^{-1} = - \rho_{0,0}^{\sharp} - 
\sum_{{\mu}=1}^{K} \sum_{j=1}^{m_{\mu}} \frac{1}{(1-\barp_{\mu}z)^j}\rho_{{\mu}, j}^{\sharp} 
- \sum_{j=1}^{m_0} z^j \rho_{0,j}^{\sharp},
\label{eq:hsharpinv162}
\end{equation}
where
\[
\left\{
\begin{aligned}
&\rho_{{\mu}, j}^{\sharp}\in\C^{d\times d}, \quad 
\mu \in \{0,\dots,K\},\ j \in \{1,\dots,m_{\mu}\}, \qquad 
\rho_{0,0}^{\sharp} \in\C^{d\times d},\\
&\rho_{{\mu},m_{\mu}}^{\sharp}\neq 0, \quad \mu \in \{1,\dots,K\},\\
&\rho_{0,m_0}^{\sharp}\neq 0\quad \mbox{if}\ \ m_0\ge 1.
\end{aligned}
\right.
\]
Notice that if $d=1$, then we can take $h_{\sharp}=h$, hence $\rho_{0,0}=\rho_{0,0}^{\sharp}$ 
and $\rho_{{\mu}, j} = \rho_{{\mu}, j}^{\sharp}$ for $\mu\in\{1,\dots,K\}$ and $j\in\{1,\dots,m_{\mu}\}$.

Recall $\tilde{h}$ from (\ref{eq:outft628}). From (\ref{eq:hsharpinv162}), 
we have
\[
\tilde{h}(z)^{-1} = - \tilde{\rho}_{0,0} - 
\sum_{{\mu}=1}^{K} \sum_{j=1}^{m_{\mu}} \frac{1}{(1 - p_{\mu} z)^j} \tilde{\rho}_{{\mu}, j} 
- \sum_{j=1}^{m_0} z^j \tilde{\rho}_{0,j},
\]
where
\[
\tilde{\rho}_{0,0} := (\rho_{0,0}^{\sharp})^*, \qquad 
\tilde{\rho}_{\mu,j} := (\rho_{\mu, j}^{\sharp})^*, \quad 
\mu \in \{0,\dots,K\},\ j \in \{1,\dots,m_{\mu}\}.
\]
Recall the sequences $\{a_k\}$ and $\{\tilde{a}_k\}$ from (\ref{eq:AR111}) and (\ref{eq:AR222}), respectively. 
We have
\begin{align}
a_n         &= \sum_{{\mu}=1}^{K} \sum_{j=1}^{m_{\mu}} \binom{n+j-1}{j-1} \barp_{\mu}^n
\rho_{{\mu}, j}, \qquad n\ge m_0 + 1,
\label{eq:a363}\\
\tilde{a}_n &= \sum_{{\mu}=1}^{K} \sum_{j=1}^{m_{\mu}} \binom{n+j-1}{j-1} p_{\mu}^n
\tilde{\rho}_{{\mu}, j}, \qquad n\ge m_0 + 1
\label{eq:atilde361}
\end{align}
and
\begin{align}
a_n         &= \rho_{0,n} + \sum_{{\mu}=1}^{K} \sum_{j=1}^{m_{\mu}} \binom{n+j-1}{j-1} \barp_{\mu}^n
\rho_{{\mu}, j}, \qquad n \in \{0,\dots,m_0\},
\label{eq:a364}\\
\tilde{a}_n &= \tilde{\rho}_{0,n} + \sum_{{\mu}=1}^{K} \sum_{j=1}^{m_{\mu}} \binom{n+j-1}{j-1} p_{\mu}^n
\tilde{\rho}_{{\mu}, j}, \qquad  n \in \{0,\dots,m_0\},
\label{eq:atilde362}
\end{align}
where the convention $\binom{0}{0}=1$ is adopted; see Proposition 4 in \cite{I20}.

We first consider the case of $K = 0$ that corresponds to a $d$-variate AR$(m_0)$ process. 
As can be seen from the following theorem, 
in this case, we have simple closed-form formulas for $T_n(w)^{-1}$.

\begin{theorem}\label{thm:TforR777}
We assume (\ref{eq:farima529}), (\ref{eq:C}) and $K = 0$ 
for $K$ in (\ref{eq:hinverse162}). 
Thus we assume (\ref{eq:K000}). 
Then the following four assertions hold.

\begin{enumerate}

\item For $n \ge m_0 + 1$, $s \in \{1,\dots,n\}$ and $t \in \{1, \dots, n-m_0\}$, we have
\begin{equation}
\left(T_n(w)^{-1}\right)^{s,t} 
= \sum_{\lambda=1}^{s\wedge t} \tilde{a}_{s-\lambda}^* \tilde{a}_{t-\lambda}.
\label{eq:TforR777}
\end{equation}

\item For $n \ge m_0 + 1$, $s \in \{1, \dots, n-m_0\}$ and $t \in \{1,\dots,n\}$, we have
\begin{equation}
\left(T_n(w)^{-1}\right)^{s,t} 
= \sum_{\lambda=1}^{s\wedge t} \tilde{a}_{s-\lambda}^* \tilde{a}_{t-\lambda}.
\label{eq:TforR999}
\end{equation}

\item 
For $n \ge m_0 + 1$, $s \in \{1,\dots,n\}$ and $t \in \{m_0+1, \dots, n\}$, we have
\begin{equation}
\left(T_n(w)^{-1}\right)^{s,t} 
= \sum_{\lambda=s\vee t}^n a_{\lambda-s}^* a_{\lambda-t}.
\label{eq:TforR666}
\end{equation}

\item 
For $n \ge m_0 + 1$, $s \in \{m_0+1, \dots, n\}$ and $t \in \{1,\dots,n\}$, we have
\begin{equation}
\left(T_n(w)^{-1}\right)^{s,t} 
= \sum_{\lambda=s\vee t}^n a_{\lambda-s}^* a_{\lambda-t}.
\label{eq:TforR888}
\end{equation}
\end{enumerate}

\end{theorem}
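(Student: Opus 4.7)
The plan is to exploit the strong degeneracy that the hypothesis $K = 0$ forces on the inputs to the formulas of Theorem~\ref{thm:TforXwithM123}. I would first record three basic vanishings. \textbf{(a)} From (\ref{eq:a363}) and (\ref{eq:atilde361}) with $K = 0$, $a_n = \tilde a_n = 0$ for $n \ge m_0 + 1$; the statement for $\tilde a_n$ uses the observation recalled around (\ref{eq:hsharpinv162}) that $h_{\sharp}^{-1}$ (hence $\tilde h^{-1}$) inherits the value $K = 0$ from $h^{-1}$ by Theorem 2 of \cite{I20}. \textbf{(b)} $\beta_k = 0$ for every $k \ge m_0 + 1$. \textbf{(c)} Since $n \ge m_0 + 1$, every index $n + 1 + m + \ell$ with $m, \ell \ge 0$ is at least $m_0 + 2$, so by (b) each $\beta_{n+1+m+\ell}$ appearing on the right-hand sides of the recursions (\ref{eq:recurs123}) and (\ref{eq:til-recurs123}) vanishes; a one-line induction then yields $b_{n,u,\ell}^k = \tilde b_{n,u,\ell}^k = 0$ for every $k \ge 2$.

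Step (b) is the only part that is not purely mechanical, and I expect it to be the main obstacle. I would establish it by substituting $h(e^{i\theta})^{-1} = -\sum_{j=0}^{m_0} e^{ij\theta}\rho_{0,j}$ and $h_{\sharp}(e^{i\theta})^* = \sum_{m=0}^{\infty} e^{-im\theta}(c_m^{\sharp})^*$, where $c_m^{\sharp}$ are the Taylor coefficients of the outer function $h_{\sharp} \in H_2^{d \times d}(\T)$, into the second line of (\ref{eq:beta-def667}); the Fourier coefficient of the product at frequency $k > m_0$ would require a pair $(j, m)$ with $0 \le j \le m_0$, $m \ge 0$ and $j - m = k$, which is impossible.

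Using (c), the double-series correction in each formula of Theorem~\ref{thm:TforXwithM123} collapses to its $k = 1$ summand, and it remains to check that this single summand vanishes in each prescribed range. For (i), $t \le n - m_0$ and $u \le t$ force $n + 1 - u + \ell \ge m_0 + 1$ for every $\ell \ge 0$, so $\tilde b_{n,u,\ell}^1 = \beta_{n+1-u+\ell}^* = 0$ by (b); combined with $\tilde b_{n,u,\ell}^2 = 0$, the entire curly brace in (\ref{eq:TforXwithM222}) is zero. For (ii), $s \le n - m_0$ forces $a_{n + 1 - s + \ell} = 0$ for every $\ell \ge 0$ by (a), and again $\tilde b_{n,u,\ell}^2 = 0$ finishes the brace. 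Cases (iii) and (iv) are the mirror arguments applied to (\ref{eq:TforXwithM111}): in (iii), $u \ge t \ge m_0 + 1$ yields $\beta_{u+\ell} = 0$ and hence $b_{n,u,\ell}^1 = 0$, while $b_{n,u,\ell}^2 = 0$; in (iv), $s \ge m_0 + 1$ forces $\tilde a_{s + \ell} = 0$ by (a), while $b_{n,u,\ell}^2 = 0$. In each case the remaining first term of Theorem~\ref{thm:TforXwithM123} coincides with the claimed expression, completing the proof.
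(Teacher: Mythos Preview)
Your argument is correct, but the route differs from the paper's. The paper does not feed $K=0$ into the explicit formulas of Theorem~\ref{thm:TforXwithM123}; instead it stops one level earlier, at Theorem~\ref{thm:Tdual123} and Lemma~\ref{lem:Tdual456}. From $K=0$ it reads off $a_k=\tilde a_k=0$ for $k\ge m_0+1$, hence the finite AR identities $\sum_{k=0}^{m_0}\tilde a_k X_{u+k}+\tilde\varepsilon_{-u}=0$ and $\sum_{k=0}^{m_0}a_k X_{u-k}+\varepsilon_u=0$, which place $\tilde\varepsilon_{-u}$ (for $1\le u\le n-m_0$) and $\varepsilon_u$ (for $m_0+1\le u\le n$) inside $V_{[1,n]}^X$. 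The projection terms $\langle X'_s,P_{[1,n]}^\perp\tilde\varepsilon_{-u}\rangle$ and $\langle X'_s,P_{[1,n]}^\perp\varepsilon_u\rangle$ in (\ref{eq:dual888}) and (\ref{eq:dual777}) therefore vanish outright, giving (i) and (iii); (ii) and (iv) then follow from self-adjointness (\ref{eq:SA-123}).

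Your approach instead pushes all the way through to the series in (\ref{eq:TforXwithM222}) and (\ref{eq:TforXwithM111}) and kills the correction terms analytically, via the vanishing of $\beta_k$ for $k\ge m_0+1$ and the consequent collapse of the recursions (\ref{eq:recurs123}), (\ref{eq:til-recurs123}). This is a legitimate alternative: it stays entirely on the Fourier/coefficient side and avoids the Hilbert-space projection language, and it even yields (ii) and (iv) directly rather than by conjugate symmetry. The paper's argument is a bit cleaner conceptually (one sees immediately \emph{why} the correction vanishes: the innovations already live in the finite span), while yours is more self-contained given only Theorem~\ref{thm:TforXwithM123} as a black box.
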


\begin{proof}
For $w$ satisfying (\ref{eq:farima529}), (\ref{eq:C}) and $K = 0$, 
let $\{X_k\}$, $\{X^{\prime}_k\}$, $\{\varepsilon_k\}$ and $\{\tilde{\varepsilon}_k\}$ 
be as in Section \ref{sec:3}.

(i)\ By (\ref{eq:hsharpinv162}) with $K=0$, we have $\tilde{a}_0=\tilde{\rho}_0$, 
$\tilde{a}_k=\tilde{\rho}_{0,k}$ for $k\in\{1,\dots,m_0\}$ and $\tilde{a}_k=0$ for 
$k \ge m_0 + 1$. In particular, we have 
$\sum_{k=0}^{m_0}\tilde{a}_{k}X_{u+k} + \tilde{\varepsilon}_{-u} = 0$ for $u\in\Z$; see (2.15) in \cite{IKP2}. 
This implies $\tilde{\varepsilon}_{-u} \in V_{[1,n]}^X$, or $P_{[1,n]}^{\bot} \tilde{\varepsilon}_{-u} = 0$, 
for $u\in\{1,\dots,n-m_0\}$. 
Therefore, (\ref{eq:TforR777}) follows from Theorem \ref{thm:Tdual123} and (\ref{eq:dual888}). 

(iii)\ By (\ref{eq:K000}), we have $a_0=\rho_{0,0}$, 
$a_k=\rho_{0,k}$ for $k\in\{1,\dots,m_0\}$ and $a_k=0$ for $k \ge m_0 + 1$. In particular, 
$\sum_{k=0}^{m_0}a_{k}X_{u-k} + \varepsilon_u = 0$ for $u\in\Z$; see (2.15) in \cite{IKP2}. 
This implies $\varepsilon_u \in V_{[1,n]}^X$, or $P_{[1,n]}^{\bot} \varepsilon_u = 0$, 
for $u\in\{m_0+1, \dots,n\}$. 
Therefore, (\ref{eq:TforR666}) follows from Theorem \ref{thm:Tdual123} and (\ref{eq:dual777}).

(ii), (iv)\ By (\ref{eq:SA-123}), (ii) and (iv) follow from (i) and (iii), respectively.
\end{proof}

We turn to the case of $K \ge 1$. 
In what follows in this section, for $K$ in (\ref{eq:hinverse162}), we assume
\[
K\ge 1.
\]
For $m_1,\dots,m_K$ in (\ref{eq:hinverse162}), we define $M\in \N$ by
\begin{equation}
M:=\sum_{\mu=1}^K m_{\mu}.
\label{eq:sum-mmu123}
\end{equation}

For $\mu\in\{1,\dots,K\}$, $p_{\mu}$ in (\ref{eq:hinverse162}) and $i\in\N$, 
we define $p_{{\mu}, i}: \Z \to \C^{d\times d}$ by
\begin{equation}
p_{{\mu},i}(k):=
\binom{k}{i-1} p_{\mu}^{k - i+1} I_d, \qquad k\in\Z.
\label{eq:p362}
\end{equation}
Notice that
\[
p_{\mu,i}(0)=\binom{0}{i-1}p_{\mu}^{-i+1} I_d = \delta_{i, 1} I_d.
\]
For $n\in \Z$, we also define $\p_n \in \C^{dM\times d}$ by the 
following block representation:
\[
\begin{aligned}
\p_n
&:=(p_{1, 1}(n), \dots, p_{1, m_1}(n)\ \vert\ p_{2, 1}(n), \dots, p_{2, m_2}(n)\ \vert \\
&\qquad\qquad\qquad\qquad\qquad\qquad\qquad 
\cdots \vert\ 
p_{K, 1}(n), \dots, p_{K, m_{K}}(n))^{\top}.
\end{aligned}
\]
Notice that
\[
\p_0 = (I_d, 0, \dots, 0\ \vert\ I_d, 0, \dots, 0 \vert 
\cdots \vert\ 
I_d, 0, \dots, 0)^{\top} \in \C^{dM\times d}.
\]

We define $\Lambda\in\C^{dM\times dM}$ by
\[
\Lambda := \sum_{\ell=0}^{\infty} \p_{\ell} \p_{\ell}^*.
\]
For ${\mu}, {\nu}\in\{1,2,\dots,K\}$, we define $\Lambda^{{\mu},{\nu}}\in \C^{dm_{\mu}\times dm_{\nu}}$ by the 
block representation
\[
\Lambda^{{\mu},{\nu}} := \left(
\begin{matrix}
\lambda^{{\mu}, {\nu}}(1, 1) & \lambda^{{\mu}, {\nu}}(1, 2) & \cdots & \lambda^{{\mu}, {\nu}}(1, m_{\nu}) \cr
\lambda^{{\mu}, {\nu}}(2, 1) & \lambda^{{\mu}, {\nu}}(2, 2) & \cdots & \lambda^{{\mu}, {\nu}}(2, m_{\nu}) \cr
\vdots                       &  \vdots                      &        & \vdots                             \cr
\lambda^{\mu,\nu}(m_{\mu},1) & \lambda^{\mu,\nu}(m_{\mu},2) & \cdots & \lambda^{\mu,\nu}(m_{\mu},m_{\nu}) \cr
\end{matrix}
\right),
\]
where, for $i \in \{1,\dots,m_{\mu}\}$ and $j \in \{1,\dots,m_{\nu}\}$,
\[
\lambda^{{\mu},{\nu}}(i, j) 
:= \sum_{r=0}^{j-1} 
\binom{i-1}{r} \binom{i+j-r-2}{i-1} 
\frac{p_{\mu}^{j - r -1}\barp_{\nu}^{i-r-1}}{(1-p_{\mu}\barp_{\nu})^{i+j-r-1}} I_d 
\in \C^{d\times d}.
\]
Then, by Lemma 3 in \cite{I20}, the matrix $\Lambda$ has the following block representation:
\[
\Lambda=\left(
\begin{matrix}
\Lambda^{1, 1}       &  \Lambda^{1, 2}   &  \cdots  & \Lambda^{1, K}  \cr
\Lambda^{2, 1}       &  \Lambda^{2, 2}   &  \cdots  & \Lambda^{2, K}  \cr
\vdots               &  \vdots           &  \ddots  & \vdots          \cr
\Lambda^{K, 1}       &  \Lambda^{K, 2}   &  \cdots  & \Lambda^{K, K}  \cr
\end{matrix}
\right).
\]

We define, for $\mu \in \{1,\dots,K\}$ and $j \in \{1,\dots,m_{\mu}\}$,
\begin{equation}
\theta_{{\mu}, j} := - \lim_{z\to p_{\mu}} \frac{1}{(m_{\mu} - j)!} 
\frac{d^{m_{\mu} - j}}{dz^{m_{\mu} - j}} \left\{(z-p_{\mu})^{m_{\mu}} h_{\sharp}(z) h^{\dagger}(z)^{-1}\right\} 
\in \C^{d\times d},
\label{eq:theta456}
\end{equation}
where
\begin{equation}
h^{\dagger}(z) := h(1/\overline{z})^*.
\label{eq:hdagger456}
\end{equation}
We define 
$\Theta\in \C^{dM\times dM}$ by the block representation
\[
\Theta:=
\left(
\begin{matrix}
\Theta_1 & 0        & \cdots & 0\cr
0        & \Theta_2 & \cdots & 0\cr
\vdots   & \vdots   & \ddots & \vdots      \cr
0        & 0        & \cdots & \Theta_{K}
\end{matrix}
\right),
\]
where, for $\mu \in \{1,\dots,K\}$, $\Theta_{\mu} \in \C^{dm_{\mu}\times dm_{\mu}}$ is defined by
\[
\Theta_{\mu} := 
\left(
\begin{matrix}
\theta_{{\mu}, 1} & \theta_{{\mu}, 2}  &  \cdots & \theta_{{\mu}, m_{\mu}-1} & \theta_{{\mu}, m_{\mu}} \cr
\theta_{{\mu}, 2} & \theta_{{\mu}, 3}  &  \cdots & \theta_{{\mu}, m_{\mu}}   &                         \cr
\vdots            &  \vdots            &         &                           &                         \cr
\theta_{{\mu}, m_{\mu}-1}   &  \theta_{{\mu}, m_{\mu}} &          &          &                         \cr
\theta_{{\mu}, m_{\mu}}     &                          &          &          & \mbox{\huge 0}
\end{matrix}
\right)
\]
using $\theta_{\mu,j}$ in (\ref{eq:theta456}) with (\ref{eq:hdagger456}).

For $n\in \Z$, we define 
$\Pi_n\in \C^{dM\times dM}$ by the block representation
\[
\Pi_n:=
\left(
\begin{matrix}
\Pi_{1, n} & 0          & \cdots & 0\cr
0          & \Pi_{2, n} & \cdots & 0\cr
\vdots     & \vdots     & \ddots & \vdots      \cr
0          & 0          & \cdots & \Pi_{K, n}
\end{matrix}
\right),
\]
where, for $\mu \in \{1,\dots,K\}$ and $n\in\Z$, $\Pi_{\mu, n} \in \C^{dm_{\mu}\times dm_{\mu}}$ is defined by
\[
\Pi_{{\mu},n} := 
\left(
\begin{matrix}
p_{{\mu}, 1}(n) & p_{{\mu}, 2}(n) & p_{{\mu}, 3}(n) & \cdots & p_{{\mu}, m_{\mu}}(n)   \cr
                & p_{{\mu}, 1}(n) & p_{{\mu}, 2}(n) & \cdots & p_{{\mu}, m_{\mu}-1}(n) \cr
                &                 &     \ddots      & \ddots &    \vdots               \cr
                &                 &                 & \ddots & p_{{\mu}, 2}(n)         \cr
\mbox{\huge 0}  &                 &                 &        & p_{{\mu}, 1}(n)
\end{matrix}
\right)
\]
using $p_{{\mu}, i}(n)$ in (\ref{eq:p362}).

The next lemma slightly extends Lemma 17 in \cite{I20}.

\begin{lemma}\label{lem:beta162}
We assume (\ref{eq:farima529}), (\ref{eq:C}) and $K \ge 1$ for $K$ in (\ref{eq:hinverse162}). 
Then, 
for $n, k, \ell\in\Z$ such that $n+k+\ell\ge m_0$, we have
\[
\beta_{n+k+\ell+1}^* = \p_{\ell}^{\top} \Pi_n \Theta \p_k,
\]
hence
\[
\beta_{n+k+\ell+1} = \p_k^* (\Pi_n \Theta)^* \overline{\p}_{\ell}.
\]
\end{lemma}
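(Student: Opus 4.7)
The plan is to evaluate $\beta^*_{n+k+\ell+1}$ by residues and then to show, by a direct block calculation, that $\p_\ell^\top\Pi_n\Theta\p_k$ yields the same closed form. Taking adjoints in (\ref{eq:beta-def667}) and rewriting the resulting integral on $[-\pi,\pi)$ as a contour integral over $\T$ gives
\[
\beta_m^* \;=\; -\frac{1}{2\pi i}\oint_{\T} z^{m-1}\,h_{\sharp}(z)\,h^{\dagger}(z)^{-1}\,dz,\qquad m := n+k+\ell+1,
\]
where $h^{\dagger}$ is as in (\ref{eq:hdagger456}); it coincides with $h(z)^*$ on $\T$ and extends to a rational matrix function on $\C$.

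The decisive step is to locate the poles of the integrand inside $\overline{\D}$. Replacing $z$ by $1/\overline{z}$ in (\ref{eq:hinverse162}) and taking adjoints yields the explicit partial-fraction expression
\[
h^{\dagger}(z)^{-1} = -\rho_{0,0}^* - \sum_{\mu=1}^K\sum_{j=1}^{m_\mu}\frac{z^j\rho_{\mu,j}^*}{(z-p_\mu)^j} - \sum_{j=1}^{m_0} z^{-j}\rho_{0,j}^*,
\]
so the only poles of $h_{\sharp}(z)h^{\dagger}(z)^{-1}$ inside $\overline{\D}$ are $p_1,\ldots,p_K$ and $z=0$, the latter of order at most $m_0$. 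The hypothesis $n+k+\ell\ge m_0$ is precisely what makes the factor $z^{m-1}$ carry a zero of sufficient order to cancel the pole at $0$, leaving only the residues at the $p_\mu$. Using (\ref{eq:theta456}), which says that the principal part of $h_{\sharp}h^{\dagger}{}^{-1}$ at $p_\mu$ is $-\sum_{j=1}^{m_\mu}\theta_{\mu,j}/(z-p_\mu)^j$, together with the elementary identity $\mathrm{res}_{z=p_\mu}[z^{m-1}/(z-p_\mu)^j]=\binom{m-1}{j-1}p_\mu^{m-j}$, the residue theorem produces
\[
\beta^*_{n+k+\ell+1} \;=\; \sum_{\mu=1}^K\sum_{j=1}^{m_\mu}\binom{n+k+\ell}{j-1}\,p_\mu^{n+k+\ell-j+1}\,\theta_{\mu,j}.
\]

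To recognise this as $\p_\ell^\top\Pi_n\Theta\p_k$, I exploit the block-diagonal structure of $\Pi_n$ and $\Theta$ to reduce the product to $\sum_\mu (\p_\ell^{(\mu)})^\top\Pi_{\mu,n}\Theta_\mu\p_k^{(\mu)}$, where $\p_k^{(\mu)}:=(p_{\mu,1}(k),\ldots,p_{\mu,m_\mu}(k))^\top$. The Vandermonde identity, transcribed via (\ref{eq:p362}) into the shift relation $\sum_{i=1}^s p_{\mu,i}(\alpha)\,p_{\mu,s-i+1}(\beta)=p_{\mu,s}(\alpha+\beta)$, says exactly that the upper-triangular Toeplitz block $\Pi_{\mu,n}$ acts on $\p_\ell^{(\mu)}$ by shifting the argument, so $(\p_\ell^{(\mu)})^\top\Pi_{\mu,n}=(\p^{(\mu)}_{n+\ell})^\top$. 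Applying the same shift identity a second time, combined with the Hankel structure of $\Theta_\mu$, collapses $(\p^{(\mu)}_{n+\ell})^\top\Theta_\mu\p_k^{(\mu)}$ to $\sum_{s=1}^{m_\mu}\theta_{\mu,s}\,p_{\mu,s}(n+k+\ell)$, matching the residue computation. The second claim $\beta_{n+k+\ell+1}=\p_k^*(\Pi_n\Theta)^*\overline{\p}_\ell$ then follows by taking adjoints.

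The main technical point is the sharpness of the pole-order analysis at $z=0$, which is exactly where the hypothesis $n+k+\ell\ge m_0$ enters; once the partial-fraction formula for $h^{\dagger}(z)^{-1}$ is written down, the rest is a clean residue calculation combined with Vandermonde bookkeeping.
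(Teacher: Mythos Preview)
Your proof is correct. The paper itself omits the proof of this lemma, deferring to Lemma~17 in \cite{I20}; your residue computation is exactly the argument suggested by the definition (\ref{eq:theta456}) of $\theta_{\mu,j}$, and the Vandermonde shift identity $\sum_{i=1}^s p_{\mu,i}(\alpha)p_{\mu,s-i+1}(\beta)=p_{\mu,s}(\alpha+\beta)$ is the natural way to collapse $\p_\ell^\top\Pi_n\Theta\p_k$ to the closed form.
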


The proof of Lemma \ref{lem:beta162} is almost the same as that of Lemma 17 in \cite{I20}, hence we omit it.

For $n\in \Z$, we define $G_n, \tilde{G}_n\in \C^{dM\times dM}$ by
\[
G_n :=  \Pi_n \Theta \Lambda, 
\qquad 
\tilde{G}_n := (\Pi_n \Theta)^* \Lambda^{\top}.
\]

\begin{lemma}\label{lem:bk123}
We assume (\ref{eq:farima529}), (\ref{eq:C}) and $K \ge 1$ 
for $K$ in (\ref{eq:hinverse162}). Then the following two 
assertions hold.
\begin{enumerate}
\item 
We assume $n\ge u\ge m_0+1$. Then, for $k\in\N$ and $\ell\in\mathbb{N}\cup\{0\}$, we have
\begin{align}
b_{n,u,\ell}^{2k-1} &= \p_{u-n-1}^* ( \tilde{G}_n G_n )^{k-1} (\Pi_n \Theta)^* \overline{\p}_{\ell},
\label{eq:bodd123}\\
b_{n,u,\ell}^{2k} &= \p_{u-n-1}^*  (\tilde{G}_n G_n )^{k-1} \tilde{G}_n \Pi_n \Theta \p_{\ell}.
\label{eq:beven123}
\end{align}
\item 
We assume $1\le u\le n-m_0$. Then, for $k\in\N$ and $\ell\in\mathbb{N}\cup\{0\}$, we have
\begin{align}
\tilde{b}_{n,u,\ell}^{2k-1} &= \p_{-u}^{\top} ( G_n \tilde{G}_n )^{k-1} \Pi_n \Theta \p_{\ell},
\label{eq:til-bodd123}\\
  \tilde{b}_{n,u,\ell}^{2k} &= \p_{-u}^{\top}  (G_n \tilde{G}_n )^{k-1} G_n (\Pi_n \Theta)^* \overline{\p}_{\ell}.
\label{eq:til-beven123}
\end{align}
\end{enumerate}
\end{lemma}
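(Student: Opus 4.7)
The plan is to establish both assertions by induction on $k$, using Lemma \ref{lem:beta162} to convert each Fourier coefficient $\beta_{\cdot}$ or $\beta_{\cdot}^*$ into a quadratic form in the vectors $\p_{\cdot}$, and then exploiting the identities $\sum_{m=0}^{\infty} \p_m \p_m^* = \Lambda$ and $\sum_{m=0}^{\infty} \overline{\p}_m \p_m^\top = \Lambda^\top$, together with the definitions $G_n = \Pi_n \Theta \Lambda$ and $\tilde{G}_n = (\Pi_n \Theta)^* \Lambda^\top$, to collapse the double sums produced by the recursions (\ref{eq:recurs123}) and (\ref{eq:til-recurs123}).

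For the base case $k=1$ of (i), I apply Lemma \ref{lem:beta162} with its triple $(n, k, \ell)$ set to $(n, u-n-1, \ell)$, which yields $\beta_{u+\ell} = \p_{u-n-1}^* (\Pi_n \Theta)^* \overline{\p}_{\ell}$; the admissibility condition $u + \ell - 1 \ge m_0$ is supplied by $u \ge m_0 + 1$. For (ii), the same lemma applied with $(n, -u, \ell)$ gives $\beta_{n+1-u+\ell}^* = \p_{\ell}^\top \Pi_n \Theta \p_{-u}$, which is legal because $n - u \ge m_0$. To match the stated form $\p_{-u}^\top \Pi_n \Theta \p_\ell$, I use the symmetry $\p_{\ell'}^\top \Pi_n \Theta \p_{k'} = \p_{k'}^\top \Pi_n \Theta \p_{\ell'}$, valid throughout $n+k'+\ell' \ge m_0$, since both sides equal the scalar-dependent quantity $\beta_{n+k'+\ell'+1}^*$ which depends only on $k'+\ell'$.

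For the inductive step of (i) from $b^{2k-1}$ to $b^{2k}$, I substitute the hypothesis into $b_{n,u,\ell}^{2k} = \sum_m b_{n,u,m}^{2k-1}\,\beta_{n+1+m+\ell}^*$ and rewrite $\beta_{n+1+m+\ell}^* = \p_m^\top \Pi_n \Theta \p_\ell$ via the above symmetry; then the $m$-summation produces $\sum_m \overline{\p}_m \p_m^\top = \Lambda^\top$, and $(\Pi_n \Theta)^* \Lambda^\top = \tilde{G}_n$ delivers (\ref{eq:beven123}). For the passage from $b^{2k}$ to $b^{2k+1}$, I use $\beta_{n+1+m+\ell} = \p_m^* (\Pi_n \Theta)^* \overline{\p}_\ell$ directly (no symmetry needed): the sum $\sum_m \p_m \p_m^* = \Lambda$ appears, and $\Pi_n \Theta \Lambda = G_n$ advances the exponent of $(\tilde{G}_n G_n)$ by one. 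Part (ii) is handled by the same mechanism with the two induction steps interchanged: the $\tilde{b}^{2k-1} \to \tilde{b}^{2k}$ step uses $\beta_{n+1+m+\ell} = \p_m^* (\Pi_n \Theta)^* \overline{\p}_\ell$ directly, while the $\tilde{b}^{2k} \to \tilde{b}^{2k+1}$ step relies on the symmetry to rewrite $\beta^*_{n+1+m+\ell}$ as $\p_m^\top \Pi_n \Theta \p_\ell$.

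The one genuine subtlety, and the principal bookkeeping obstacle, is selecting at each step the correct one of the two equivalent representations of $\beta_{\cdot}$ (respectively $\beta_{\cdot}^*$) supplied by Lemma \ref{lem:beta162}, so that the summation index $m$ appears in a configuration $\p_m \p_m^*$ or $\overline{\p}_m \p_m^\top$ that collapses to $\Lambda$ or $\Lambda^\top$; the range hypotheses $n \ge u \ge m_0 + 1$ in (i) and $1 \le u \le n - m_0$ in (ii) are precisely what guarantee that every application of Lemma \ref{lem:beta162} along the induction remains within the admissible region $n + k + \ell \ge m_0$.
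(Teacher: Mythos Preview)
Your proposal is correct and follows essentially the same approach as the paper's proof: induction on $k$, converting each $\beta$ via Lemma \ref{lem:beta162} into the appropriate $\p$-bilinear form, and collapsing the $m$-sums through $\sum_m \p_m \p_m^* = \Lambda$ and $\sum_m \overline{\p}_m \p_m^\top = \Lambda^\top$. The only cosmetic difference is that where you invoke the ``symmetry'' $\p_{\ell'}^\top \Pi_n \Theta \p_{k'} = \p_{k'}^\top \Pi_n \Theta \p_{\ell'}$, the paper simply applies Lemma \ref{lem:beta162} directly with the two index roles interchanged (e.g., writing $\tilde b_{n,u,\ell}^1 = \beta_{n+1-u+\ell}^* = \p_{-u}^\top \Pi_n \Theta \p_\ell$ in one step); your symmetry observation is valid but not a separate ingredient.
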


The proof of Lemma \ref{lem:bk123} will be given in the Appendix.

For $n\in\N$ and ${\mu}, {\nu}\in\{1,2,\dots,K\}$, 
we define $\Xi_n^{{\mu},{\nu}}\in \C^{dm_{\mu}\times dm_{\nu}}$ by 
the block representation
\[
\Xi_n^{{\mu},{\nu}} := \left(
\begin{matrix}
\xi_n^{{\mu}, {\nu}}(1, 1) & \xi_n^{{\mu}, {\nu}}(1, 2) & \cdots & \xi_n^{{\mu}, {\nu}}(1, m_{\nu}) \cr
\xi_n^{{\mu}, {\nu}}(2, 1) & \xi_n^{{\mu}, {\nu}}(2, 2) & \cdots & \xi_n^{{\mu}, {\nu}}(2, m_{\nu}) \cr
\vdots                       &  \vdots                      &        & \vdots                       \cr
\xi_n^{\mu,\nu}(m_{\mu},1) & \xi_n^{\mu,\nu}(m_{\mu},2) & \cdots & \xi_n^{\mu,\nu}(m_{\mu},m_{\nu}) \cr
\end{matrix}
\right),
\]
where, for $n\in\N$, $i \in \{1,\dots,m_{\mu}\}$ and $j \in \{1,\dots,m_{\nu}\}$, 
$\xi_n^{{\mu},{\nu}}(i, j) \in \C^{d\times d}$ is defined by
\[
\xi_n^{{\mu},{\nu}}(i, j)
:= \sum_{r=0}^{j-1} 
\binom{n+i+j-2}{r} \binom{i+j-r-2}{i-1} 
\frac{p_{\mu}^{j - r -1}\barp_{\nu}^{n+i+j-r-2}}{(1-p_{\mu}\barp_{\nu})^{i+j-r-1}} I_d.
\]
For $n\in\N$, we define $\Xi_n\in\C^{dM\times dM}$ by
\[
\Xi_n:=\left(
\begin{matrix}
\Xi_n^{1, 1}       &  \Xi_n^{1, 2}   &  \cdots  & \Xi_n^{1, K}  \cr
\Xi_n^{2, 1}       &  \Xi_n^{2, 2}   &  \cdots  & \Xi_n^{2, K}  \cr
\vdots             &  \vdots         &  \ddots  & \vdots        \cr
\Xi_n^{K, 1}       &  \Xi_n^{K, 2}   &  \cdots  & \Xi_n^{K, K}  \cr
\end{matrix}
\right).
\]
We also define $\rho \in \C^{dM\times d}$ and $\tilde{\rho} \in \C^{dM\times d}$ 
by the block representations
\[
\rho
:=(\rho_{1, 1}^{\top}, \dots, \rho_{1, m_1}^{\top} \ \vert \ 
\rho_{2, 1}^{\top}, \dots, \rho_{2, m_2}^{\top} \ \vert \ 
\cdots \ \vert \ 
\rho_{K, 1}^{\top}, \dots, \rho_{K, m_{K}}^{\top})^{\top}
\]
and
\[
\begin{aligned}
\tilde{\rho}
&:=(\tilde{\rho}_{1, 1}^{\top}, \dots, \tilde{\rho}_{1, m_1}^{\top} 
\ \vert \ \tilde{\rho}_{2, 1}^{\top}, \dots, \tilde{\rho}_{2, m_2}^{\top} \ \vert \ 
\cdots \ \vert \ 
\tilde{\rho}_{K, 1}^{\top}, \dots, \tilde{\rho}_{K, m_{K}}^{\top})^{\top}\\
&=\left(\overline{\rho_{1, 1}^{\sharp}}, \dots, \overline{\rho_{1, m_1}^{\sharp}} 
\ \vert \ \overline{\rho_{2, 1}^{\sharp}}, \dots, \overline{\rho_{2, m_2}^{\sharp}} \ \vert \ 
\cdots \ \vert \ 
\overline{\rho_{K, 1}^{\sharp}}, \dots, \overline{\rho_{K, m_{K}}^{\sharp}}\right)^{\top},
\end{aligned}
\]
respectively. 
For $n\in\N$, we define $v_n, \tilde{v}_n \in\C^{dM\times d}$ by
\[
v_n := \sum_{\ell=0}^{\infty} \p_{\ell} a_{n+\ell},
\qquad
\tilde{v}_n := \sum_{\ell=0}^{\infty} \overline{\p}_{\ell} \tilde{a}_{n+\ell}.
\]
Then, by Lemma 5 in \cite{I20}, we have
\[
v_n         = \Xi_n \rho, 
\qquad 
\tilde{v}_n =  \overline{\Xi}_n \tilde{\rho}, 
\qquad 
n\ge m_0 + 1.
\]
Moreover, if $m_0\ge 1$, then we have
\[
v_n         =  \Xi_n \rho + \sum_{\ell=0}^{m_0-n} \p_{\ell} \rho_{0,n+\ell}, 
\quad 
\tilde{v}_n =  \overline{\Xi}_n \tilde{\rho} + \sum_{\ell=0}^{m_0-n} \overline{\p}_{\ell} \tilde{\rho}_{0,n+\ell},
 \quad  n \in \{1,\dots,m_0\}.
\]

For $n\in\Z$, we define $w_n, \tilde{w}_n \in\C^{dM\times d}$ by
\[
w_n := \sum_{\ell=0}^{\infty} \p_{\ell - n} a_{\ell},
\qquad 
\tilde{w}_n := \sum_{\ell=0}^{\infty} \overline{\p}_{\ell - n} \tilde{a}_{\ell}.
\]
To give closed-form expressions for $w_n$ and $\tilde{w}_n$, we introduce some matrices. 
For $n\in\Z$ and ${\mu}, {\nu}\in\{1,2,\dots,K\}$, 
we define $\Phi_n^{{\mu},{\nu}}\in \C^{dm_{\mu}\times dm_{\nu}}$ by 
the block representation
\[
\Phi_n^{{\mu},{\nu}} := \left(
\begin{matrix}
\varphi_n^{{\mu}, {\nu}}(1, 1) & \varphi_n^{{\mu}, {\nu}}(1, 2) & \cdots & \varphi_n^{{\mu}, {\nu}}(1, m_{\nu}) \cr
\varphi_n^{{\mu}, {\nu}}(2, 1) & \varphi_n^{{\mu}, {\nu}}(2, 2) & \cdots & \varphi_n^{{\mu}, {\nu}}(2, m_{\nu}) \cr
\vdots                         &  \vdots                        &       & \vdots                                \cr
\varphi_n^{\mu,\nu}(m_{\mu},1) & \varphi_n^{\mu,\nu}(m_{\mu},2) & \cdots & \varphi_n^{\mu,\nu}(m_{\mu},m_{\nu}) \cr
\end{matrix}
\right),
\]
where, for $n\in\Z$, $i=1,\dots,m_{\mu}$ and $j = 1,\dots,m_{\nu}$, 
$\varphi_n^{{\mu},{\nu}}(i, j) \in \C^{d\times d}$ is defined by
\[
\varphi_n^{{\mu},{\nu}}(i, j)
:= \sum_{q=0}^{i-1} \sum_{r=0}^{j-1}  
\binom{j-1}{r} \binom{r+q}{q} \binom{r - n}{i-q-1} 
  \frac{p_{\mu}^{r+q+1-i - n} \barp_{\nu}^{r+q}}{(1-p_{\mu}\barp_{\nu})^{r+q+1}} I_d.
\]
For $n\in\Z$, we define $\Phi_n\in\C^{dM\times dM}$ by
\[
\Phi_n:=\left(
\begin{matrix}
\Phi_n^{1, 1}  &  \Phi_n^{1, 2}  &  \cdots  & \Phi_n^{1, K}  \cr
\Phi_n^{2, 1}  &  \Phi_n^{2, 2}  &  \cdots  & \Phi_n^{2, K}  \cr
\vdots         &  \vdots         &  \ddots  & \vdots         \cr
\Phi_n^{K, 1}  &  \Phi_n^{K, 2}  &  \cdots  & \Phi_n^{K, K}  \cr
\end{matrix}
\right).
\]

Here are closed-form expressions for $w_n$ and $\tilde{w}_n$.

\begin{lemma}\label{lem:w516}
We have
\begin{align*}
w_n         &=  \Phi_n \rho +  \sum_{\ell=0}^{m_0} \p_{\ell - n} \rho_{0,\ell}, \qquad n \in \Z,
\\
\tilde{w}_n &=  \overline{\Phi}_n \tilde{\rho} 
+ \sum_{\ell=0}^{m_0} \overline{\p}_{\ell - n} \tilde{\rho}_{0,\ell}, \qquad n \in \Z.
\end{align*}
\end{lemma}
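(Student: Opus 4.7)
The plan is to reduce the computation of $w_n$ to a scalar binomial-coefficient identity by expanding $a_\ell$ in partial-fraction form. Using (\ref{eq:a363})--(\ref{eq:a364}) uniformly for $\ell\ge 0$, I would write
\[
a_\ell = \rho_{0,\ell}\,\mathbf{1}_{\{0\le\ell\le m_0\}} + \sum_{\nu=1}^K\sum_{j=1}^{m_\nu}\binom{\ell+j-1}{j-1}\barp_\nu^\ell\,\rho_{\nu,j}
\]
(with $\rho_{0,0}$ appearing at $\ell=0$), and substitute into $w_n=\sum_{\ell=0}^\infty\p_{\ell-n}a_\ell$. Since $|p_\mu\barp_\nu|<1$ for every $\mu,\nu$, the double series converges absolutely and the order of summation may be swapped, yielding
\[
w_n = \sum_{\ell=0}^{m_0}\p_{\ell-n}\rho_{0,\ell} + \sum_{\nu=1}^K\sum_{j=1}^{m_\nu}\left(\sum_{\ell=0}^\infty\binom{\ell+j-1}{j-1}\barp_\nu^\ell\,\p_{\ell-n}\right)\rho_{\nu,j}.
\]
The first piece is exactly the boundary term of the claim, so the lemma reduces to identifying the bracket as the $(\nu,j)$-th block-column of $\Phi_n$, which block-wise amounts to the scalar identity
\[
\sum_{\ell=0}^\infty\binom{\ell-n}{i-1}\binom{\ell+j-1}{j-1}p_\mu^{\ell-n-i+1}\barp_\nu^\ell = \varphi_n^{\mu,\nu}(i,j).
\]

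I would prove this identity via two Vandermonde expansions followed by one generating-function evaluation. First apply $\binom{\ell+j-1}{j-1}=\sum_{r=0}^{j-1}\binom{j-1}{r}\binom{\ell}{r}$; then $\binom{\ell-n}{i-1}=\sum_{q=0}^{i-1}\binom{\ell-r}{q}\binom{r-n}{i-1-q}$ (a polynomial identity in $\ell$ with generalized binomials, valid for every integer $\ell$); pull the finite sums in $r$ and $q$ outside; and evaluate the remaining series using the closed form
\[
\sum_{\ell=0}^\infty\binom{\ell-r}{q}\binom{\ell}{r}x^\ell = \binom{r+q}{r}\frac{x^{r+q}}{(1-x)^{r+q+1}},
\]
which I would verify by writing $\binom{\ell-r}{q}=[z^q](1+z)^{\ell-r}$, using $\sum_\ell\binom{\ell}{r}((1+z)x)^\ell = ((1+z)x)^r/(1-(1+z)x)^{r+1}$ (the factor $(1+z)^{-r}$ then cancels), and extracting $[z^q](1-x-xz)^{-(r+1)}$ via the binomial expansion about $z=0$. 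Substituting at $x=p_\mu\barp_\nu$ and collecting powers reproduces precisely the triple product $\binom{j-1}{r}\binom{r+q}{q}\binom{r-n}{i-q-1}\,p_\mu^{r+q+1-i-n}\barp_\nu^{r+q}/(1-p_\mu\barp_\nu)^{r+q+1}$ in the definition of $\varphi_n^{\mu,\nu}(i,j)$.

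The formula for $\tilde{w}_n$ then follows by the same argument applied to $\tilde{a}_\ell$ via (\ref{eq:atilde361})--(\ref{eq:atilde362}); the substitution $p_\mu\leftrightarrow\barp_\mu$ and $\rho_{*,*}\leftrightarrow\tilde{\rho}_{*,*}$ converts $\p_\ell$ to $\overline{\p}_\ell$ and $\Phi_n$ to $\overline{\Phi}_n$. The main technical subtlety is choosing the \emph{right} Vandermonde decomposition of $\binom{\ell+j-1}{j-1}$: using $\sum_r\binom{j-1}{r}\binom{\ell}{r}$ is what causes the inner sum to collapse cleanly into $\binom{r+q}{r}x^{r+q}/(1-x)^{r+q+1}$ and match the paper's triple-binomial form. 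The natural alternative $\sum_r\binom{j-1}{r}\binom{\ell}{j-1-r}$ is equally valid by $\binom{j-1}{r}=\binom{j-1}{j-1-r}$ but leads to a numerically equivalent yet differently organized closed form that does not align term-by-term with the stated definition of $\varphi_n^{\mu,\nu}(i,j)$.
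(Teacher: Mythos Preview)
Your proposal is correct and shares the paper's overall reduction: expand $a_\ell$ via (\ref{eq:a363})--(\ref{eq:a364}), substitute into $w_n=\sum_{\ell\ge 0}\p_{\ell-n}a_\ell$, split off the polynomial piece $\sum_{\ell=0}^{m_0}\p_{\ell-n}\rho_{0,\ell}$, and reduce everything to the scalar identity
\[
\sum_{\ell=0}^\infty\binom{\ell-n}{i-1}\binom{\ell+j-1}{j-1}\,p_\mu^{\ell-n-i+1}\barp_\nu^{\,\ell}=\varphi_n^{\mu,\nu}(i,j).
\]
Where you diverge from the paper is in how this identity is established. The paper proves it by two-variable calculus: it recognises the left side as $\frac{1}{(i-1)!(j-1)!}\,\partial_x^{\,i-1}\partial_y^{\,j-1}\bigl(x^{-n}y^{j-1}/(1-xy)\bigr)$ at $(x,y)=(p_\mu,\barp_\nu)$ and evaluates the derivative by two applications of the Leibniz rule (Propositions~\ref{prop:dif123} and~\ref{prop:sum123}). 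Your route---two Vandermonde decompositions followed by the one-variable evaluation $\sum_\ell\binom{\ell-r}{q}\binom{\ell}{r}x^\ell=\binom{r+q}{r}\,x^{r+q}/(1-x)^{r+q+1}$ via coefficient extraction---is purely combinatorial and never introduces a second variable. Both arguments are clean; the paper's Leibniz computation is more compact (one differentiation produces all three binomial factors simultaneously), while your decomposition makes the combinatorial provenance of each factor $\binom{j-1}{r}$, $\binom{r+q}{q}$, $\binom{r-n}{i-1-q}$ in $\varphi_n^{\mu,\nu}(i,j)$ individually visible. Your remark about the ``wrong'' Vandermonde ordering is apt: the paper's Leibniz argument automatically lands on the form with $\binom{j-1}{r}$ (coming from $\binom{J}{J-r}$ in Proposition~\ref{prop:dif123}), which is why choosing that indexing up front is what makes your computation align term-by-term.
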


The proof of Lemma \ref{lem:w516} will be given in the Appendix.

Recall $M$ from (\ref{eq:sum-mmu123}). 
For $n \in \N$ and $s\in \{1,\dots,n\}$, 
we define 
\begin{align*}
        \ell_{n,s} &:= \{w_{n+1-s} - v_{n+1-s}\}^* (I_{dM} - \tilde{G}_n G_n )^{-1} \in \C^{d \times dM},
\\
\tilde{\ell}_{n,s} &:= \{\tilde{w}_{s} - \tilde{v}_s\}^* (I_{dM} - G_n \tilde{G}_n )^{-1} \in \C^{d \times dM},
\\\
r_{n,s} &:= (\Pi_n \Theta)^* \tilde{v}_s + \tilde{G}_n \Pi_n \Theta v_{n+1-s} \in \C^{dM \times d}
\end{align*}
and
\[
\tilde{r}_{n,s} := \Pi_n \Theta v_{n+1-s} + G_n (\Pi_n \Theta)^* \tilde{v}_s \in \C^{dM \times d}.
\]

Here are closed-form formulas for $(T_n(w))^{-1}$ with $w$ 
satisfying (\ref{eq:C}) and $K \ge 1$.

\begin{theorem}\label{thm:TforR123}
We assume (\ref{eq:farima529}), (\ref{eq:C}) and $K \ge 1$ for $K$ in (\ref{eq:hinverse162}). 
Then the following four assertions hold.
\begin{enumerate}

\item For $n \ge m_0 + 1$, $s \in \{1,\dots,n\}$ and $t \in \{1, \dots, n-m_0\}$, we have
\[
\left(T_n(w)^{-1}\right)^{s,t} 
= \tilde{r}_{n,s}^* \tilde{\ell}_{n,t}^*  
+ \sum_{\lambda=1}^{s\wedge t} \tilde{a}_{s-\lambda}^* \tilde{a}_{t-\lambda}.
\]

\item For $n \ge m_0 + 1$, $s \in \{1, \dots, n-m_0\}$ and $t \in \{1,\dots,n\}$, we have
\[
\left(T_n(w)^{-1}\right)^{s,t} 
= \tilde{\ell}_{n,s} \tilde{r}_{n,t} 
+ \sum_{\lambda=1}^{s\wedge t} \tilde{a}_{s-\lambda}^* \tilde{a}_{t-\lambda}.
\]

\item 
For $n \ge m_0 + 1$, $s \in \{1,\dots,n\}$ and $t \in \{m_0+1, \dots, n\}$, we have
\[
\left(T_n(w)^{-1}\right)^{s,t} 
= r_{n,s}^* \ell_{n,t}^*   
+ \sum_{\lambda=s\vee t}^n a_{\lambda-s}^* a_{\lambda-t}.
\]

\item 
For $n \ge m_0 + 1$, $s \in \{m_0+1, \dots, n\}$ and $t \in \{1,\dots,n\}$, we have
\[
\left(T_n(w)^{-1}\right)^{s,t} 
= \ell_{n,s} r_{n,t} 
+ \sum_{\lambda=s\vee t}^n a_{\lambda-s}^* a_{\lambda-t}.
\]

\end{enumerate}
\end{theorem}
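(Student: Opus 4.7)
The plan is to substitute the closed-form expressions for $b_{n,u,\ell}^k$ and $\tilde b_{n,u,\ell}^k$ from Lemma~\ref{lem:bk123} into the explicit formulas of Corollary~\ref{cor:TforXwithM456}, then collapse the resulting infinite $k$-series geometrically and recognize the remaining finite $u$-sums via the closed forms of Lemma~\ref{lem:w516}. The self-adjointness identity (\ref{eq:SA-123}) shows that assertion (i) is the adjoint of (ii) with $s$ and $t$ interchanged, and likewise that (iii) is the adjoint of (iv); so only (ii) and (iv) require a direct argument.

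For (iv), fix $s\in\{m_0+1,\dots,n\}$ and $t\in\{1,\dots,n\}$ and start from (\ref{eq:TforXwithM333}). In the outer sum over $u\in\{s,\dots,n\}$ one has $u\ge m_0+1$, so Lemma~\ref{lem:bk123}(i) applies and supplies the factorizations (\ref{eq:bodd123})--(\ref{eq:beven123}). By the very definitions of $v_n$ and $\tilde v_n$, the inner $\ell$-sums evaluate to $\Pi_n\Theta\, v_{n+1-t}$ and $(\Pi_n\Theta)^*\tilde v_t$, and combining the odd and even contributions gives
\[
\sum_{k=1}^{\infty}(\tilde G_n G_n)^{k-1}\bigl[(\Pi_n\Theta)^*\tilde v_t+\tilde G_n\Pi_n\Theta\, v_{n+1-t}\bigr]=(I_{dM}-\tilde G_n G_n)^{-1} r_{n,t}
\]
via the matrix geometric series. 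The remaining $u$-sum is then identified by reindexing $\ell=u-s$: the partial sum $\sum_{u=s}^{n}a_{u-s}^*\,\p_{u-n-1}^*$ is exactly $w_{n+1-s}^*$ minus its tail $\sum_{j=0}^{\infty}a_{n+1-s+j}^*\,\p_{j}^*=v_{n+1-s}^*$, so it equals $(w_{n+1-s}-v_{n+1-s})^*$. Multiplication yields $\ell_{n,s}\,r_{n,t}$, which is the claim. The proof of (ii) is entirely parallel: one starts from (\ref{eq:TforXwithM444}), applies Lemma~\ref{lem:bk123}(ii) (legitimate because $u\le s\le n-m_0$), produces $\tilde r_{n,t}$ in place of $r_{n,t}$ and the factor $(I_{dM}-G_n\tilde G_n)^{-1}$ in place of $(I_{dM}-\tilde G_n G_n)^{-1}$, and identifies $\sum_{u=1}^{s}\tilde a_{s-u}^*\,\p_{-u}^{\top}=(\tilde w_s-\tilde v_s)^*$ by the same tail argument applied to $\tilde w_s$.

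The one analytic point to verify is that the matrix Neumann series $\sum_{k\ge 0}(\tilde G_n G_n)^k$ and $\sum_{k\ge 0}(G_n\tilde G_n)^k$ converge to $(I_{dM}-\tilde G_n G_n)^{-1}$ and $(I_{dM}-G_n\tilde G_n)^{-1}$ respectively, which requires the invertibility of those two matrices. This comes for free from the surrounding framework: by Lemma~\ref{lem:bk123}, the iterates $(\tilde G_n G_n)^{k-1}$ encode the block coefficients of the sequence $\{b_{n,u,\ell}^{2k-1}\}_k$, and the sum $\sum_k W_{n,u}^k$ produced by the dual-process expansion converges strongly in $V^d$ by Lemma~\ref{lem:basic-rep123}; this forces the finite-dimensional Neumann series to converge and the matrices in question to be invertible, and the same applies after swapping $G_n$ and $\tilde G_n$. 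I expect the main obstacle to be not analytic but organizational: tracking the placement of $\Pi_n\Theta$ versus $(\Pi_n\Theta)^*$, of $\p_\ell$ versus $\overline{\p}_\ell$, and of transpose versus adjoint on $\p_{-u}$ and $\p_{u-n-1}$, so that the finite truncations of $w_n$ and $\tilde w_n$ line up exactly with the vectors $v_n$ and $\tilde v_n$ appearing in the definitions of $\ell_{n,s}$, $\tilde\ell_{n,s}$, $r_{n,t}$ and $\tilde r_{n,t}$.
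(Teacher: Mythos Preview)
Your approach is essentially the same as the paper's: the paper proves (i) and (iii) directly from Theorem~\ref{thm:TforXwithM123} (formulas (\ref{eq:TforXwithM222}) and (\ref{eq:TforXwithM111})) and then deduces (ii) and (iv) by self-adjointness, whereas you prove (ii) and (iv) directly from Corollary~\ref{cor:TforXwithM456} (formulas (\ref{eq:TforXwithM444}) and (\ref{eq:TforXwithM333})) and deduce (i) and (iii) by self-adjointness. Since Corollary~\ref{cor:TforXwithM456} is itself obtained from Theorem~\ref{thm:TforXwithM123} by taking adjoints, these two routes are mirror images of one another, and the substitutions, the identification of the $\ell$-sums with $v_{n+1-t}$, $\tilde v_t$, and the identification of the truncated $u$-sums with $w_{n+1-s}-v_{n+1-s}$ (resp.\ $\tilde w_s-\tilde v_s$) are identical.

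The one point where you diverge is the justification of the Neumann series: the paper simply invokes Lemma~19 of \cite{I20} to assert convergence of $\sum_{k\ge 0}(\tilde G_nG_n)^k$ and $\sum_{k\ge 0}(G_n\tilde G_n)^k$, while you argue intrinsically from the strong convergence of $\sum_k W_{n,u}^k$ in $V^d$. Your heuristic is in the right direction but, as written, is not quite complete: strong convergence of $\sum_k W_{n,u}^k$ only gives $\|W_{n,u}^k\|_{V^d}\to 0$ for each fixed $u$, which translates to decay of certain quadratic forms $\p_{u-n-1}^*(\tilde G_nG_n)^{k-1}(\cdot)((\tilde G_nG_n)^{k-1})^*\p_{u-n-1}$, and one still has to argue (e.g.\ via a spanning property of the vectors $\p_{u-n-1}$ and positive-definiteness of the inner factor) that the spectral radius of $\tilde G_nG_n$ is strictly less than $1$. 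This is not a serious obstacle, but you should either fill in that step or, as the paper does, cite the external lemma.
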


\begin{proof}
(i)\quad We assume $n \ge m_0 + 1$, $s \in \{1,\dots,n\}$ and $t \in \{1, \dots, n-m_0\}$. 
Then, by Lemma \ref{lem:bk123} (ii) above and Lemma 19 in \cite{I20}, we have
\[
\begin{aligned}
 &\sum_{u=1}^t \sum_{k=1}^{\infty} 
     \left\{ 
          \sum_{\lambda=0}^{\infty} \tilde{b}_{n,u,\lambda}^{2k-1} a_{n+1-s+\lambda} 
     \right\}^* \tilde{a}_{t-u}\\
&\qquad =\sum_{u=1}^t \sum_{k=1}^{\infty} 
     \left\{ 
          \sum_{\lambda=0}^{\infty} \p_{-u}^{\top} ( G_n \tilde{G}_n )^{k-1} \Pi_n \Theta 
                                                      \p_{\lambda} a_{n+1-s+\lambda} 
     \right\}^* \tilde{a}_{t-u}\\
&\qquad =\sum_{u=1}^t \sum_{k=1}^{\infty} 
     \left\{ 
          \p_{-u}^{\top} ( G_n \tilde{G}_n )^{k-1} \Pi_n \Theta v_{n+1-s} 
     \right\}^* \tilde{a}_{t-u}\\
&\qquad =\sum_{u=1}^t 
     \left\{ 
          \p_{-u}^{\top} ( I_{dM} - G_n \tilde{G}_n )^{-1} \Pi_n \Theta v_{n+1-s} 
     \right\}^* \tilde{a}_{t-u}\\
&\qquad =v_{n+1-s}^* (\Pi_n \Theta)^* (I_{dM} - \tilde{G}_n^* G_n^*)^{-1}
\sum_{u=1}^t  \overline{\p}_{-u} \tilde{a}_{t-u}.
\end{aligned}
\]
Similarly, by Lemma \ref{lem:bk123} (ii) above and Lemma 19 in \cite{I20}, 
\[
\sum_{u=1}^t \sum_{k=1}^{\infty} 
     \left\{ 
          \sum_{\lambda=0}^{\infty} \tilde{b}_{n,u,\lambda}^{2k} \tilde{a}_{s+\lambda} 
     \right\}^* \tilde{a}_{t-u}
= \tilde{v}_s^* \Pi_n \Theta G_n^* (I_{dM} - \tilde{G}_n^* G_n^*)^{-1}
\sum_{u=1}^t  \overline{\p}_{-u} \tilde{a}_{t-u}.
\]
However, 
$\sum_{u=1}^t  \overline{\p}_{-u} \tilde{a}_{t-u}
=\sum_{\lambda=0}^{\infty}  \overline{\p}_{\lambda-t} \tilde{a}_{\lambda}
-\sum_{\lambda=0}^{\infty}  \overline{\p}_{\lambda} \tilde{a}_{t+\lambda}
=\tilde{w}_{t} - \tilde{v}_t$. 
Therefore, the assertion (i) follows from Theorem \ref{thm:TforXwithM123} (i).

(iii)\quad We assume $n \ge m_0 + 1$, $s \in \{1,\dots,n\}$ and $t \in \{m_0+1, \dots, n\}$. 
Then, by Lemma \ref{lem:bk123} (i) above and Lemma 19 in \cite{I20}, we have
\[
\begin{aligned}
 &\sum_{u=t}^n \sum_{k=1}^{\infty} 
     \left\{ 
          \sum_{\lambda=0}^{\infty} b_{n,u,\lambda}^{2k-1} \tilde{a}_{s+\lambda} 
     \right\}^* a_{u-t}\\
&\qquad =\sum_{u=t}^n \sum_{k=1}^{\infty} 
     \left\{ 
          \sum_{\lambda=0}^{\infty} \p_{u-n-1}^* ( \tilde{G}_n G_n )^{k-1} (\Pi_n \Theta)^* \overline{\p}_{\lambda} 
\tilde{a}_{s+\lambda} 
     \right\}^* a_{u-t}\\
&\qquad =\sum_{u=t}^n \sum_{k=1}^{\infty} 
     \left\{ 
          \p_{u-n-1}^* ( \tilde{G}_n G_n )^{k-1} (\Pi_n \Theta)^* \tilde{v}_{s} 
     \right\}^* a_{u-t}\\
&\qquad =\sum_{u=t}^n 
     \left\{ 
          \p_{u-n-1}^* ( I_{dM} - \tilde{G}_n G_n )^{-1} (\Pi_n \Theta)^* \tilde{v}_{s} 
     \right\}^* a_{u-t}\\
&\qquad = \tilde{v}_{s}^* \Pi_n \Theta (I_{dM} - G_n^* \tilde{G}_n^* )^{-1}\sum_{u=t}^n  \p_{u-n-1} a_{u-t}.
\end{aligned}
\]
Similarly, by Lemma \ref{lem:bk123} (i) above and Lemma 19 in \cite{I20}, we have
\[
\begin{aligned}
&\sum_{u=t}^n \sum_{k=1}^{\infty} 
     \left\{ 
          \sum_{\lambda=0}^{\infty} b_{n,u,\lambda}^{2k} a_{n+1-s+\lambda} 
     \right\}^* a_{u-t}\\
&\qquad 
= v_{n+1-s}^* (\Pi_n \Theta)^* \tilde{G}_n^* (I_{dM} - G_n^* \tilde{G}_n^* )^{-1} 
\sum_{u=t}^n  \p_{u-n-1} a_{u-t}.
\end{aligned}
\]
However, $\sum_{u=t}^n  \p_{u-n-1} a_{u-t} = w_{n+1-t} - v_{n+1-t}$. 
Therefore, the assertion (ii) follows from Theorem \ref{thm:TforXwithM123} (ii).

(ii), (iv)\quad By (\ref{eq:SA-123}), (ii) and (iv) follow from (i) and (iii), respectively.
\end{proof}

\begin{example}\label{exmp:simple333}
Suppose that $K\ge 1$, $m_{\mu}=1$ for $\mu \in \{1,\dots,K\}$ and $m_0=0$. Then, 
\[
h(z)^{-1} = - \rho_{0,0} - \sum_{{\mu}=1}^{K} \frac{1}{1-\barp_{\mu}z}\rho_{{\mu}, 1}, \qquad 
h_{\sharp}(z)^{-1} = - \rho^{\sharp}_{0,0} - 
\sum_{{\mu}=1}^{K} \frac{1}{1 - \barp_{\mu} z} \rho^{\sharp}_{{\mu}, 1}.
\]
We have
\begin{align*}
&\p_n^{\top} = (p_1^n I_d, \dots, p_K^n I_d)\in \C^{d \times dK}, \quad n \in \Z,\\
&\rho^{\top}
=(\rho_{1, 1}^{\top}, \rho_{2, 1}^{\top}, \dots, \rho_{K, 1}^{\top}) \in \C^{dK\times d}, 
\quad 
\tilde{\rho}^{\top}
=\left(\overline{\rho^{\sharp}_{1, 1}}, \overline{\rho^{\sharp}_{2, 1}}, 
\dots, \overline{\rho^{\sharp}_{K, 1}}\right)
\in \C^{dK\times d}.
\end{align*}
We also have
\begin{align*}
\Theta &= \left(
\begin{matrix}
p_1 h_{\sharp}(p_1) \rho_{1, 1}^* &    0                               &  \cdots &    0                              \cr
0                                 &  p_2 h_{\sharp}(p_2) \rho_{2, 1}^* &  \cdots &    0                              \cr
\vdots                            &  \vdots                            &  \ddots &   \vdots                          \cr
0                                 &    0                               &  \cdots & p_K h_{\sharp}(p_K) \rho_{K, 1}^*
\end{matrix}
\right)
\in \C^{dK\times dK},\\
\Lambda &= \left(
\begin{matrix}
\frac{1}{1-p_{1}\barp_{1}} I_d & \frac{1}{1-p_{1}\barp_{2}} I_d 
&  \cdots  & \frac{1}{1-p_{1}\barp_{K}} I_d  \cr
\frac{1}{1-p_{2}\barp_{1}} I_d  &  \frac{1}{1-p_{2}\barp_{2}} I_d  
&  \cdots  & \frac{1}{1-p_{2}\barp_{K}} I_d  \cr
\vdots               &  \vdots           &  \ddots       & \vdots          \cr
\frac{1}{1-p_{K}\barp_{1}} I_d & \frac{1}{1-p_{K}\barp_{2}} I_d 
&  \cdots  & \frac{1}{1-p_{K}\barp_{K}} I_d  \cr
\end{matrix}
\right)
\in \C^{dK\times dK},\\
\Pi_n &= \left(
\begin{matrix}
p_1^n I_d  &    0        &  \cdots &    0             \cr
0          & p_2^n I_d   &  \cdots &    0             \cr
\vdots     &  \vdots     &  \ddots &   \vdots         \cr
0          &    0        &  \cdots &   p_K^n I_d  
\end{matrix}
\right)
\in \C^{dK\times dK},\qquad n \in \Z,\\
\Xi_n &= \left(
\begin{matrix}
\frac{\barp_{1}^{n}}{1-p_{1}\barp_{1}} I_d & \frac{\barp_{2}^{n}}{1-p_{1}\barp_{2}} I_d 
&  \cdots  & \frac{\barp_{K}^{n}}{1-p_{1}\barp_{K}} I_d  \cr
\frac{\barp_{1}^{n}}{1-p_{2}\barp_{1}} I_d  &  \frac{\barp_{2}^{n}}{1-p_{2}\barp_{2}} I_d  
&  \cdots  & \frac{\barp_{K}^{n}}{1-p_{2}\barp_{K}} I_d  \cr
\vdots               &  \vdots           &  \ddots       & \vdots          \cr
\frac{\barp_{1}^{n}}{1-p_{K}\barp_{1}} I_d & \frac{\barp_{2}^{n}}{1-p_{K}\barp_{2}} I_d 
&  \cdots  & \frac{\barp_{K}^{n}}{1-p_{K}\barp_{K}} I_d  \cr
\end{matrix}
\right)
\in \C^{dK\times dK}, \qquad n \in \N,\\
\Phi_n &= \left(
\begin{matrix}
\frac{p_{1}^{-n}}{1-p_{1}\barp_{1}} I_d & \frac{p_{1}^{-n}}{1-p_{1}\barp_{2}} I_d 
&  \cdots  & \frac{p_{1}^{-n}}{1-p_{1}\barp_{K}} I_d  \cr
\frac{p_{2}^{-n}}{1-p_{2}\barp_{1}} I_d  &  \frac{p_{2}^{-n}}{1-p_{2}\barp_{2}} I_d  
&  \cdots  & \frac{p_{2}^{-n}}{1-p_{2}\barp_{K}} I_d  \cr
\vdots               &  \vdots           &  \ddots       & \vdots          \cr
\frac{p_{K}^{-n}}{1-p_{K}\barp_{1}} I_d & \frac{p_{K}^{-n}}{1-p_{K}\barp_{2}} I_d 
&  \cdots  & \frac{p_{K}^{-n}}{1-p_{K}\barp_{K}} I_d  \cr
\end{matrix}
\right)
\in \C^{dK\times dK}, \qquad n \in \Z,\\
G_n &=  \Pi_n \Theta \Lambda  \in \C^{dK\times dK}, 
\quad 
\tilde{G}_n = (\Pi_n \Theta)^* \Lambda^{\top} \in \C^{dK\times dK},
\quad 
n \in \Z,\\
v_n &= \Xi_n\rho \in \C^{dK\times d}, \quad \tilde{v}_n =  \overline{\Xi}_n \tilde{\rho} \in \C^{dK\times d}, 
\quad 
n\in\N,\\
w_n &= \Phi_n\rho + \p_{-n}\rho_{0,0}\in \C^{dK\times d}, 
\quad \tilde{w}_n =  \overline{\Phi}_n \tilde{\rho} + \overline{\p}_{-n} \tilde{\rho}_{0,0} \in \C^{dK\times d}, 
\quad 
n\in\Z.
\end{align*}
\end{example}

\begin{example}\label{exmp:simple444}
In Example \ref{exmp:simple333}, we further assume $d=K=1$. Then, we can write 
$h(z) = h_{\sharp}(z) = -(1 - \barp z)/\rho$, where $\rho\in \C\setminus\{0\}$ 
and $p \in \D\setminus\{0\}$. It follows that
\begin{align*}
&c_0=-1/\rho, \qquad c_1 = \barp/\rho, \qquad c_k = 0\quad (k\ge 2),\\
&a_k = \rho (\barp)^k, \qquad \tilde{a}_k = \overline{a}_k, \qquad k \in \N\cup\{0\}.
\end{align*}
Since $\gamma(k) = \sum_{\ell=0}^{\infty} c_{k+\ell}\overline{c}_{\ell}$ 
and $\gamma(-k)=\overline{\gamma(k)}$ for $k\in\N\cup\{0\}$, we have
\[
T_2(w) =
\frac{1}{\vert \rho\vert^2}
\left(
\begin{matrix}
1 + \vert p \vert^2 & -p                  \cr
-\barp       & 1 + \vert p \vert^2
\end{matrix}
\right),
\]
hence
\[
T_2(w)^{-1} =
\frac{\vert \rho\vert^2}{1 + \vert p \vert^2 + \vert p \vert^4}
\left(
\begin{matrix}
1 + \vert p \vert^2 & p                   \cr
\barp        & 1 + \vert p \vert^2 
\end{matrix}
\right).
\]
We also have
\[
\tilde{A}_2 = \overline{\rho}
\left(
\begin{matrix}
1 & p \cr
0 & 1
\end{matrix}
\right)
\quad 
\mbox{and}
\quad
A_2 = \rho
\left(
\begin{matrix}
1            & 0 \cr
\barp & 1
\end{matrix}
\right)
\]
for $\tilde{A}_2$ and $A_2$ in (\ref{eq:tildeAnU-234}) and (\ref{eq:AnL-234}), respectively. 
By simple calculations, we have
\begin{align*}
(\ell_{2,1}, \ell_{2,2}) 
&= 
\frac{\overline{\rho}}{(\barp)^2(1 - \vert p\vert^6)}
(1 + \vert p\vert^2, \barp), \qquad 
\left(\tilde{\ell}_{2,1}, \tilde{\ell}_{2,2}\right) 
= 
\left(\overline{\ell_{2,2}}, \overline{\ell_{2,1}} \right),\\
(r_{2,1}, r_{2,2}) 
&= 
- \rho \barp \vert p\vert^2 (1 - \vert p\vert^2) 
(\barp(1+\vert p\vert^2), \vert p\vert^2), \qquad 
\left(\tilde{r}_{2,1}, \tilde{r}_{2,2}\right) 
= 
\left(\overline{r_{2,2}}, \overline{r_{2,1}} \right)
\end{align*}
hence
\[
T_2(w)^{-1}
=
\tilde{A}_2^* \tilde{A}_2
+
\left(
\begin{matrix}
\tilde{\ell}_{2,1} \cr
\tilde{\ell}_{2,2} 
\end{matrix}
\right)
\left(
\tilde{r}_{2,1}, \tilde{r}_{2,2} 
\right)
=
A_2^* A_2
+
\left(
\begin{matrix}
\ell_{2,1} \cr
\ell_{2,2} 
\end{matrix}
\right)
\left(
r_{2,1}, r_{2,2} 
\right)
\]
which agrees with equalities in Theorem \ref{thm:TforR123}.
\end{example}

\section{Linear-time algorithm}\label{sec:6}

As in Section \ref{sec:5}, we assume (\ref{eq:farima529}) and (\ref{eq:C}). 
Let $K$ be as in (\ref{eq:hinverse162}) with (\ref{eq:hinverse163}). 
In this section, we explain how Theorems \ref{thm:TforR777} and \ref{thm:TforR123} 
above provide us with a linear-time algorithm to compute the solution $Z$ 
to the block Toeplitz system (\ref{eq:TS234}).

For
\begin{equation}
Y = (y_1^{\top},\dots,y_n^{\top})^{\top}\in\C^{dn\times d} \quad \mbox{with} \quad 
y_s \in \C^{d \times d}, \quad s \in \{1,\dots,n\},
\label{eq:Y-999}
\end{equation}
let
\[
Z = (z_1^{\top},\dots,z_n^{\top})^{\top}\in\C^{dn\times d} \quad \mbox{with} \quad 
z_s \in \C^{d \times d}, \quad s \in \{1,\dots,n\},
\]
be the solution to (\ref{eq:TS234}), that is, $Z = T_n(w)^{-1}Y$. 
For $m_0$ in (\ref{eq:hinverse162}), let $n \ge 2m_0 + 1$ so that $n-m_0 \ge m_0+1$ holds.

Recall $\tilde{A}_n$ and $A_n$ from (\ref{eq:tildeAnU-234}) and (\ref{eq:AnL-234}), respectively. 
If $K = 0$, then it follows from Lemma \ref{lem:decomp234} and Theorem \ref{thm:TforR777} (ii), (iv) that
\begin{align*}
z_s &= \tilde{\alpha}_{n,s}, \qquad s \in \{1,\dots,n-m_0\},
\\
z_s &= \alpha_{n,s}, \qquad s \in \{m_0+1,\dots,n\},
\end{align*}
where 
\begin{align*}
(\tilde{\alpha}_{n,1}^{\top}, \dots, \tilde{\alpha}_{n,n}^{\top})^{\top} 
            &:= \tilde{A}_n^* \tilde{A}_n Y
\quad \mbox{with} \quad \tilde{\alpha}_{n,s} \in \C^{d \times d}, \quad s \in \{1,\dots,n\},
\\
(\alpha_{n,1}^{\top}, \dots, \alpha_{n,n}^{\top})^{\top} 
            &:= A_n^* A_n Y
\quad \mbox{with} \quad \alpha_{n,s} \in \C^{d \times d}, \quad s \in \{1,\dots,n\}.
\end{align*}
On the other hand, if $K \ge 1$, then we see from Lemma \ref{lem:decomp234} and Theorem \ref{thm:TforR123} (ii), (iv) that
\begin{align*}
z_s &= \tilde{\ell}_{n,s} \tilde{R}_n 
             + \tilde{\alpha}_{n,s}, \qquad s \in \{1,\dots,n-m_0\},
\\
z_s &= \ell_{n,s} R_n 
             + \alpha_{n,s}, \qquad s \in \{m_0+1,\dots,n\},
\end{align*}
where 
\[
\tilde{R}_n := \sum_{t=1}^n \tilde{r}_{n,t} y_t \in \C^{d \times d}, 
\qquad
R_n := \sum_{t=1}^n r_{n,t} y_t \in \C^{d \times d}.
\]
Therefore, algorithms to compute $\tilde{A}_n^* \tilde{A}_n Y$ and $A_n^* A_n Y$ 
in $O(n)$ operations imply that of $Z$. 
We present the former ones below.

For $n\in\N\cup\{0\}$, $\mu\in\{1,\dots,K\}$ and $j\in\{1,\dots,m_{\mu}\}$, we define 
$q_{\mu,j}(n) \in \C^{d\times d}$ by $q_{\mu,j}(n):=p_{\mu,j}(n+j-1)$, that is,
\begin{equation}
q_{\mu,j}(n) = \binom{n+j-1}{j-1} p_{\mu}^nI_d.
\label{eq:a-224}
\end{equation}
For $n\in\N$, $\mu\in\{1,\dots,K\}$ and $j\in\{1,\dots,m_{\mu}\}$, we define 
the upper trianglular block Toeplitz matrix
$Q_{\mu,j,n} \in \C^{dn\times dn}$ by
\[
Q_{\mu,j,n} := 
\left(
\begin{matrix}
q_{\mu,j}(0)    & q_{\mu,j}(1) &  q_{\mu,j}(2) & \cdots & q_{\mu,j}(n-1) \cr
                & q_{\mu,j}(0) &  q_{\mu,j}(1) & \cdots & q_{\mu,j}(n-2) \cr
                &              &  \ddots       & \ddots &    \vdots      \cr
                &              &               & \ddots & q_{\mu,j}(1)   \cr
\mbox{\huge 0}  &              &               &        & q_{\mu,j}(0)
\end{matrix}
\right).
\]
Notice that
\[
Q^*_{\mu,j,n} := 
\left(
\begin{matrix}
q^*_{\mu,j}(0)   &                   &          &                 &  \mbox{\huge 0} \cr
q^*_{\mu,j}(1)   &  q^*_{\mu,j}(0)   &          &                 &                 \cr
q^*_{\mu,j}(2)   &  q^*_{\mu,j}(1)   &  \ddots  &                 &                 \cr
  \vdots         &    \vdots         &  \ddots  &  \ddots         &                 \cr
q^*_{\mu,j}(n-1) &  q^*_{\mu,j}(n-2) &  \cdots  &  q^*_{\mu,j}(1) &  q^*_{\mu,j}(0)
\end{matrix}
\right)
\]
with $q^*_{\mu,j}(n) = \binom{n+j-1}{j-1} \barp_{\mu}^nI_d$. 
For $n\in\N$, $\mu\in\{1,\dots,K\}$ and $j\in\{1,\dots,m_{\mu}\}$, we define 
the block diagonal matrices 
$\tilde{D}_{\mu,j,n} \in \C^{dn\times dn}$ 
and 
$D_{\mu,j,n} \in \C^{dn\times dn}$ by
\[
\tilde{D}_{\mu,j,n} := 
\left(
\begin{matrix}
\tilde{\rho}_{{\mu},j} & 0        & \cdots & 0                       \cr
0        & \tilde{\rho}_{{\mu},j} & \cdots & 0                       \cr
\vdots   & \vdots                 & \ddots & \vdots                  \cr
0        & 0                      & \cdots & \tilde{\rho}_{{\mu},j}
\end{matrix}
\right)
\quad \mbox{and} \quad 
D_{\mu,j,n} := 
\left(
\begin{matrix}
\rho_{{\mu},j} & 0              & \cdots & 0              \cr
0              & \rho_{{\mu},j} & \cdots & 0              \cr
\vdots         & \vdots         & \ddots & \vdots         \cr
0              & 0              & \cdots & \rho_{{\mu},j}
\end{matrix}
\right),
\]
respectively. 
Moreover, 
for $n\ge m_0+1$, we define the upper and lower triangular block Toeplitz matrices 
$\tilde{\Delta}_{n} \in \C^{dn\times dn}$ 
and 
$\Delta_{n} \in \C^{dn\times dn}$ by
\[
\tilde{\Delta}_{n} := 
\left(
\begin{matrix}
\tilde{\rho}_{0,0} & \tilde{\rho}_{0,1} & \cdots             & \tilde{\rho}_{0,m_0} &                      & \mbox{\huge 0}       \cr
                   & \tilde{\rho}_{0,0} & \tilde{\rho}_{0,1} &                      & \ddots               &                      \cr
                   &                    & \ddots             & \ddots               &                      & \tilde{\rho}_{0,m_0} \cr
                   &                    &                    & \ddots               & \ddots               & \vdots               \cr
                   &                    &                    &                      & \tilde{\rho}_{0,0}   & \tilde{\rho}_{0,1}   \cr
\mbox{\huge 0}     &                    &                    &                      &                      & \tilde{\rho}_{0,0}  
\end{matrix}
\right)
\]
and
\[
\Delta_{n} := 
\left(
\begin{matrix}
\rho_{0,0}     &            &               &        &            & \mbox{\huge 0}  \cr
\rho_{0,1}     & \rho_{0,0} &               &        &            &                \cr
\vdots         & \rho_{0,1} & \ddots        &        &            &                \cr
\rho_{0,m_0}   &            & \ddots        & \ddots &            &                \cr
               & \ddots     &               & \ddots & \rho_{0,0} &                \cr
\mbox{\huge 0} &            &  \rho_{0,m_0} & \cdots & \rho_{0,1} & \rho_{0,0}  
\end{matrix}
\right),
\]
respectively. Note that both $\tilde{\Delta}_{n}$ and $\Delta_{n}$ are 
sparse matrices in the sense that they have only 
$O(n)$ nonzero elements. 

By (\ref{eq:a363})--(\ref{eq:atilde362}), we have
\begin{align*}
\tilde{A}_n &= \tilde{\Delta}_{n} + \sum_{\mu=1}^K \sum_{j=1}^{m_{\mu}} Q_{\mu,j,n} \tilde{D}_{\mu,j,n}, 
\qquad n\ge m_0 + 1,
\\
A_n &= \Delta_{n} + \sum_{\mu=1}^K \sum_{j=1}^{m_{\mu}} Q^*_{\mu,j,n} D_{\mu,j,n}, 
\qquad n\ge m_0 + 1.
\end{align*}
Therefore, it is enough to give linear-time algorithms to compute 
$Q_{\mu,i,n} Y$ and $Q^*_{\mu,i,n} Y$ for $Y \in \C^{dn\times d}$ in $O(n)$ operations. 
The following two propositions provide such linear-time algorithms.

\begin{proposition}\label{prop:calcQ111}
Let $n\in\N$, $\mu\in\{1,\dots,K\}$ and $Y$ be as in (\ref{eq:Y-999}). 
We put 
$Z_{\mu,i}=Q_{\mu,i,n} Y$ for $i\in\{1,\dots,m_{\mu}\}$. 
Then the component blocks $z_{\mu,i}(s)$ of 
$Z_{\mu,i}=(z_{\mu,i}^{\top}(1),\dots,z_{\mu,i}^{\top}(n))^{\top}$ 
satisfy the following equalities:
\begin{align}
&z_{\mu,i}(n) = q_{\mu,i}(0) y_n, \qquad i \in \{1,\dots,m_{\mu}\},
\label{eq:z-recurs111}\\
&z_{\mu,1}(s) = p_{\mu} z_{\mu,1}(s+1) +q_{\mu,1}(0) y_s, \qquad s\in\{1,\dots,n-1\}
\label{eq:z-recurs222}\\
&\begin{aligned}
z_{\mu,i}(s) &= p_{\mu} z_{\mu,i}(s+1) + z_{\mu,i-1}(s) + \{q_{\mu,i}(0) -q_{\mu,i-1}(0)\} y_s,\\
&\qquad\qquad\qquad\qquad\qquad i\in\{2,\dots,m_{\mu}\}, \ s\in\{1,\dots,n-1\}.
\end{aligned}
\label{eq:z-recurs333}
\end{align}
\end{proposition}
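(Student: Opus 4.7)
The plan is to verify the three asserted equalities by direct computation starting from the explicit series representation of $z_{\mu,i}(s)$. Because $Q_{\mu,i,n}$ is an upper triangular block Toeplitz matrix, the block decomposition of $Z_{\mu,i}=Q_{\mu,i,n}Y$ yields
\[
z_{\mu,i}(s) = \sum_{k=0}^{n-s} q_{\mu,i}(k)\, y_{s+k}, \qquad s \in \{1,\dots,n\},
\]
which will be the starting point for all three identities. The equality (\ref{eq:z-recurs111}) is then immediate: at $s=n$ only the $k=0$ term survives, giving $z_{\mu,i}(n)=q_{\mu,i}(0)y_n$.

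For (\ref{eq:z-recurs222}), I would use that $q_{\mu,1}(k) = p_{\mu}^k I_d$, so shifting the summation index in $p_{\mu} z_{\mu,1}(s+1)$ gives $\sum_{k=1}^{n-s} p_{\mu}^{k} y_{s+k}$, and adding the missing $k=0$ term $q_{\mu,1}(0)y_s=y_s$ reconstructs the full sum for $z_{\mu,1}(s)$.

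For (\ref{eq:z-recurs333}), the key step is Pascal's identity. Writing
\[
p_{\mu} z_{\mu,i}(s+1) = \sum_{k=1}^{n-s} \binom{k+i-2}{i-1} p_{\mu}^{k} y_{s+k}
\]
by the shift $k\mapsto k-1$ applied to (\ref{eq:a-224}), and combining with
\[
z_{\mu,i-1}(s) = \sum_{k=0}^{n-s} \binom{k+i-2}{i-2} p_{\mu}^{k} y_{s+k},
\]
the identity $\binom{k+i-2}{i-1}+\binom{k+i-2}{i-2}=\binom{k+i-1}{i-1}$ (valid for $k\ge 1$) allows the combined sum over $k\ge 1$ to be folded into $z_{\mu,i}(s)$; the $k=0$ term then reads $\binom{i-2}{i-2}y_s=y_s=q_{\mu,i-1}(0)y_s$, giving precisely $z_{\mu,i}(s) - q_{\mu,i}(0) y_s + q_{\mu,i-1}(0) y_s$, which rearranges to the stated recurrence. (Observe that $q_{\mu,i}(0)=q_{\mu,i-1}(0)=I_d$, so the last bracket in (\ref{eq:z-recurs333}) vanishes identically; the proposition keeps it in symbolic form presumably for uniformity with a possible implementation.)

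Since the proof is purely combinatorial, there is no real obstacle; the only bookkeeping concern is the boundary index $k=0$, which must be tracked carefully to confirm that Pascal's identity produces exactly the missing term needed to close the recurrence.
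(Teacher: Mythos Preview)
Your proposal is correct and follows essentially the same route as the paper: both start from $z_{\mu,i}(s)=\sum_{k=0}^{n-s} q_{\mu,i}(k)\,y_{s+k}$ and use Pascal's rule on the binomials in $q_{\mu,i}$ to obtain the recursion, with the only cosmetic difference that the paper first records the recursions $q_{\mu,1}(k+1)=p_\mu q_{\mu,1}(k)$ and $q_{\mu,i}(k+1)=p_\mu q_{\mu,i}(k)+q_{\mu,i-1}(k+1)$ before substituting, whereas you apply Pascal directly inside the sum. Your parenthetical remark that $q_{\mu,i}(0)=q_{\mu,i-1}(0)=I_d$ (so the bracket in (\ref{eq:z-recurs333}) vanishes) is a correct additional observation not made explicit in the paper.
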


\begin{proof}
From the definition of $Q_{\mu,i,n}$, (\ref{eq:z-recurs111}) is trivial. 
For $q_{\mu,i}(k)$ in (\ref{eq:a-224}), Pascal's rule yields 
the following recursions: 
\begin{align}
&q_{\mu,1}(k+1) = p_{\mu}q_{\mu,1}(k), \quad k \in \N\cup\{0\},
\label{eq:a-recurs111}\\
&q_{\mu,i}(k+1) = p_{\mu}q_{\mu,i}(k) +q_{\mu,i-1}(k+1), 
\quad i \in \{2,\dots,j\}, \ k \in \N\cup\{0\}.
\label{eq:a-recurs222}
\end{align}
For $s\in\{1,\dots,n-1\}$, we see, from (\ref{eq:a-recurs111}),
\[
\begin{aligned}
z_{\mu,1}(s) 
&=q_{\mu,1}(0) y_s + \sum_{t=0}^{n-s-1}q_{\mu,1}(t+1) y_{s+t+1}\\
&=q_{\mu,1}(0) y_s + p_{\mu} \sum_{t=0}^{n-s-1}q_{\mu,1}(t) y_{s+t+1}
=q_{\mu,1}(0) y_s + p_{\mu} z_{\mu,1}(s+1),
\end{aligned}
\]
and, from (\ref{eq:a-recurs222}),
\[
\begin{aligned}
z_{\mu,i}(s) 
&=q_{\mu,i}(0) y_s + \sum_{t=0}^{n-s-1}q_{\mu,i}(t+1) y_{s+t+1}\\
&= \{q_{\mu,i}(0) -q_{\mu,i-1}(0)\} y_s 
+ p_{\mu} \sum_{t=0}^{n-s-1}q_{\mu,1}(t) y_{s+t+1} 
+ \sum_{t=0}^{n-s}q_{\mu,i-1}(t) y_{s+t}\\
&=\{q_{\mu,i}(0) -q_{\mu,i-1}(0)\} y_s 
+ p_{\mu} z_{\mu,i}(s+1) + z_{\mu,i-1}(s)
\end{aligned}
\]
for $i\in\{2,\dots,j\}$. 
Thus, (\ref{eq:z-recurs222}) and (\ref{eq:z-recurs333}) follow.
\end{proof}

By Proposition \ref{prop:calcQ111}, we can compute $z_{\mu,i}(s)$ 
in the following order in $O(n)$ operations:
\[
\begin{aligned}
&z_{\mu,1}(n) \ \to \ \cdots \to \ z_{\mu,1}(1) \ \to \ z_{\mu,2}(n) \ \to \cdots \to \ z_{\mu,2}(1)\\ 
&\qquad\qquad\qquad\qquad\qquad\qquad \to \cdots \to \ z_{\mu,m_{\mu}}(n) \ \to \cdots \to z_{\mu,m_{\mu}}(1).
\end{aligned}
\]

\begin{proposition}\label{prop:calcQ222}
Let $n\in\N$, $\mu\in\{1,\dots,K\}$ and $Y$ be as in (\ref{eq:Y-999}). 
We put 
$W_{\mu,i}=Q^*_{\mu,i,n} Y$ for $i\in\{1,\dots,m_{\mu}\}$. 
Then the component blocks $w_{\mu,i}(s)$ of 
$W_{\mu,i}=(w_{\mu,i}^{\top}(1),\dots,w_{\mu,i}^{\top}(n))^{\top}$ 
satisfy the following equalities: 
\begin{align*}
&w_{\mu,i}(1) = q_{\mu,i}^*(0) y_1, \qquad i \in \{1,\dots,m_{\mu}\},
\\
&w_{\mu,1}(s+1) = \barp_{\mu} w_{\mu,1}(s) +q_{\mu,1}^*(0) y_{s+1}, \qquad s\in\{1,\dots,n-1\}
\\
&\begin{aligned}
w_{\mu,i}(s+1) &= \barp_{\mu} w_{\mu,i}(s) + w_{\mu,i-1}(s+1) + \{q_{\mu,i}^*(0) -q_{\mu,i-1}^*(0)\} y_{s+1},\\
&\qquad\qquad\qquad\qquad\qquad i\in\{2,\dots,m_{\mu}\}, \ s\in\{1,\dots,n-1\}.
\end{aligned}
\end{align*}
\end{proposition}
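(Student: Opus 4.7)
The plan is to mirror the proof of Proposition \ref{prop:calcQ111} almost verbatim, exploiting the observation that $Q_{\mu,i,n}^*$ is the lower triangular block Toeplitz matrix whose $(s,t)$-block for $s\ge t$ is $q_{\mu,i}^*(s-t)$, so that
\[
w_{\mu,i}(s)=\sum_{t=1}^{s}q_{\mu,i}^*(s-t)y_t.
\]
The base case $w_{\mu,i}(1)=q_{\mu,i}^*(0)y_1$ then drops out immediately.

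First I would record the conjugate analogues of the Pascal-rule recursions (\ref{eq:a-recurs111}) and (\ref{eq:a-recurs222}). Since $q_{\mu,i}^*(k)=\binom{k+i-1}{i-1}\barp_{\mu}^{k}I_d$, Pascal's identity yields, for all $k\in\N\cup\{0\}$,
\[
q_{\mu,1}^*(k+1)=\barp_{\mu}\,q_{\mu,1}^*(k),\qquad
q_{\mu,i}^*(k+1)=\barp_{\mu}\,q_{\mu,i}^*(k)+q_{\mu,i-1}^*(k+1),\quad i\in\{2,\dots,m_{\mu}\}.
\]

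Next, for $s\in\{1,\dots,n-1\}$ I would split $w_{\mu,i}(s+1)$ into the diagonal contribution $q_{\mu,i}^*(0)y_{s+1}$ and the remaining sum $\sum_{t=1}^{s}q_{\mu,i}^*(s+1-t)y_t$. Substituting the first scalar recursion into the latter for $i=1$ converts it into $\barp_{\mu}\sum_{t=1}^{s}q_{\mu,1}^*(s-t)y_t=\barp_{\mu}w_{\mu,1}(s)$, yielding the claimed identity. For $i\ge 2$, the second scalar recursion turns that same sum into
\[
\barp_{\mu}w_{\mu,i}(s)+\sum_{t=1}^{s}q_{\mu,i-1}^*(s+1-t)y_t
=\barp_{\mu}w_{\mu,i}(s)+w_{\mu,i-1}(s+1)-q_{\mu,i-1}^*(0)y_{s+1},
\]
and combining with the diagonal contribution produces the desired recursion.

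There is no serious obstacle here; the argument is the adjoint of the one used for Proposition \ref{prop:calcQ111}, and the main point to watch is the direction of propagation (indices now run forward, from $s$ to $s+1$, because $Q_{\mu,i,n}^*$ is lower triangular rather than upper triangular). Consequently, the blocks $w_{\mu,i}(s)$ can be computed in the order $w_{\mu,1}(1),\dots,w_{\mu,1}(n),w_{\mu,2}(1),\dots,w_{\mu,m_{\mu}}(n)$ in $O(n)$ operations, exactly parallel to the algorithm that follows Proposition \ref{prop:calcQ111}.
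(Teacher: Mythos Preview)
Your proposal is correct and follows precisely the route the paper indicates: the paper omits the proof entirely, stating only that it is similar to that of Proposition \ref{prop:calcQ111}, and your argument supplies exactly those parallel details. The conjugate Pascal recursions and the splitting of $w_{\mu,i}(s+1)$ into the diagonal term plus the shifted sum are exactly what is needed.
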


The proof of Proposition \ref{prop:calcQ222} is similar to that of 
Proposition \ref{prop:calcQ111}; we omit it.

By Proposition \ref{prop:calcQ222}, we can compute $w_{\mu,i}(s)$ 
in the following order in $O(n)$ operations:
\[
\begin{aligned}
&w_{\mu,1}(1) \ \to \ \cdots \to \ w_{\mu,1}(n) \ \to \ w_{\mu,2}(1) \ \to \cdots \to \ w_{\mu,2}(n)\\ 
&\qquad\qquad\qquad\qquad\qquad\qquad \to \cdots \to \ w_{\mu,m_{\mu}}(1) \ \to \cdots \to w_{\mu,m_{\mu}}(n).
\end{aligned}
\]




\appendix

\section{Proofs of Lemmas \ref{lem:bk123} and \ref{lem:w516}}\label{sec:Append-A}

As in Section \ref{sec:5}, we assume (\ref{eq:farima529}) and (\ref{eq:C}). 
We use the same notation as in Section \ref{sec:5}. 
For $K$ in (\ref{eq:hinverse162}) with (\ref{eq:hinverse163}), we assume $K\ge 1$.

We prove Lemma \ref{lem:bk123}.

\begin{proof}
(i)\ 
We assume $n\ge u\ge m_0+1$, and prove (\ref{eq:bodd123}) and (\ref{eq:beven123}) by induction. 
First, from Lemma \ref{lem:beta162}, 
\[
b_{n,u,\ell}^1 = \beta_{u+l} = \p_{u-n-1}^* (\Pi_n \Theta)^* \overline{\p}_{\ell}.
\]
Next, for $k=1,2,\dots$, we assume (\ref{eq:bodd123}). Then, by Lemma \ref{lem:beta162}, 
\[
\begin{aligned}
b_{n,u,\ell}^{2k}
&= \sum_{m=0}^{\infty} b_{n,u,m}^{2k-1} \beta_{n+1+m+\ell}^* \\
&= \p_{u-n-1}^* (\tilde{G}_n G_n)^{k-1} (\Pi_n \Theta)^* 
\left(\sum_{m=0}^{\infty} \overline{\p}_m \p_m^{\top}\right) \Pi_n \Theta \p_{\ell}\\
&=  \p_{u-n-1}^* (\tilde{G}_n G_n)^{k-1} (\Pi_n \Theta)^*  \Lambda^{\top} \Pi_n \Theta \p_{\ell}
=\p_{u-n-1}^* (\tilde{G}_n G_n)^{k-1} \tilde{G}_n \Pi_n \Theta \p_{\ell}
\end{aligned}
\]
or (\ref{eq:beven123}). 
From this as well as Lemma \ref{lem:beta162}, 
\[
\begin{aligned}
b_{n,u,\ell}^{2k+1}
&= \sum_{m=0}^{\infty} b_{n,u,m}^{2k} \beta_{n+1+m+\ell}\\
&= \p_{u-n-1}^* (\tilde{G}_n G_n)^{k-1} \tilde{G}_n \Pi_n \Theta 
\left(\sum_{m=0}^{\infty} \p_{m} \p_{m}^*\right) (\Pi_n \Theta)^* \overline{\p}_{\ell}\\
&= \p_{u-n-1}^* (\tilde{G}_n G_n)^{k-1} \tilde{G}_n \Pi_n \Theta \Lambda (\Pi_n \Theta)^* 
\overline{\p}_{\ell}
= \p_{u-n-1}^* (\tilde{G}_n G_n)^{k} (\Pi_n \Theta)^* \overline{\p}_{\ell}
\end{aligned}
\]
or (\ref{eq:bodd123}) with $k$ replaced by $k+1$. 
Thus (\ref{eq:bodd123}) and (\ref{eq:beven123}) follow.

(ii)\ 
We assume $1\le u\le n-m_0$, and prove (\ref{eq:til-bodd123}) and (\ref{eq:til-beven123}) by induction. 
First, from Lemma \ref{lem:beta162}, 
\[
\tilde{b}_{n,u,\ell}^1 = \beta_{n+1-u+\ell}^* = \p_{-u}^{\top} \Pi_n \Theta \p_{\ell}.
\]
Next, for $k=1,2,\dots$, we assume (\ref{eq:til-bodd123}). Then, by Lemma \ref{lem:beta162}, 
\[
\begin{aligned}
\tilde{b}_{n,u,\ell}^{2k}
&= \sum_{m=0}^{\infty} \tilde{b}_{n,u,m}^{2k-1} \beta_{n+1+m+\ell}\\
&= \p_{-u}^{\top} (G_n \tilde{G}_n)^{k-1} \Pi_n \Theta 
\left(\sum_{m=0}^{\infty} \p_m \p_m^* \right) (\Pi_n \Theta)^* \overline{\p}_{\ell}\\
&=  \p_{-u}^{\top} (G_n \tilde{G}_n)^{k-1} \Pi_n \Theta  \Lambda (\Pi_n \Theta)^* \overline{\p}_{\ell}
=\p_{-u}^{\top} (G_n \tilde{G}_n)^{k-1} G_n (\Pi_n \Theta)^* \overline{\p}_{\ell}
\end{aligned}
\]
or (\ref{eq:til-beven123}). 
From this as well as Lemma \ref{lem:beta162}, 
\[
\begin{aligned}
\tilde{b}_{n,u,\ell}^{2k+1}
&= \sum_{m=0}^{\infty} \tilde{b}_{n,u,m}^{2k} \beta_{n+1+m+\ell}^*
= \p_{-u}^{\top} (G_n \tilde{G}_n)^{k-1} G_n (\Pi_n \Theta)^* 
\left(\sum_{m=0}^{\infty} \overline{\p}_{m} \p_{m}^{\top}\right) \Pi_n \Theta \p_{\ell}\\
&= \p_{-u}^{\top} (G_n \tilde{G}_n)^{k-1} G_n (\Pi_n \Theta)^* \Lambda^{\top} \Pi_n \Theta 
\p_{\ell}
= \p_{-u}^{\top} (G_n \tilde{G}_n)^{k} \Pi_n \Theta \p_{\ell}
\end{aligned}
\]
or (\ref{eq:til-bodd123}) with $k$ replaced by $k+1$. 
Thus (\ref{eq:til-bodd123}) and (\ref{eq:til-beven123}) follow.
\end{proof}

To prove Lemma \ref{lem:w516}, we need some propositions.

\begin{proposition}\label{prop:dif123}
For $m, n\in \Z$, $i, j \in\N\cup\{0\}$ and $x, y\in \D$, we have
\[
\begin{aligned}
&\frac{1}{i! j!} \left(\frac{\partial }{\partial x}\right)^{i}
\left(\frac{\partial }{\partial y}\right)^{j} 
\frac{x^m y^n}{1-xy}\\
&\qquad\qquad\qquad =\sum_{q=0}^{i} \sum_{r=0}^{j}  
\binom{n}{j-r} \binom{q+r}{q} \binom{m+r}{i-q} 
  \frac{x^{m+q+r-i} y^{n+q+r-j}}{(1-xy)^{q+r+1}}.
\end{aligned}
\]
\end{proposition}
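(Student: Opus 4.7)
The plan is to compute the derivatives sequentially, applying $\partial_y^j$ first and then $\partial_x^i$, each time via the Leibniz product rule on a convenient factorization. The building blocks are the elementary formulas
\[
\partial_y^r\left[\frac{1}{1-xy}\right] = \frac{r!\, x^r}{(1-xy)^{r+1}},
\qquad
\partial_x^q\left[\frac{1}{(1-xy)^{r+1}}\right] = \frac{(r+q)!}{r!}\frac{y^q}{(1-xy)^{r+q+1}},
\]
together with the falling-factorial rule $\frac{d^k}{dx^k} x^m = m(m-1)\cdots(m-k+1)\, x^{m-k}$, which is valid for arbitrary $m\in\Z$ and which, when combined with a factor of $\binom{i}{q}/i!$, yields $\frac{1}{q!}\binom{m+r}{i-q}$.

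First I would write $\frac{x^m y^n}{1-xy} = y^n \cdot \frac{x^m}{1-xy}$ and apply Leibniz in $y$:
\[
\frac{1}{j!}\partial_y^j\left[\frac{x^m y^n}{1-xy}\right]
= \sum_{r=0}^{j} \binom{n}{j-r}\, y^{n-j+r}\, \frac{x^{m+r}}{(1-xy)^{r+1}},
\]
where $\binom{n}{j-r}$ comes from combining $\binom{j}{r}/j!$ with the $(j-r)$-th derivative of $y^n$. Next I would rewrite each summand as $x^{m+r}\cdot (1-xy)^{-(r+1)}$ and apply Leibniz in $x$, producing
\[
\frac{1}{i!}\partial_x^i\left[\frac{x^{m+r}}{(1-xy)^{r+1}}\right]
= \sum_{q=0}^{i} \binom{m+r}{i-q}\binom{q+r}{q}\,
\frac{x^{m+r+q-i}\, y^q}{(1-xy)^{q+r+1}},
\]
once again using the falling-factorial derivative of $x^{m+r}$ and the elementary formula above.

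Substituting the second identity into the first and interchanging the order of summation gives exactly the claimed expression, since the $y$-exponent combines as $(n-j+r)+q = n+q+r-j$. The routine part is tracking the combinatorial bookkeeping; the only conceptual point to watch is that $m,n$ may be negative, which is handled uniformly by interpreting the derivatives of $x^m$ and $y^n$ through the falling-factorial convention and the extended binomial coefficients $\binom{m}{k},\binom{n}{k}$ for integer top index. No step should present a genuine obstacle; the main care required is simply to make sure the two Leibniz expansions fit together with the correct indices.
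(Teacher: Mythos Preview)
Your proposal is correct and follows essentially the same approach as the paper: both apply Leibniz first in $y$ to the product $y^n\cdot (1-xy)^{-1}$ (with $x^m$ carried along as a constant in $y$) and then in $x$ to $x^{m+r}\cdot (1-xy)^{-(r+1)}$, using exactly the elementary derivative formulas you list. The only differences are cosmetic in the ordering of the write-up, and your explicit remark about handling negative $m,n$ via extended binomial coefficients is a useful clarification.
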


\begin{proof}
Let $m,n\in \Z$, $i, j \in\N\cup\{0\}$ and $x, y\in \D$. 
Then, we have
\[
\begin{aligned}
\frac{1}{j!} 
\left(\frac{\partial }{\partial y}\right)^{j}
\frac{y^n}{1-xy}
&=
\sum_{r=0}^{j}  
\left\{
\frac{1}{r!}  \left(\frac{\partial }{\partial y}\right)^{r}\frac{1}{1-xy}\right\}
\left\{ \frac{1}{(j-r)!} \left(\frac{\partial }{\partial y}\right)^{j-r}y^{n} \right\}
\\
&=
\sum_{r=0}^{j} 
\binom{n}{j-r} \frac{x^{r}y^{n+r-j}}{(1-xy)^{r+1}},
\end{aligned}
\]
hence
\[
\begin{aligned}
&\frac{1}{i! j!} \left(\frac{\partial }{\partial x}\right)^{i}\left(\frac{\partial }{\partial y}\right)^{j} 
\frac{x^my^n}{1-xy}
\\
&=
\sum_{r=0}^{j} \binom{n}{j-r} y^{n+r-j} \sum_{q=0}^{i}  
\left\{
\frac{1}{q!}  \left(\frac{\partial }{\partial x}\right)^{q}\frac{1}{(1-xy)^{r+1}}\right\}
\left\{ \frac{1}{(i-q)!} \left(\frac{\partial }{\partial x}\right)^{i-q} x^{m+r} \right\}
\\
&=
\sum_{r=0}^{j} \binom{n}{j-r} y^{n+r-j} \sum_{q=0}^{i}  
\left\{
\binom{q+r}{q}  \frac{y^{q}}{(1-xy)^{q+r+1}}\right\}
\left\{ \binom{m+r}{i-q} x^{m+q+r-i} \right\}\\
&= \sum_{q=0}^{i} \sum_{r=0}^{j}  
\binom{n}{j-r} \binom{q+r}{q} \binom{m+r}{i-q} 
  \frac{x^{m+q+r-i} y^{n+q+r-j}}{(1-xy)^{q+r+1}}.
\end{aligned}
\]
Thus, the proposition follows.
\end{proof}

\begin{proposition}\label{prop:sum123}
For $n\in \Z$, $i, j \in\N\cup\{0\}$ and $x, y\in \D$, we have
\[
\sum_{\ell=0}^{\infty} \binom{n+\ell}{i} \binom{j + \ell}{j} x^{n+\ell-i} y^{\ell}
= \sum_{q=0}^{i} \sum_{r=0}^{j}  
\binom{j}{r} \binom{r+q}{q} \binom{n+r}{i-q} 
  \frac{x^{n+r+q-i} y^{r+q}}{(1-xy)^{r+q+1}}.
\]
\end{proposition}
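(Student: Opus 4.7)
The plan is to recognize both sides of the identity as two evaluations of the same iterated partial derivative
\[
D := \frac{1}{i!\,j!}\left(\frac{\partial}{\partial x}\right)^{i}\left(\frac{\partial}{\partial y}\right)^{j}\frac{x^{n}\,y^{j}}{1-xy},
\]
viewed as a function of $(x,y)\in\D\times\D$ with $n\in\Z$ the parameter appearing in the statement. The right-hand side will come for free from Proposition \ref{prop:dif123}, and the left-hand side will come from termwise differentiation of the obvious Taylor-type expansion.

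First, I would read off the right-hand side directly from Proposition \ref{prop:dif123}: substituting the proposition's $m$ by $n$ and its $n$ by $j$, and using $\binom{j}{j-r}=\binom{j}{r}$, yields
\[
D = \sum_{q=0}^{i}\sum_{r=0}^{j}\binom{j}{r}\binom{q+r}{q}\binom{n+r}{i-q}\frac{x^{n+q+r-i}\,y^{q+r}}{(1-xy)^{q+r+1}},
\]
which is precisely the right-hand side of Proposition \ref{prop:sum123}. All the combinatorial bookkeeping has thus already been carried out in the preceding proposition.

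Second, to produce the left-hand side, I would start from the absolutely convergent expansion
\[
\frac{x^{n}y^{j}}{1-xy}=\sum_{\ell=0}^{\infty}x^{n+\ell}\,y^{j+\ell},
\]
differentiate termwise $j$ times in $y$ to obtain $\sum_{\ell\ge 0}\binom{j+\ell}{j}x^{n+\ell}y^{\ell}$, and then differentiate termwise $i$ times in $x$ to obtain
\[
D = \sum_{\ell=0}^{\infty}\binom{n+\ell}{i}\binom{j+\ell}{j}\,x^{n+\ell-i}\,y^{\ell},
\]
which is the left-hand side of Proposition \ref{prop:sum123}. Termwise differentiation is justified by the local uniform convergence of the differentiated series on compact subsets of $\D\times\D$. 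Equating the two expressions for $D$ completes the proof.

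There is no genuinely hard step here; the only bookkeeping subtlety is that when $n\in\Z$ is negative the factor $x^{n}$ is not holomorphic at the origin, but the argument still goes through on $(\D\setminus\{0\})\times\D$ as an identity of Laurent-type expansions. Alternatively, one can regard both sides as polynomial identities in $n$ for each fixed $(\ell,q,r)$ and verify the case $n\ge i$ first, extending to all integer $n$ by polynomiality.
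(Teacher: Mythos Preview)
Your proof is correct and follows essentially the same route as the paper: both compute $\frac{1}{i!j!}(\partial/\partial x)^i(\partial/\partial y)^j\bigl(x^n y^j/(1-xy)\bigr)$ in two ways, once via termwise differentiation of the geometric expansion and once via Proposition~\ref{prop:dif123} with the substitution $(m,n)\mapsto(n,j)$. Your remark about the case of negative $n$ is a reasonable caveat that the paper leaves implicit.
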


\begin{proof}
Let $n\in \Z$, $i, j \in\N\cup\{0\}$ and $x, y\in \D$. 
Since 
$x^ny^j/(1-xy)=\sum_{\ell=0}^{\infty} x^{n+\ell}y^{j+\ell}$, 
we have
\[
\frac{1}{i! j!} \left(\frac{\partial }{\partial x}\right)^{i}\left(\frac{\partial }{\partial y}\right)^{j} 
\frac{x^ny^j}{1-xy}
=\sum_{\ell=0}^{\infty} \binom{n+\ell}{i} \binom{j+\ell}{j} x^{n+\ell-i} y^{\ell}.
\]
On the other hand, by Proposition \ref{prop:dif123}, 
we have
\[
\frac{1}{i! j!} \left(\frac{\partial }{\partial x}\right)^{i}\left(\frac{\partial }{\partial y}\right)^{j} 
\frac{x^ny^j}{1-xy}= \sum_{q=0}^{i} \sum_{r=0}^{j}  
\binom{j}{r} \binom{r+q}{q} \binom{n+r}{i-q} 
  \frac{x^{n+r+q-i} y^{r+q}}{(1-xy)^{r+q+1}}.
\]
Comparing, we obtain the proposition.
\end{proof}

We are ready to prove Lemma \ref{lem:w516}.

\begin{proof}
By (\ref{eq:a363})--(\ref{eq:atilde362}) and Proposition \ref{prop:sum123}, we have, 
for $n\in\Z$, $\mu \in \{1,\dots,K\}$ and $i \in \{1,\dots,m_{\mu}\}$, 
\[
\begin{aligned}
\sum_{\ell=0}^{\infty} p_{\mu,i}(\ell - n) a_{\ell}
&= \sum_{\ell=0}^{m_0} p_{\mu,i}(\ell - n) \rho_{0,\ell}\\
&\qquad 
+\sum_{{\nu}=1}^{K} \sum_{j=1}^{m_{\nu}} 
\left\{
\sum_{\ell=0}^{\infty}
\binom{\ell - n}{i-1} \binom{\ell+j-1}{j-1}  p_{\mu}^{\ell-i+1 - n} \barp_{\nu}^{\ell} 
\right\}
\rho_{\nu,j}\\
&= \sum_{\ell=0}^{m_0} p_{\mu,i}(\ell - n) \rho_{0,\ell}
+ \sum_{{\nu}=1}^{K} \sum_{j=1}^{m_{\nu}} \varphi_n^{{\mu},{\nu}}(i, j) \rho_{\nu,j}
\end{aligned}
\]
and
\[
\begin{aligned}
\sum_{\ell=0}^{\infty} \barp_{\mu,i}(\ell - n) \tilde{a}_{\ell}
&= \sum_{\ell=0}^{m_0} \barp_{\mu,i}(\ell - n) 
\tilde{\rho}_{0,\ell}\\
&\qquad 
+\sum_{{\nu}=1}^{K} \sum_{j=1}^{m_{\nu}} 
\left\{
\sum_{\ell=0}^{\infty}
\binom{\ell - n}{i-1} \binom{\ell+j-1}{j-1}  \barp_{\mu}^{\ell-i+1 - n} p_{\nu}^{\ell} 
\right\}
\tilde{\rho}_{\nu,j}\\
&= \sum_{\ell=0}^{m_0} \barp_{\mu,i}(\ell - n) 
\tilde{\rho}_{0,\ell}
+ \sum_{{\nu}=1}^{K} \sum_{j=1}^{m_{\nu}} \overline{\varphi}_n^{{\mu},{\nu}}(i, j) \tilde{\rho}_{\nu,j}.
\end{aligned}
\]
Thus, the lemma follows.
\end{proof}


\section*{Acknowledgments}
The author is grateful to two referees for their valuable comments and helpful suggestions.

%
%



\end{document}